\newcounter{nootje}
\newtheorem{theorem}{Theorem}[section]
\newtheorem{lemma}[theorem]{Lemma}
\newtheorem{proposition}[theorem]{Proposition}
\newtheorem{corollary}[theorem]{Corollary}
\newtheorem{definition}[theorem]{Definition}
\newtheorem{example}[theorem]{Example}
\newtheorem{notation}[theorem]{Notation}
\newtheorem{rem}[theorem]{Remark}
\numberwithin{equation}{section}
\newcommand{\rk}{\mbox{rank }}
\newcommand{\ra}{\rightarrow}
\newcommand{\Gal}{\mbox{Gal}}
\newcommand{\PP}{ \mathbb{P}}
\newcommand{\Z}{\mathbb{Z}}
\newcommand{\N}{\mathbb{N}}
\newcommand{\Aut}{\mathrm{Aut}}
\def\blfootnote{\xdef\@thefnmark{}\@footnotetext}
\title[Fields of definition of Elliptic fibrations]{Fields of definition of elliptic fibrations on covers of certain extremal rational elliptic surfaces\footnote{ L\MakeLowercase{ast} U\MakeLowercase{pdated:} \today}}
\author{
Victoria Cantoral-Farf\'an
}
\address[Victoria Cantoral-Farf\'an]{
\begin{itemize}
\item[-] KU Leuven, Department of Mathematics\\
Celestijnenlaan 200B, B-3001 Leuven, Belgium
\end{itemize}
} \email[Victoria Cantoral-Farf\'an]{victoria.cantoralfarfan@kuleuven.be}
\author{
Alice Garbagnati
}
\address[Alice Garbagnati]{
\begin{itemize}
\item[-]Universit\`a Statale degli Studi di Milano\, Dipartimento di Matematica\\
via Saldini, 50 I20133 Milano
\end{itemize}
} \email[Alice Garbagnati]{alice.garbagnati@unimi.it}
\author{
Cec\'ilia Salgado
}
\address[Cec\'ilia Salgado]{
\begin{itemize}
\item[-] Universidade Federal do Rio de Janeiro (UFRJ)\, Instituto de Matem\'atica\\ Cidade Universit\'aria, Ilha do Fund\~ao, Rio de Janeiro.
\end{itemize}
} \email[Cec\'ilia Salgado]{salgado@im.ufrj.br}
\author{
Antonela Trbovi\'c 
}
\address[Antonela Trbovi\'c]{
\begin{itemize}
\item[-] University of Zagreb, Department of Mathematics\\ Bijeni\v{c}ka cesta 30, 10000 Zagreb, Croatia
\end{itemize}
} \email[Antonela Trbovi\'c]{antonela.trbovic@math.hr}
\author{
Rosa Winter
}
\address[Rosa Winter]{
\begin{itemize}
\item[-] Universiteit Leiden, Mathematisch Instituut\\
Niels Bohrweg 1, 2333 CA Leiden, The Netherlands
\end{itemize}
} \email[Rosa Winter]{r.l.winter@math.leidenuniv.nl}
\begin{document}
	
\begin{abstract}
We study K3 surfaces over a number field $k$ which are double covers of extremal rational elliptic surfaces. We provide a list of all elliptic fibrations on certain K3 surfaces together with the degree of a field extension over which each genus one fibration is defined and admits a section. We show that the latter depends, in general, on the action of the cover involution on the fibers of the genus 1 fibration.
\end{abstract}

\subjclass[2010]{Primary \textsc{14J26, 14J27, 14J28}}
\keywords{Elliptic fibrations, Extremal rational elliptic surfaces, K3 surfaces, Double covers}

	\maketitle
	\markleft{}

	\section{Introduction}
	
One main distinction of K3 surfaces, among others, is that they form the only class of surfaces that might admit more than one elliptic fibration with section, which is not of product type \cite[Lemma 12.18]{SS}. It is therefore a natural problem to classify such fibrations. This has been done in the past three decades, via different methods by several authors, see for instance \cite{Oguiso}, \cite{Nish}, \cite{Kloos}, \cite{Wine1}, \cite{CG},\cite{GS2} and \cite{Wine2}. Recently, the second and third authors have proposed a new method to classify elliptic fibrations on K3 surfaces which arise as double cover of rational elliptic surfaces. We refer the reader to \cite{GS} and \cite {GS2} for more details.

Let $X$ be a K3 surface obtained as a double cover of an extremal rational elliptic surface defined over a number field $k$. The purpose of this paper is to determine fields of definition of the distinct elliptic fibrations on $X$, i.e., fields over which the classes of the fiber and of at least one section are defined (see Def. \ref{def: fields of def}). We also determine, in some examples, an upper bound for the degree of the field over which the Mordell--Weil group admits a set of generators. Extremal rational elliptic surfaces have been classified by Miranda and Persson in \cite{MiPe}. There are sixteen configurations of singular fibers on such surfaces.
We restrict further our attention to smooth double covers of extremal rational elliptic surfaces with distinct reducible fibers, i.e. such that there are no two reducible fibers of the same Kodaira type. Given a genus 1 fibration on such a K3 surface, we show that it admits a section over a field that depends on the action of the cover involution on its fibers (see Theorem \ref{theo: types and field of definition}). We illustrate this last result for K3 surfaces that arise as a double cover branched over two smooth fibers of the extremal rational elliptic surfaces with one unique reducible fiber and also on smooth double covers of the surface with fiber configurations either $(III^*,I_2)$ or $(III^*,III)$. Remark that among those sixteen configurations of singular fibers on extremal rational elliptic surfaces only four of them have a unique reducible fiber, namely $(I_9, 3I_1)$, $(II^*, II), (II^*,2I_1)$ and $(I_4^*, 2I_1)$. As only the configuration of reducible fibers plays a role in our arguments, we narrow these down to three classes and study those extremal rational elliptic surfaces, denoted by $R_9$, $R_2$, and $R_4$ and the corresponding K3 surfaces $X_9$, $X_2$, and $X_4$, respectively. We denote by $R_3$ an extremal rational elliptic surface with fibers either $(III^*,I_2)$ or $(III^*,III)$ and its generic K3 cover $X_3$. Notice that the surface $X_4$ also occurs as a double cover of $R_3$ and hence, $X_3$ and $X_4$ belong to the same family of K3 surfaces. A reason to explore elliptic fibrations on $X_i$, $i=2,3,4,9$ is that they have different behavior with respect to the cover involution of $X_i\ra R_i$. Fibrations that are preserved by this involution are easier to describe via linear systems of curves on a rational surface, and one can also exhibit a Weierstrass equation for those as pointed out in \cite{Wine2} and \cite{GS2}. In particular, on $X_3$ and $X_4$, which can be identified, we have two different involutions (induced by the covers $X_4\ra R_4$ and $X_3\ra R_3$) and the behavior of each fibration on $X_3\simeq X_4$ with respect to these two involutions can be different. 

This paper is organized as follows. In Section \ref{sec: setting} we introduce the notations which remain in force during the paper and lay down our setting. Section \ref{sec: rational curves on K3 surfaces} is devoted to the study of rational curves on the K3 surface $X$ obtained as a double cover of a rational elliptic surface $R$. More precisely, motivated by the work done in \cite{GS} and \cite{GS2}, we study the behavior of the image by the quotient map~$\pi:X\to R$~of rational curves on $X$ and we determine the rational curves on $X$ coming from a section defined over $k$ of the elliptic fibration $\mathcal{E}_R$. While Section  \ref{sec: rational curves on K3 surfaces} is of geometric nature, Section \ref{sec: extremal RES} is dedicated to study the arithmetic of extremal rational elliptic surfaces defined over $k$. In particular,  we obtain the quite intriguing fact that with a possible unique exception all extremal rational elliptic surfaces can be obtained, over the ground field, as a blow-up of base points of a pencil of genus one curves in $\mathbb{P}^2$ or $\mathbb{P}^1\times\mathbb{P}^1$,  Lemma \ref{models over QQ}. Section \ref{sec: double covers on ext. RES} is dedicated to the study of K3 surfaces coming from double covers of extremal rational elliptic surfaces. We prove in Theorem \ref{theo: types and field of definition} that a genus 1 fibration on $X$ admits a section over a field which depends on the action of the cover involution on the fibers of the genus 1 fibration. Finally, in Sections \ref{sec: surfaces R9 and X9} and \ref{sec: surfaces Ri and Xi} we illustrate the previous result. More precisely, in Section 6 we give a classification of elliptic fibrations on the surface $X_9$ given by a generic double cover of an extremal rational elliptic surface $R_9$ with an $I_9$. We present a fiber class corresponding to each fibration on $X_9$ using sections and components of the reducible fibers of the fibration induced by the elliptic fibration on $R_9$. We also study the Mordell--Weil groups of each fibration and the fields of definition of the fibrations and their Mordell--Weil groups. Section 7 has similar results for the K3 covers of the rational elliptic surfaces $R_2,R_3$ and $R_4$, with reducible fibers $(II^*),(III^*,I_2)$ and $(I_{4}^*)$, respectively.

\subsection{Relation to the literature}
Fields of definition of the Mordell--Weil group of non-isotrivial elliptic surfaces were studied independently by Swinnerton--Dyer in \cite{SD} and Kuwata in \cite{Kuwata} via different methods than the ones presented here. While the first focused on elliptic surfaces fibered over $\mathbb{P}^1$, the latter dealt with basis of arbitrary genus. Nevertheless, both works are concerned with more general elliptic surfaces than the scope of this paper. In Kuwata's work he supposes that each component of the reducible fibers is defined over the ground field $k$. Let $E$ be the generic fiber of an elliptic surface defined over $k$ with base curve $C$. He proves that there is an explicitly computable number $m$ and an explicitly computable extension $L/k$ such that $m E(\bar{k}(C))=mE(L(C))$. Our work differs from Kuwata's in several ways. Firstly, while he focus on one unique elliptic fibration on a surface, we consider one elliptic fibration which we assume is defined over some number field $k$ and use it as a point of start to study the other elliptic fibrations present on the surface. Thus in our work, one elliptic fibration is defined over the ground field, while the others not necessarily. For that reason we are concerned with different fields of definition, namely the one of the elliptic fibration and that of the Mordell--Weil group. Secondly, we focus on an specific class of surfaces, namely K3 surfaces. The further assumption that the K3 is a double cover of an extremal rational elliptic surface guarantees that the fields of definition will be much smaller than those for arbitrary elliptic surfaces. Indeed, fields of definition of the Mordell--Weil group of an elliptic surface can be quite large, for instance in \cite{SD} Swinnerton--Dyer constructed an elliptic surface for which the field of definition of the Mordell--Weil group has degree $2^7.3^4.5$, and the degrees of the fields of definition in Kuwata's work are also much larger than the bounds obtained here. Finally, it is worth to mention that Kuwata's work deal with fields of arbitrary characteristic while we focus on number fields. We expect that our work allows generalizations to that setting and the restriction has been made for the matter of simplicity but also because some of our work builds up on Miranda and Persson's work in \cite{MiPe}, and on two of the author's paper \cite{GS}. Both settings are restricted to characteristic zero.		
	\section{Preliminaries and setting}\label{sec: setting}
	
	Let $R$ be a rational elliptic surface, i.e.~a smooth projective rational surface endowed with a relatively minimal genus one fibration. We assume throughout this article that such a fibration admits a section. We denote by $$\mathcal{E}_R:R\ra\mathbb{P}^1$$ the elliptic fibration on $R$. Let $d:C\ra\mathbb{P}^1$ be a double cover of $\mathbb{P}^1$ branched on $2n$ points $p_i$, $i=1,\ldots, 2n$. Then the fiber product $R\times_{\mathbb{P}^1} C$ is endowed with an elliptic fibration $R\times_{\mathbb{P}^1} C \ra C$, induced by $\mathcal{E}_R$. 
	
	We call the fibers $\mathcal{E}_R^{-1}(p_i)$, $i=1,\ldots, 2n$, the branch fibers.
	If all the branch fibers are smooth, then the fiber product $R\times_{\mathbb{P}^1} C$ is smooth, and we denote it by~$X$. Otherwise, $R\times_{\mathbb{P}^1} C$ is singular and we denote by $X$ its smooth model such that the elliptic fibration $\mathcal{E}_X:X\ra C$, induced by $\mathcal{E}_R$, is relatively minimal.
	
	\begin{center}
		\begin{tikzpicture}
		\matrix (m) [matrix of math nodes,row sep=3em,column sep=4em,minimum width=2em]
		{
			X & R \times_{\PP^1} C & R\\
			& C & \PP^1 \\};
		\path[-stealth]
		(m-1-1) edge [dashed] (m-1-2) 
		(m-1-1) edge node [below] {$\mathcal{E}_X$} (m-2-2)
		(m-1-2) edge (m-1-3)  
		(m-2-2) edge node [above] {$2:1$} (m-2-3) 
		(m-1-2) edge (m-2-2)
		(m-1-3) edge node [right] {$\mathcal{E}_R$} (m-2-3); 
		\end{tikzpicture}
	\end{center}

	Assume that $R$, the fibration $\mathcal{E}_R$ and the zero section $\mathcal{O}$ are all defined over a given number field $k$, which we fix once and for all.  
If the morphism $d$ is defined over $k$ then so is the fiber product, its possible desingularization $X$ and the inherited elliptic fibration $\mathcal{E}_X$. 
	
	The surface $R\times_{\mathbb{P}^1} C$ is naturally endowed with an involution, namely the cover involution of the map $R\times_{\mathbb{P}^1} C \ra R$ induced by the $2:1$ map $d:C\ra \mathbb{P}^1$. It extends to an involution $\tau\in \Aut (X)$ which is the cover involution of the generically $2:1$ cover $X\ra R$. we denote by $\pi$ the quotient map $\pi:X\to X/\tau \simeq_{\text{ bir}} R$.
		
	From now on  we make the following assumptions.
	\begin{itemize}
	\item $d:C\ra \mathbb{P}^1$ is defined over $k$,
	\item $n=1$, i.e. $d:C\ra \mathbb{P}^1$ is branched in two points. Hence $C\simeq \mathbb{P}^1$,
	\item the (two) branch fibers are reduced.
	\end{itemize}
	As a consequence of the previous assumptions we have that $X$ is a K3 surface over~$k$ (see \cite[Example 12.5]{SS}), the involution $\tau$ is non-symplectic, i.e. it does not preserve the symplectic form defined on $X$, since the quotient of a K3 by a symplectic involution is again a K3 surface (see \cite{Nikulin}), and both $\mathcal{E}_X$ and its zero section are defined over $k$. Moreover, if the branch fibers are smooth, the reducible fibers of $\mathcal{E}_X$ occur in pairs that are exchanged by $\tau$. 
	
	\begin{notation}\rm{
	 We denote by $\tau^*$ the involution induced by $\tau$ on $\mathrm{NS}(X)$.}
	\end{notation}

	We recall that, due to their geometry, i.e. trivial canonical class and regularity, K3 surfaces might admit more than one elliptic fibration, all with basis $\mathbb{P}^1$, see for instance \cite[Lemma 12.18]{SS}. Let $X$ be as above, then it admits an elliptic fibration $\mathcal{E}_X$ and at least another elliptic fibration different from $\mathcal{E}_X$ \cite[\S 8.1]{CG} and \cite[Proposition 2.9]{GS2}. One can divide the elliptic fibrations on $X$ in three different classes, depending on the action of $\tau$ on its fibers. In particular, let $\eta$ be an elliptic fibration on $X$ then, by \cite[Section  4.1]{GS}, it is
	\begin{itemize}
		\item of type 1 with respect to $\tau$, if $\tau$ preserves all the fibers of $\eta$;
		\item  of type 2 with respect to $\tau$, if $\tau$ does not preserve all the fibers of $\eta$, but maps a fiber of $\eta$ to another one. In this case $\tau$ is induced by an involution of the basis of $\eta :X\ra\mathbb{P}^1$. It fixes exactly two fibers and $\tau^*$ preserves the class of a fiber of $\eta$;
		\item of type 3, if $\tau$ maps fibers of $\eta$ to fibers of another elliptic fibration. In this case $\tau^*$ does not preserve the class of the generic fiber of $\eta$.
	\end{itemize}

The distinct elliptic fibrations on $X$ are not necessarily defined over $k$. Moreover, different fibrations might be defined over different fields. The aim of this paper is to take a first step into understanding how the action of the involution $\tau$ on the fibers of a given fibration might influence its field of definition. Throughout this paper we adopt the following definition.

\begin{definition}\label{def: fields of def}
Given $X$ as above and an elliptic fibration $\eta$ on $X$, then the smallest field extension of $k$ over which the class of a fiber of $\eta$ is defined and $\eta$ admits a section is called the field of definition of the fibration $\eta$. We denote it by $k_{\eta}$. We denote by $k_{\eta, \mathrm{MW}}$ the smallest field extension of $k_{\eta}$ over which the Mordell--Weil group of $\eta$ admits a set of generators. 
\end{definition} 

\begin{rem}
\rm{
The reader should be aware that in Def. \ref{def: fields of def} our starting data is a K3 surface $X$ constructed as a base change of a rational elliptic surface $R$. Thanks to this construction $X$ inherits an elliptic fibration from $R$ which is defined over a number field $k$. All other fields of definition that appear in this paper are (possibly trivial) field extensions of $k$. In this sense, the field of definition is unique, but when considering $X$ without this preliminary data then the field is no longer necessarily unique. Indeed, one could for instance obtain the same $X$ as a double cover of another rational elliptic surface $R'$ defined over a different field $k'$.}
\end{rem}
	\section{Rational curves on K3 surfaces}\label{sec: rational curves on K3 surfaces}
	
	Let $X$ be a K3 surface as in Section \ref{sec: setting}. In this section we study the behavior of the image by the quotient map $\pi$ of the rational curves on $X$. As in the case of elliptic curves, this behavior depends on the action of the cover involution $\tau$ on the rational curve. 
	
	\begin{lemma}\label{key lemma}
		Let $C$ be a smooth rational curve on $X$ and $D=\pi(C)$ its image on $R$. Denote by $m$ the intersection number $C\cdot \tau(C)$. Then $D$ is of one of the following types.
		\begin{itemize}
			\item[i)] A fiber component of $\mathcal{E}_R$ on $R$.
			\item[ii)] A section of $\mathcal{E}_R$.
			\item[iii)] An $m$-section of $\mathcal{E}_R$, where $m>0$.
		\end{itemize}
		Moreover, if $\pi$ is branched over two smooth fibers of $\mathcal{E}_R$ then $i)$ implies $m=0$.
	\end{lemma}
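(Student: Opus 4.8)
The plan is to determine the type of $D=\pi(C)$ from two things: its position relative to $\mathcal{E}_R$, and the action of $\tau$ on $C$. Write $F_R$, $F_X$ for the class of a fibre of $\mathcal{E}_R$, $\mathcal{E}_X$. Provided $C$ is not contracted by $\pi$ (automatic when the branch fibres are smooth, since then $\pi\colon X\to R$ is finite of degree two), the image $D$ is an irreducible curve, rational because it is dominated by $C\cong\mathbb{P}^1$. On a rational elliptic surface every irreducible curve is either contained in a fibre of $\mathcal{E}_R$, giving a fibre component and hence i); or it surjects onto $\mathbb{P}^1$, giving a multisection of degree $\ell=D\cdot F_R\ge 1$, which is ii) when $\ell=1$ and iii) when $\ell\ge 2$. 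This shows the list is exhaustive, and it remains to compute $\ell$ and to read the cases off $m=C\cdot\tau(C)$.

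Next I would split on whether $C$ is $\tau$-invariant. Let $\tilde B_1,\tilde B_2\subset X$ be the reduced preimages of the two branch fibres; they constitute $\Fix(\tau)$, are smooth of genus one, and are fibres of $\mathcal{E}_X$. If $\tau(C)=C$, then $\tau$ acts on $C\cong\mathbb{P}^1$ as a nontrivial involution---it is not the identity, since a rational curve cannot lie in the genus one curves $\tilde B_i$---hence with exactly two fixed points, which are the points of $C\cap\Fix(\tau)$ and are met transversally. Therefore $C\cdot(\tilde B_1+\tilde B_2)=2$, while $\tilde B_i$ being a fibre gives $C\cdot(\tilde B_1+\tilde B_2)=2(C\cdot F_X)$; hence $C\cdot F_X=1$, so $\pi|_C$ is two-to-one onto a section and we are in ii), with $m=C^{2}=-2$. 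If instead $\tau(C)\ne C$, then $\pi|_C$ is birational, $\pi^{*}D=C+\tau(C)$, and, as a general fibre of $\mathcal{E}_R$ is unramified and so pulls back to a sum of two fibres of $\mathcal{E}_X$, the projection formula gives $\ell=2(C\cdot F_X)$. In particular a section forces $\tau(C)=C$, and a multisection of degree $\ge 2$ forces $\tau(C)\ne C$.

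The step I expect to be the main obstacle is identifying the degree in iii) with $m$ itself. Here I would combine $(\pi^{*}D)^{2}=2D^{2}$ with $\pi^{*}D=C+\tau(C)$ to obtain $D^{2}=m-2$, and then use adjunction on $R$, where $K_R=-F_R$, to compute $p_a(D)=\tfrac12(m-\ell)$. As $D$ is rational, $p_a(D)$ equals its nonnegative $\delta$-invariant, so $m=\ell+2\delta\ge\ell\ge 2$; thus $m>0$, and $D$ is literally an $m$-section precisely when it is smooth, i.e. when $C\cap\tau(C)\subseteq\Fix(\tau)$. Depending on what the later sections require, I would either verify that the curves $C$ occurring there have smooth image, or simply retain the identity $m=\ell+2\delta$ together with the bound $m>0$.

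Finally, for the ``Moreover'' assume the branch fibres are smooth and that $D$ is a fibre component, as in i). A $\tau$-invariant fibre of $\mathcal{E}_X$ lies over a ramification point of $d$ and so is one of the $\tilde B_i$, which has no rational component; hence the vertical rational curve $C$ satisfies $\tau(C)\ne C$. The fibre of $\mathcal{E}_R$ containing $D$ is singular, since a smooth fibre is an irreducible genus one curve; being singular it is not one of the (smooth) branch fibres, and as fibres are disjoint $\pi$ is unramified over $D$. Then $\pi^{-1}(D)\to D$ is an \'etale double cover of the simply connected curve $D\cong\mathbb{P}^1$, so it splits: $\pi^{-1}(D)=C\sqcup\tau(C)$ with $C$ and $\tau(C)$ disjoint. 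Hence $m=C\cdot\tau(C)=0$, as claimed.
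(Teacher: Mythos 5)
Your overall strategy --- splitting on whether $\tau(C)=C$ and computing the fibre degree of $D$ by intersection theory --- is the same as the paper's, and in the case $\tau(C)\neq C$ you are in fact more careful than the authors: the paper deduces $D\cdot(-K_R)=m$ from adjunction, which tacitly assumes $p_a(D)=0$, whereas you correctly record the correction $\ell=m-2\delta$ coming from possible singularities of $D$ at the images of non-fixed points of $C\cap\tau(C)$. That is a genuine refinement of statement iii). Your proof of the ``Moreover'' is also sound in substance, though the appeal to simple connectivity of $D$ presupposes that $D$ is smooth, which is part of what is being proved; the cleaner route (and the one implicit in the paper) is that a non-branch fibre of $\mathcal{E}_R$ pulls back to two \emph{disjoint} fibres of $\mathcal{E}_X$, so $\pi^{-1}(D)$ splits for free.

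There is, however, a genuine gap in your treatment of the $\tau$-invariant case. You rule out the possibility that $\tau$ acts as the identity on $C$ by asserting that $\Fix(\tau)=\tilde B_1\cup\tilde B_2$ consists of two smooth genus-one curves. That holds only when the two branch fibres are smooth, and smoothness of the branch fibres is \emph{not} a standing hypothesis of the lemma: the setting of Section~\ref{sec: setting} requires the branch fibres merely to be reduced, and the ``Moreover'' clause explicitly adds the smoothness hypothesis --- which would be vacuous if it were already in force. When a branch fibre is singular (say of type $I_2$ or $III$), $X$ is a resolution of the fibre product and $\Fix(\tau)$ contains smooth rational curves; for such a curve $C$ one has $\tau|_C=\mathrm{id}$, $\pi|_C$ is birational, and $D=\pi(C)$ is a component of the branch fibre, i.e.\ case i) with $m=C\cdot\tau(C)=C^2=-2$. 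This is precisely why the ``Moreover'' needs its extra hypothesis to force $m=0$. The paper's proof treats this sub-case explicitly (``the involution can either act as the identity on $C$ or as an involution of $C$''), whereas your argument, as written, establishes the trichotomy only under the additional assumption that both branch fibres are smooth. The repair is short --- add the identity sub-case, note that then $C$ lies in the ramification locus, hence $D$ lies in a branch fibre and is a fibre component --- but it is needed for the lemma as stated.
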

	\begin{proof}
		Let $C$ be a smooth rational curve on $X$ and $D=\pi(C)$. By the adjunction formula we have that $C^2=-2$. We consider the following cases $\tau(C)=C$ and $\tau(C)\neq C$. 
		\begin{enumerate}
			\item $\tau(C)=C$. In this case, the involution can either act as the identity on $C$ or as an involution of $C$. If the former holds then $D$ is a $(-2)$-curve on $R$ and therefore it is a component of a fiber of $\mathcal{E}_R$. If $\tau$ acts as an involution on $C$ then since $\pi_*(C)=2D$, we have that $2D^2=C^2=-2$. Hence $D^2=-1$, and in particular $D$ is a section of $\mathcal{E}_R$. 
			\item $\tau(C)=C'\neq C$. Then $m\geq 0$, then $\pi^*(D)^2=2D^2=(C+C')^2=-4+2m$. Hence $D^2=m-2$. By the adjunction formula we have that $D(-K_R)=m$. To conclude it is enough to recall that the class of a fiber of the elliptic fibration on $R$ is given by $-K_R$. Thus, $D$ is an $m$-section of $\mathcal{E}_R$ if $m>0$, or a fiber component of $\mathcal{E}_R$ if $m=0$.
		\end{enumerate}
		Moreover, if $\pi$ is branched over two different smooth fibers,  $\tau(C)=C$ implies that $\tau$ is an involution of $C$, and thus $D$ is a section of the elliptic fibration $\mathcal{E}_R$. Hence if $D$ is a component of a fiber one must have $\tau(C)\neq C$, i.e., case (2) with $m=0$. 
	\end{proof}
	
The next lemma deals with rational curves on $X$ that come from sections defined over $k$ of the elliptic fibration $\mathcal{E}_R$. As sections do not split on the double cover we show that their inverse image is a irreducible curve defined over $k$. 

	\begin{lemma}\label{lemma: sections on R and induced curves on X} Let $P_R$ be a section of $\mathcal{E}_R:R\ra\mathbb{P}^1$ that is defined over $k$, then $P_X:=\pi^{-1}(P_R)$ is an irreducible smooth rational curve of $X$ and $\tau(P_X)=P_X$. In particular $P_X$ is defined over $k$.\end{lemma}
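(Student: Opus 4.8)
The plan is to restrict the double cover $\pi$ to the curve $P_X=\pi^{-1}(P_R)$ and to identify $\pi|_{P_X}\colon P_X\to P_R$ as a double cover of $\PP^1$ ramified over exactly two points. First I would recall the standard facts about sections of a relatively minimal rational elliptic surface: $P_R$ is a smooth rational curve, isomorphic to the base $\PP^1$ via $\mathcal{E}_R|_{P_R}$, and it meets every fiber of $\mathcal{E}_R$ transversally in a single point, which lies in the smooth locus of that fiber. Since the construction of $X$, the map $\pi$, and the section $P_R$ are all defined over $k$, the scheme $P_X=\pi^{-1}(P_R)$ is automatically defined over $k$; it then remains to prove that $P_X$ is a geometrically irreducible smooth rational curve and that $\tau(P_X)=P_X$.

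The central point, which makes precise the slogan that \emph{sections do not split}, is the computation of the intersection of $P_R$ with the branch locus of $\pi$. By construction the cover $\pi\colon X\to R$ is ramified exactly along the two branch fibers $F_1=\mathcal{E}_R^{-1}(p_1)$ and $F_2=\mathcal{E}_R^{-1}(p_2)$, whose sum lies in $|-2K_R|$ because the fiber class of $\mathcal{E}_R$ equals $-K_R$. As $P_R$ is a section we have $P_R\cdot F_i=1$ for $i=1,2$, and each intersection point is a smooth point of the corresponding branch fiber lying over the distinct base points $p_1\neq p_2$; in the case of singular branch fibers these two points avoid the loci over which $R\times_{\PP^1}C$ acquires singularities, so the desingularization leaving $X$ does not alter the picture along $P_X$. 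Hence $\pi|_{P_X}\colon P_X\to P_R\cong\PP^1$ is a double cover whose branch divisor consists of exactly the two distinct reduced points $P_R\cap F_1$ and $P_R\cap F_2$. Since this branch divisor is nonzero and reduced, it cannot be twice an effective divisor, so the cover is connected; being ramified over reduced points, its total space is smooth. Therefore $P_X$ is a smooth irreducible curve, and Riemann--Hurwitz gives $2g(P_X)-2=2(-2)+2=-2$, so $P_X$ is rational, i.e.\ $P_X\cong\PP^1$.

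Finally, the $\tau$-invariance is immediate: since $\tau$ is the cover involution of $\pi$ we have $\pi\circ\tau=\pi$, whence $\tau(P_X)=\tau(\pi^{-1}(P_R))=\pi^{-1}(P_R)=P_X$, with $\tau|_{P_X}$ acting as the nontrivial deck involution of the double cover; this is precisely case (1) of Lemma \ref{key lemma} with $\pi(P_X)=P_R$ a section. Combined with the fact that $P_X$ is geometrically irreducible and defined over $k$ as a $k$-scheme, this yields the statement. I expect the only genuine obstacle to be the irreducibility claim, i.e.\ that the section does not split on the cover; once the branch count on $P_R$ is pinned down to be exactly two reduced points, smoothness and rationality follow formally. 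The one technical subtlety worth checking carefully is that, when the branch fibers are singular, the points $P_R\cap F_i$ indeed lie away from the singularities of $R\times_{\PP^1}C$, so that no extra care is needed in passing to the smooth model $X$.
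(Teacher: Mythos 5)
Your proposal is correct, and it reaches the conclusion by a genuinely different argument at the one step that matters, namely ruling out the possibility that $\pi^{-1}(P_R)$ splits into two components. Both you and the paper begin identically: $P_R$ meets the branch locus $F_1+F_2$ in exactly two points lying over $p_1\neq p_2$, so $P_X\to P_R\cong\mathbb{P}^1$ is a degree-two cover with two branch points, and the only alternatives are an irreducible smooth rational curve or a split cover. The paper excludes the split case by a global intersection computation in $\mathrm{NS}(X)$: writing $\pi^*(P_R)=P_1+P_2$ and $\pi^*(F_R)=F_1+F_2$ with $F_1\sim F_2$, the identity $2=(F_1+F_2)(P_1+P_2)=2F_1P_1+2F_1P_2$ would force one of the $P_i$ to be a fiber component, which is absurd since $P_1$ and $P_2$ meet in two points on distinct fibers. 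You instead argue locally along $P_R$ via the cyclic-cover description: the restricted branch divisor $(F_1+F_2)\cdot P_R$ is a reduced divisor of degree two (transversal intersections at smooth points of the branch fibers), hence the defining section of $L^2|_{P_R}$ is not a square, the restricted cover does not split, and reducedness of the branch points gives smoothness, with Riemann--Hurwitz then yielding $g=0$. Your version is more self-contained and makes the smoothness of $P_X$ explicit rather than implicit, while the paper's version stays entirely inside the N\'eron--Severi lattice and avoids invoking the algebra structure of the double cover; both handle the passage to the relatively minimal model $X$ the same way, by observing that the desingularization happens away from $P_X$ (you justify this slightly more carefully by noting that $P_R$ meets each branch fiber in a smooth point of that fiber). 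The $\tau$-invariance and $k$-rationality statements are handled equivalently in both.
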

	\proof If $P_R$ is a section of an elliptic fibration on a rational surface then it meets the branch locus of $R \times_{\mathbb{P}^1} \mathbb{P}^1\ra R$, which is given by two fibers, in two points. Thus its inverse image is a $2:1$ cover of a rational curve branched in two points, i.e. either an irreducible smooth rational curve or the union of two smooth rational curves meeting in two points. If the inverse image of $P_R$ is the union of two curves, say $P_1$ and $P_2$, we have $\pi^*(P_R)=P_1+P_2$. Since the inverse image of a fiber $F_R$, which is not a branch fiber, consists of two disjoint fibers, we have $\pi^*(F_R)=(F_1+F_2)$. But then we would have $\pi^*(F_R)\pi^*(P_R)=2=(F_1+F_2)(P_1+P_2)=2(F_1P_1)+2(F_1P_2)$, where we used that $F_1$ and $F_2$ are linearly equivalent, since they are fibers of the same fibration on $X$. This would implies that either $P_1$ or $P_2$ is a component of a fiber, which is not possible, because they intersect in two points which lie in two different fibers, namely the ramification fibers. We conclude that $\pi^{-1}(P_R)$ is a smooth rational curve.	
	Even if one has to blow up some points to obtain $X$ from $R\times_{\mathbb{P}^1} \mathbb{P}^1$, the strict transform of the inverse image of $P_R$, which we denote by $P_X$, remains irreducible and thus $\tau(P_X)=P_X$. Since the double cover map $d$ is assumed to be defined over $k$ and so are the points that one has to possibly blow up, we have that $P_X$ is also defined over $k$.
	\endproof
	
\section{Extremal rational elliptic surfaces}\label{sec: extremal RES}

In what follows we analyze the arithmetic of extremal rational elliptic surfaces defined over $k$. Let us recall that an extremal rational elliptic surface has Mordell--Weil rank equal to $0$, and thus only finitely many sections, i.e. $(-1)$-curves.
		
\begin{lemma}\label{lemma: quadratic extension}
Let $R$ be an extremal rational elliptic surface defined over $k$. Assume that all reducible fibers of the elliptic fibration are distinct. Then the N\'{e}ron--Severi group $\mathrm{NS}(R)$ admits generators defined over a field extension of $k$ of degree at most 2.
\end{lemma}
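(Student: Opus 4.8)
The plan is to understand $\mathrm{NS}(R)$ explicitly via the theory of extremal rational elliptic surfaces and then control the field of definition of its generators. Recall that any rational elliptic surface has $\mathrm{NS}(R)\cong H^2(R,\mathbb{Z})$ of rank $10$, and since $R$ is extremal the Mordell--Weil rank is $0$. By the Shioda--Tate formula, the rank of the sublattice generated by the zero section $\mathcal{O}$, a general fiber $F=-K_R$, and the components of the reducible fibers not meeting $\mathcal{O}$ is then full, i.e.\ equal to $10$. Concretely, the classes $\mathcal{O}$, $F$, and the non-identity fiber components of the reducible fibers generate a finite-index sublattice of $\mathrm{NS}(R)$, and the index is related to the torsion of the Mordell--Weil group. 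So the first step is to exhibit these classes as generators (up to finite index, which I will need to address).

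First I would fix the arithmetic data defined over $k$. By hypothesis $R$, $\mathcal{E}_R$ and $\mathcal{O}$ are defined over $k$, so the classes $[\mathcal{O}]$ and $[F]=-K_R$ lie in $\mathrm{NS}(R)$ as Galois-invariant, hence $k$-defined, classes. The key point is the reducible fibers: each reducible fiber of $\mathcal{E}_R$ is defined over $\bar{k}$, and since the fibers are \emph{distinct} (no two reducible fibers share a Kodaira type), the set of reducible fibers is Galois-stable fiber-by-fiber --- the absolute Galois group cannot permute two fibers of different Kodaira types, so each reducible fiber is individually defined over $k$ (its image point in $\mathbb{P}^1$ is $k$-rational, being the unique point of its type). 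This is where the distinctness assumption does its essential work.

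The remaining issue is the internal components of each reducible fiber. For a fiber of type $I_n$, $I_n^*$, etc., the Galois group acts on the dual graph (the configuration of $(-2)$-curves) by an automorphism of the corresponding affine Dynkin diagram. The main obstacle, and the crux of the degree-$2$ bound, is to show that this action is defined over at most a quadratic extension. I expect this to reduce to the observation that the only nontrivial graph automorphisms that can be Galois-realized here have order $2$: the Galois action must fix the identity component (the one meeting $\mathcal{O}$, which is $k$-defined) and must preserve the intersection pattern, so it can at most act by the order-$2$ symmetry of the chain or diagram, e.g.\ reflecting an $I_n$ cycle or swapping the two end-nodes of a $D$- or $A$-type tail. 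One should check, type by type across the relevant Kodaira fibers, that no order-$3$ or higher symmetry (such as the triality of $\tilde{D}_4$, i.e.\ $I_0^*$) is forced; the configurations in Miranda--Persson with distinct reducible fibers avoid the cases where higher-order component permutations would be needed, or those components are individually rational.

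To finish, I would argue as follows: after adjoining a single quadratic element to $k$ one trivializes all these order-$2$ component swaps simultaneously (since a product of order-$2$ Galois actions is governed by a single character when they are all realized by the same quadratic twist, or else one takes the compositum, which one must verify remains quadratic --- here the distinctness and the extremal classification keep the number of genuinely swapped pairs controlled). Over this quadratic extension $k'$ every fiber component is $k'$-rational, so together with $[\mathcal{O}]$ and $[F]$ we obtain $k'$-defined generators of a finite-index sublattice $L\subseteq\mathrm{NS}(R)$. Finally, I must promote ``finite-index sublattice'' to ``all of $\mathrm{NS}(R)$'': the extra generators accounting for the index come from torsion sections of $\mathcal{E}_R$, and one checks (again using the explicit Miranda--Persson list, or Lemma \ref{lemma: sections on R and induced curves on X} type arguments about sections) that these torsion sections are themselves defined over $k'$. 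The hardest part is precisely this interplay --- ensuring that \emph{both} the component swaps \emph{and} the torsion contributions are simultaneously defined over a \emph{single} quadratic extension rather than forcing a larger compositum; the distinctness hypothesis and the rigidity of extremal configurations are what make the bound sharp at degree $2$.
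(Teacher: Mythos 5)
Your setup matches the paper's: Shioda--Tate, the Galois-invariance of $\mathcal{O}$ and $F$, and the observation that distinctness of the Kodaira types forces each reducible fiber to be individually Galois-stable, so that the Galois action on the components of a single fiber factors through an order-$\leq 2$ symmetry of the dual graph fixing the identity component. But there is a genuine gap exactly where you flag ``the hardest part'': you never actually show that the several order-$2$ actions (one for each reducible fiber with at least three components, plus the action on the torsion sections) are cut out by the \emph{same} quadratic extension. A priori your argument only bounds the field of definition by the compositum of one quadratic extension per large fiber plus whatever is needed for the sections; configurations such as $(I_6,I_3,I_2)$, $(IV^*,I_3,I_1)$ or $(I_1^*,I_4,I_1)$ do have two reducible fibers with at least three components each, so this compositum could in principle be biquadratic. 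Saying that ``the distinctness and the extremal classification keep the number of genuinely swapped pairs controlled'' is not an argument, and the hypothesis really is doing work here: the paper's example with configuration $(4I_3)$ has its N\'eron--Severi group defined only over a biquadratic field.

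The paper closes this gap with two concrete geometric inputs absent from your sketch. First, for the sections: let $F_{v_1}$ be the reducible fiber with the most components and $k_R$ the (at most quadratic) extension splitting its components. Each section is a $(-1)$-curve meeting exactly one component of $F_{v_1}$ transversally; the intersection point is therefore a $k_R$-point of the section, and a rational curve with a $k_R$-point is defined over $k_R$. Second, for the components of the other large fiber $F_{v_2}$: one contracts, over $k_R$, the sections and suitable components of $F_{v_1}$ down to $\mathbb{P}^2$ or $\mathbb{P}^1\times\mathbb{P}^1$; the components of $F_{v_2}$ then become rational curves passing through the $k_R$-points that are the images of the contracted curves, hence they too carry $k_R$-points and are defined over $k_R$. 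These two steps are what pin everything down to a single quadratic extension; without them (or a substitute) your argument does not establish the degree-$2$ bound.
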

\begin{proof}
There are two main ingredients in the proof of the statement. The first one is the Shioda--Tate formula which tells us that 
\[
\mathrm{NS}(R)/ T \simeq \mathrm{MW}(\mathcal{E}_R),
\]
where $T= \langle \mathcal{O}, F \rangle \oplus \sum_{\substack{v \in \text{reducible fibers}\\ i \in S_v}} \Theta_{v,i}$, with $\Theta_{v,i}$ denoting  the components of the reducible fiber $\mathcal{E}_R^{-1}(v)$, $S_v=\{1,\cdots, n_v-1\}$ and, since the surface is extremal, $\mathrm{MW}(\mathcal{E}_R)$ is a finite group. The second is the fact that the absolute Galois group $G_{\bar{k}}$ acts on $\mathrm{NS}(R)$ preserving the intersection pairing. 

 Recall that both the zero section $\mathcal{O}$ and the class of a smooth fiber $F$ are defined over $k$. A reducible fiber with exactly two components has each component defined over $k$ since the component that intersects the zero section is preserved. Thus in what follows we can focus on reducible fibers with at least three components. By the hypothesis on the reducible fibers being distinct, there are at most two such fibers, say $F_{v_1}$ and $F_{v_2}$, see the table in \cite[Thm. 4.1]{MiPe}. Assume w.l.o.g that $F_{v_1}$ is the fiber with more reducible components. Each reducible fiber is globally defined over $k$ because, by assumption, it is unique. Hence its trivial component is also defined over $k$. Since the latter intersects at most two other components, these are $G_{\bar{k}}$-conjugate and as a pair they form a $G_{\bar{k}}$-orbit. The same happens to all other components that are not defined over $k$. Let $k_R/k$ be the quadratic extension over which the fiber components of $F_{v_1}$ are defined. We show that each section is defined over $k_R$. The Mordell--Weil group is globally defined over $k$ since its elements are precisely the $(-1)$-curves in the N\'eron--Severi group. Moreover because each section $C$ intersects transversally a unique fiber component of $F_{v_1}$, the point of intersection is mapped by any element in $G_{\bar{k}}$ to another point of intersection of a component of $F_{v_1}$ and a section. Since a component of a fiber is mapped by $G_{\bar{k}}$ either to itself or to a unique other fiber component defined over $k_R$, the intersection point is also defined over $k_R$. Thus $C$ is a rational curve with a $k_{R}$-point and hence it is  also defined over $k_{R}$. It remains to show that the components of $F_{v_2}$ are defined over $k_{R}$. This follows from the fact that after contracting the sections and certain fiber components of $F_{v_1}$ we reach either $\mathbb{P}^2$ or $\mathbb{P}^1\times \mathbb{P}^1$. The components of $F_{v_2}$ are thus rational curves with $k_{R}$-points that correspond to the contracted curves, and hence are defined over $k_{R}$ as well.
\end{proof}

\begin{example}
The extremal rational elliptic surface with Weierstrass equation
\[
y^2=x^3  - 3(t^2- 3)(t-2)^2 x+ t(2t^2-9)(t-2)^3
\]
has reducible fibers of types $I_1^*$ and $I_4$. Its Mordell--Weil group is $\mathbb{Z}/4\mathbb{Z}$ with two sections defined over $\mathbb{Q}$, namely $[0, 1, 0]$ and $ [t^2-2t, 0,1]$,
 and two conjugate sections, namely $[(t - 3)(t-2), \pm 3 \sqrt{3}(t-2)^2,1]$, which are defined over a quadratic extension. The reader can find this example as $X_{141}$ in \cite[Table 5.2]{MiPe}.
 \end{example}
The next example shows that the hypothesis on the distinct reducible fibers is indispensable in Lemma \ref{lemma: quadratic extension}.
 \begin{example}
The extremal rational elliptic surface with Weierstrass equation
 \[
 y^2= x^3 + (3t^4+24t )x+ 2t^6+40t^3-16
 \]
 has four reducible fibers of type $I_3$. Its Mordell--Weil group is defined over a biquadratic extension $\mathbb{Q}(i, \sqrt{3})$. This corresponds to the surface $X_{3333}$ in \cite[Table 5.3]{MiPe}. See also Remark \ref{RmkNS}, iii).
 
 \end{example}

\begin{notation}\label{not: compositum}\rm {In what follows, we keep the notation introduced in Lemma \ref{lemma: quadratic extension} and denote by $k_R$ the extension of $k$ over which the N\'eron--Severi group $\mathrm{NS}(R)$ admits a set of generators given by fiber components and sections of the elliptic fibration on $R$, and by $G_R$ the Galois group $\Gal(k_R/k)$. We keep the subscript $R$ for the Galois group to reinforce the dependence on the surface. By Lemma \ref{lemma: quadratic extension}, if the Kodaira types of the reducible fibers of $\mathcal{E}_R$ are different  then $k_R/k$ has degree at most 2.}
\end{notation}

\begin{rem}\label{RmkNS}\rm{
\indent\par
\begin{itemize}
\item[i)] Certain configurations of reducible fibers force the Galois group $G_R$ to be trivial. Thus such surfaces always admit a set of generators for their N\'eron--Severi group over the ground field $k$. This holds for instance for any rational elliptic surface over $k$ which has reducible fiber configurations $(II^*)$, $(III^*,I_2)$, $(III^*,III)$ or $(I_4^*)$; see the proof of Lemma~\ref{models over QQ} or \cite[Cor. 4.4]{Sal16}.
\item[ii)] Five out of sixteen configurations of reducible fibers on extremal rational elliptic surfaces, namely $(2I^*_0), (2I_5), (2I_4,2I_2), (I_2^*, 2I_2)$ and $(4I_3)$ do not satisfy the hypothesis of Lemma \ref{lemma: quadratic extension}, see \cite[Theorem 4.1]{MiPe}. 
\item[iii)] Extremal rational elliptic surfaces with repeated reducible fibers have their N\'eron--Severi group defined, in general, over extensions of larger degree. For instance, a rational elliptic surface with reducible fiber configuration $(2I_5)$ has, in general, its N\'eron--Severi group defined over an extension of degree four, with cyclic Galois group (see the proof of Lemma \ref{models over QQ}), while a surface $R_0$ with $(2I_0^*)$ has, in general, $\mathrm{NS}(R_0)$ defined over an extension of the ground field with Galois group given by the dihedral group of order 12. Indeed, the Galois group is generated by an involution which preserves each section and switches the two $I_0^*$-fibers and by  $\mathfrak{S}_3$ which preserves the fibers, and permutes the non-trivial elements of $\mathrm{MW}(R_0)=(\Z/2\Z)^2$.
\end{itemize}}
\end{rem}

\vspace{5pt}

\subsection{Minimal models for extremal RES over \texorpdfstring{$k$}{k}}\hspace{0pt}

\vspace{5pt}

We recall that every rational elliptic surface defined over and algebraically closed field of characteristic zero can be obtained as the blow-up of the base points of a pencil of generically smooth cubics, \cite[\S 5.6.1]{CoDol} or \cite[Lemma IV.1.2.]{Mi}. This fact clearly does not hold, in general, over a number field $k$. For instance, the blow-up of the base point of the anti-canonical linear system of a $k$-minimal del Pezzo surface of degree one is a rational elliptic surface defined over $k$ which does not admit a blow down to $\mathbb{P}^2$ as it is clearly not even $k$-rational. On the other hand, if one restricts our attention to extremal rational elliptic surfaces then one can show that they are always $k$-rational, with possible exception given by those with reducible fiber configuration $(2I_0^*)$\footnote{These can be $k$-birational to a $k$-minimal Ch\^atelet surface depending on whether the elliptic fibration has a 2-torsion section over $k$ or not.}. Still this is not enough to assure that they can be obtained as a blow-up of the projective plane. Indeed, we provide an example in Proposition \ref{prop: Galois of R9} for which this does not hold. Nonetheless, we obtain a quite intriguing fact, namely that with a possible exception of surfaces with configuration $(2I_0^*)$, all extremal rational elliptic surfaces can be obtained, over the ground field, as a blow-up of base points of a pencil of genus one curves in $\mathbb{P}^2$ or $\mathbb{P}^1\times\mathbb{P}^1$, in Lemma \ref{models over QQ}. Despite its simple proof, this intriguing fact is not in the literature and likely not known to many experts. 

Since an extremal rational elliptic surface has finite Mordell--Weil group, it has only finitely many curves of negative self-intersection \cite[Proposition VIII.1.2]{Mi}. The Galois group $G_R$ acts on $\mathrm{NS}(R)$ preserving the intersection pairing. Since, by hypothesis, the zero section of the fibration $\mathcal{E}_R$ is defined over $k$ it is always preserved by $G_R$.

From now on we will use the following notation for the irreducible components of a reducible fiber: the component which intersects the zero section will be denoted by $C_0$; in a fiber of type $I_n$, the components $C_i$, $i\in\Z/n\Z$ are numbered requiring that $C_iC_{j}=1$ if and only if $|i-j|=1$.
\begin{lemma}\label{models over QQ}
Let $R$ be an extremal rational elliptic surface defined over $k$ with at most one non-reduced fiber. Then $R$ is $k$-isomorphic to the blow-up of the base points of a pencil of cubic curves in $\mathbb{P}^2$ or a pencil of curves of bidegree $(2,2)$ in $\mathbb{P}^1\times \mathbb{P}^1$. In particular, such surfaces are always $k$-rational.
\end{lemma}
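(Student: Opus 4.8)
The plan is to prove the statement by producing an explicit sequence of Galois-equivariant contractions of $(-1)$-curves that terminates at a minimal rational surface defined over $k$, and then to argue that this minimal surface is either $\mathbb{P}^2$ or $\mathbb{P}^1\times\mathbb{P}^1$. The starting point is that an extremal rational elliptic surface has $\rho(R)=10$, so after contracting nine disjoint $(-1)$-curves one reaches a surface with Picard number one or two; the whole difficulty is to carry out these contractions over $k$ (i.e.\ $G_R$-equivariantly) rather than merely over $\bar k$. By Lemma \ref{lemma: quadratic extension} and Notation \ref{not: compositum}, every section and every fiber component is defined over $k_R$, where $[k_R:k]\le 2$, and I would check case by case through the Miranda--Persson list \cite[Thm. 4.1]{MiPe} of the sixteen configurations that, excluding $(2I_0^*)$, one can always select the curves to be contracted in $G_R$-stable packets.

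The core of the argument is organized around the Shioda--Tate decomposition $T=\langle \mathcal{O},F\rangle\oplus\sum_{v,i}\Theta_{v,i}$. First I would contract the zero section $\mathcal{O}$, which is defined over $k$ by hypothesis. After this contraction, the fiber components that met $\mathcal{O}$ become $(-1)$-curves, and one continues contracting along each reducible fiber, peeling off the chain or tree of rational curves $\Theta_{v,i}$. The key bookkeeping point, already established in the proof of Lemma \ref{lemma: quadratic extension}, is that the component $C_0$ meeting the zero section is defined over $k$, and the remaining components of a unique reducible fiber are either individually defined over $k$ or occur in conjugate pairs over $k_R$; contracting a conjugate pair together is a $G_R$-equivariant operation and lowers the Picard number by two at once. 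Iterating, each reducible fiber contributes its components in $G_R$-orbits, and the finitely many torsion sections (the $(-1)$-curves generating $\mathrm{MW}(\mathcal{E}_R)$, defined over $k_R$ by Lemma \ref{lemma: quadratic extension}) can likewise be contracted in orbits. I would track the total: $1$ (for $\mathcal{O}$) plus $\sum_v (n_v-1)$ plus the rank contribution from torsion sections must account for the drop from $10$ down to the Picard number of the terminal surface.

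Once all available disjoint negative curves are contracted $G_R$-equivariantly, the resulting smooth projective rational surface $S$ over $k$ has no $G_R$-stable configuration of $(-1)$-curves left to contract, so it is a $k$-minimal (equivalently, $G_R$-minimal) rational surface. By the classical classification of minimal rational surfaces over a perfect field (del Pezzo surfaces and conic bundles, as in the Enriques--Manin--Iskovskikh theory), together with the fact that $S$ has very small Picard number after the contractions, the only possibilities that arise here are $\mathbb{P}^2$ (Picard number one) or a minimal surface with Picard number two containing a pencil of conics, which in the relevant cases is $\mathbb{P}^1\times\mathbb{P}^1$ or a Hirzebruch surface $\mathbb{F}_n$; the genus one pencil pushes forward to the claimed pencil of cubics or of curves of bidegree $(2,2)$. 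The excluded configuration $(2I_0^*)$ is precisely the one where the two $I_0^*$ fibers are swapped by $G_R$ in a way that obstructs reaching $\mathbb{P}^2$ and can instead force a Ch\^atelet-type minimal model, as noted in the footnote; the hypothesis of at most one non-reduced fiber rules this out.

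The main obstacle I anticipate is the equivariance of the contractions: contracting over $\bar k$ is automatic, but I must guarantee at each stage that the next curve (or conjugate pair) to be blown down is $G_R$-stable and remains a genuine $(-1)$-curve after the earlier contractions, which requires the distinctness/reducedness hypotheses to control how $G_R$ permutes fiber components. The cleanest way to discharge this is the case analysis over the list in \cite[Thm. 4.1]{MiPe}, invoking that for distinct reducible fibers $G_R$ has order at most two (Notation \ref{not: compositum}) so conjugate pairs are the worst that can occur; for the repeated-fiber configurations one verifies directly (as in Remark \ref{RmkNS}) that suitable $G_R$-orbits of contractible curves still exist, with $(2I_0^*)$ being the sole exception where the dihedral action prevents a $k$-rational blow-down to $\mathbb{P}^2$.
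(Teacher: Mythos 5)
Your overall strategy --- peel off sections and fiber components in $G_R$-stable packets and land on a minimal rational surface --- is the same as the paper's, and your structural observations (the zero section and $C_0$ are $G_R$-fixed, components come in $G_R$-orbits, orbits can be contracted simultaneously) match the preliminary steps i)--iv) of the paper's proof. But there is a genuine gap at the decisive final step. You contract greedily until no $G_R$-stable set of $(-1)$-curves remains and then appeal to the classification of $k$-minimal rational surfaces to conclude that the result is $\mathbb{P}^2$ or $\mathbb{P}^1\times\mathbb{P}^1$. That conclusion does not follow: a $k$-minimal rational surface reached this way can also be the Hirzebruch surface $\mathbb{F}_2$, a nontrivial Severi--Brauer surface, a nonsplit quadric, or a minimal del Pezzo surface or conic bundle of higher degree. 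This is not a hypothetical worry --- Proposition \ref{prop: Galois R2, R3 and R4} of this paper exhibits contraction sequences on $R_2$ and $R_3$ that terminate at $\mathbb{F}_2$, which is not one of the two targets claimed in Lemma \ref{models over QQ}. The actual content of the lemma is that one can \emph{choose} the order and the packets of contractions so as to land on $\mathbb{P}^2$ or $\mathbb{P}^1\times\mathbb{P}^1$ specifically, and this is exactly what your proposal leaves unproved: the sentence ``the only possibilities that arise here are $\mathbb{P}^2$ \dots or $\mathbb{P}^1\times\mathbb{P}^1$'' is an assertion, not an argument. The paper discharges it by writing down, configuration by configuration, an explicit admissible sequence of contractions and identifying the terminal surface by hand (e.g.\ for $I_9$ one must contract $C_0,C_3,C_6$ and then $C_2,C_7$, not some other choice). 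Relatedly, even once the terminal surface has the right geometric type, you would still need to rule out nontrivial forms: a $k$-point coming from a contracted section settles the Severi--Brauer case, but a $k$-point on a quadric does not by itself force it to be $\mathbb{P}^1\times\mathbb{P}^1$ over $k$, so some argument about the rulings is needed there too.

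A secondary issue: you invoke Lemma \ref{lemma: quadratic extension} to assert that all sections and fiber components live over a quadratic extension and that ``conjugate pairs are the worst that can occur.'' That lemma assumes the reducible fibers are pairwise distinct, whereas Lemma \ref{models over QQ} also covers $(4I_3)$, $(2I_5,2I_1)$, $(2I_4,2I_2)$ and $(I_2^*,2I_2)$, for which $G_R$ can be larger (cyclic of order $4$ for $(2I_5)$, biquadratic for $(4I_3)$; see Remark \ref{RmkNS}) and orbits of contractible curves can have size $4$. You acknowledge this in passing, but the bookkeeping in your second paragraph is carried out only under the two-element-group assumption, so the repeated-fiber cases are not actually handled. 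To repair the proof you would need to replace the appeal to minimality by the explicit per-configuration choice of contractions, which is precisely what the paper does.
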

\begin{proof}
We recall that the Galois group $G_R$ preserves the zero section, maps a fiber of a certain Kodaira type to a fiber of the same Kodaira type and maps sections to sections. The consequences are the following:
	\begin{enumerate}
		\item[i)] If $\mathrm{MW}(\mathcal{E}_R)=\{0\}$ or $\mathrm{MW}(\mathcal{E}_R)=\Z/2\Z$, then $G_R$ maps each section to itself;
		\item[ii)] If every reducible fiber is of different Kodaira type, then $G_R$ maps the zero component of each fiber to itself;
		\item[iii)] If both i) and ii) are satisfied, then $G_R$ is trivial since in that case the fiber with most components is a non-reduced fiber of type $II^* ,III^*$ or $I_4^*$ (see the table in \cite[Thm. 4.1]{MiPe}) and each component is  preserved by the Galois group because the zero section and the two torsion are preserved and defined over $k$; 
		\item[iv)] If there is a fiber which is preserved by $G_R$ as, for example, in case ii) and it is either of type $I_n$ or of type $IV^*$, then $G_R$ restricted to that fiber and to $\mathrm{MW}(\mathcal{E}_R)$ acts trivially or as the hyperelliptic involution because it has to preserve the intersection properties of the components of the reducible fiber of type $I_n$.
	\end{enumerate}

Using these properties of $G_R$, one is able to find an explicit contraction $\gamma$ defined over $k$, which maps a rational elliptic surface $R$ either to $\mathbb{P}^2$ or to $\mathbb{P}^1\times \mathbb{P}^1$ for all the extremal rational elliptic surfaces $R$ with reducible fiber configuration different from $(2I_0^*)$.

{\bf Fibrations $(II^*,II)$, $(II^*,2I_1)$, $(III^*,III)$, $(III^*,I_2,I_1)$, $(I_4^*,2I_1)$}: $G_R$ is trivial because iii) in the previous list is satisfied. One first contracts all the sections, then contracts the image of the components of the fibers $II^*$, $III^*$, $I_4^*$, respectively, that are the $(-1)$-curves after the previous contractions. One iterates this process in order to contract 9 curves. The composition of all these contractions is a map $R\ra\mathbb{P}^2$, defined over $k$.

{\bf Fibrations $(IV^*,IV)$, $(IV^*,I_3, I_1)$}: by iv), $G_R$ acts trivially or coincides with the hyperelliptic involution. After contracting all the sections, one obtains three $(-1)$-curves in the image of the $IV^*$-fiber. One is preserved by $G_R$, the other two might be exchanged by it. After contracting these three curves, one is in a similar situation, i.e. there are three $(-1)$-curves, forming two or three orbits for $G_R$. After contracting also these three curves, one obtains a $k$-rational map from $R$ to $\mathbb{P}^2$.

{\bf Fibrations $(I_9,3I_1)$, $(I_8,I_2, 2I_1)$, $(I_6,I_3,I_2)$}: by $(iv)$, $G_R$ acts trivially or coincides with the hyperelliptic involution. First one contracts all the sections. Then one contracts some curves in the image of the fibers of type $I_9$, $I_8$ and $I_6$ respectively, but not in the other reducible fibers. For the fiber $I_9$ one contracts the images of the components $C_0$, preserved by $G_R$, and of $C_3$ and $C_6$, which are either fixed or switched by $G_R$; after that one contracts the images of the curves $C_2$ and $C_7$, which are also either fixed or switched by $G_R$.
For the fiber $I_8$ one contracts the images of components $C_0$ and $C_4$, which are preserved by $G_R$, and of the components $C_2$ and $C_6$, which are either fixed or conjugate under $G_R$.
For the fiber of type $I_6$ one contracts the images of components $C_0$ and $C_3$, which are preserved by $G_R$. In all the cases one obtains a $k$-rational map from $R$ to $\mathbb{P}^1\times\mathbb{P}^1$.

{\bf Fibrations of type $(4I_3)$ and $(2I_4,2I_2)$}: in both these cases there are many sections, namely 9 sections in case $(4I_3)$ and 8 in case $(2I_4,2I_2)$. Since the torsion sections are disjoint and $G_R$ preserves $\mathrm{MW}(\mathcal{E}_R)$, one can contract simultaneously all the sections. This produces a $k$-rational map to $\mathbb{P}^2$ in case $(4I_3)$ and to $\mathbb{P}^1\times\mathbb{P}^1$ in case $(2I_4,2I_2)$.

{\bf Fibration of type $(2I_5, 2I_1)$}: we have $G_R\subseteq \Z/4\Z$, and if $G_R=\Z/4\Z$ then the action of the generator of $G_R$ is the following. $t_0\ra t_0$, $C_{0}^{(1)}\leftrightarrow C_0^{(2)}$, $t_1\ra t_3\ra t_4\ra t_2\ra t_1$, $C_{1}^{(1)}\ra C_{1}^{(2)}\ra C_4^{(1)}\ra C_4^{(2)}$, $C_{2}^{(1)}\ra C_{2}^{(2)}\ra C_3^{(1)}\ra C_3^{(2)}$, where $C_i^{(j)}$ the $i$-th component of the $j$-th fiber of type $I_5$.
To obtain a $k$-rational map to $\mathbb{P}^2$, one first contracts all the sections, and then one contracts the components $C_{1}^{(1)}$, $C_{1}^{(2)}$, $C_4^{(1)}$, $C_4^{(2)}$, which form an orbit if $G_R=\Z/4\Z$. 

{\bf Fibration of type $(I_2^*,2I_2)$ and $(I_1^*,I_4, I_1)$}: one contracts first the four sections and then the images of the four simple components of the fiber of type $I_i^*$. This gives a $k$-rational map to $\mathbb{P}^1\times\mathbb{P}^1$.\end{proof}

\begin{proposition}\label{differentblowdowns}
 Let $R$ be a semi-stable extremal rational elliptic surface defined over $k$ and $m$ the order of the Mordell--Weil group. Then the following holds.
 \begin{itemize}
  \item[i)] If $m$ is odd and $R$ has a unique reducible fiber then $R$ admits a contraction over $k$ to $\mathbb{P}^1\times \mathbb{P}^1$.
  \item[ii)] If $m$ is odd and $R$ has at least two reducible fibers then $R$ admits a contraction over $k$ to $\mathbb{P}^2$.
  \item[iii)] If $m$ is even then $R$ admits a contraction over $k$ to $\mathbb{P}^1\times\mathbb{P}^1$.
 \end{itemize}
\end{proposition}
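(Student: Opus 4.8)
The plan is to deduce this from the Miranda--Persson classification together with the explicit contractions produced in the proof of Lemma~\ref{models over QQ}. A semi-stable extremal rational elliptic surface has only fibers of type $I_n$, hence no non-reduced fiber, so Lemma~\ref{models over QQ} applies verbatim and in particular $R$ is $k$-rational. First I would recall from \cite[Thm.~4.1]{MiPe} that there are exactly six such configurations, together with their (torsion) Mordell--Weil groups:
\[
\begin{array}{lll}
(I_9,3I_1)\colon \mathbb{Z}/3\mathbb{Z}, & (I_8,I_2,2I_1)\colon \mathbb{Z}/4\mathbb{Z}, & (I_6,I_3,I_2,I_1)\colon \mathbb{Z}/6\mathbb{Z},\\
(2I_5,2I_1)\colon \mathbb{Z}/5\mathbb{Z}, & (2I_4,2I_2)\colon \mathbb{Z}/4\mathbb{Z}\times\mathbb{Z}/2\mathbb{Z}, & (4I_3)\colon (\mathbb{Z}/3\mathbb{Z})^2.
\end{array}
\]
Reading off $m=|\mathrm{MW}(\mathcal{E}_R)|$, one checks that case i) is exactly $(I_9,3I_1)$, case ii) is exactly $\{(2I_5,2I_1),(4I_3)\}$, and case iii) is exactly $\{(I_8,I_2,2I_1),(I_6,I_3,I_2,I_1),(2I_4,2I_2)\}$. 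Thus the three cases of the proposition partition the six configurations, and it suffices to exhibit the stated contraction in each.

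\textbf{A uniform backbone.} Since $R$ is extremal, all its sections are torsion; by the proof of Lemma~\ref{models over QQ} they are pairwise disjoint and the full set of them, being exactly the torsion of $\mathrm{MW}(\mathcal{E}_R)$, is globally defined over $k$ and hence $G_R$-stable. I would therefore begin by contracting all $m$ sections simultaneously over $k$, obtaining a smooth rational surface with $K^2=m$ and Picard rank $10-m$. When $m=9$ (configuration $(4I_3)$) this surface has Picard rank $1$ and $K^2=9$; being $k$-rational it carries a $k$-point and is therefore $\mathbb{P}^2$. When $m=8$ (configuration $(2I_4,2I_2)$) it has Picard rank $2$ and the construction of Lemma~\ref{models over QQ} identifies it with $\mathbb{P}^1\times\mathbb{P}^1$. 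In the four remaining configurations I would continue exactly as in Lemma~\ref{models over QQ}: for $(2I_5,2I_1)$ contract the $G_R$-orbit $\{C_1^{(1)},C_1^{(2)},C_4^{(1)},C_4^{(2)}\}$, raising $K^2$ from $5$ to $9$ and landing in $\mathbb{P}^2$; for $(I_8,I_2,2I_1)$ and $(I_6,I_3,I_2,I_1)$ contract $C_0,C_2,C_4,C_6$ of $I_8$, respectively $C_0,C_3$ of $I_6$, raising $K^2$ to $8$ and landing in $\mathbb{P}^1\times\mathbb{P}^1$; and for $(I_9,3I_1)$ contract $C_0,C_3,C_6$ and then $C_2,C_7$, again reaching $K^2=8$ and $\mathbb{P}^1\times\mathbb{P}^1$.

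\textbf{Main obstacle.} The parity dichotomy in the statement then emerges as an \emph{output} of this bookkeeping rather than an input: the odd-$m$ configurations with at least two reducible fibers carry enough $G_R$-stable fiber components to be brought all the way to $K^2=9$, whereas the even configurations and the single-fiber configuration $(I_9,3I_1)$ are only brought to $K^2=8$. The step requiring the most care is to verify, at each continuation, that the fiber components to be contracted have become disjoint $(-1)$-curves after the previous blow-downs and that the chosen set is genuinely $G_R$-stable, so that the simultaneous contraction descends to $k$; this is where the description of the $G_R$-action in Lemma~\ref{models over QQ} (acting through $\pm 1$ on $\mathrm{MW}(\mathcal{E}_R)$ and on any preserved $I_n$-cycle) is used. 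The subtlest instance is $(2I_5,2I_1)$, where $G_R$ may be as large as $\mathbb{Z}/4\mathbb{Z}$ and the four components above form a single Galois orbit that must be checked to be disjoint and contractible in order to descend the map to $\mathbb{P}^2$. All these verifications are already contained in the proof of Lemma~\ref{models over QQ}, so the proposition follows from it; the only genuinely new content is the clean reorganization of the three geometric outcomes according to the parity of $m$ and the number of reducible fibers.
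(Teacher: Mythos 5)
Your proof is correct and follows essentially the same route as the paper's: both arguments reduce the statement to the Miranda--Persson list of the six semi-stable extremal configurations, sort them by the parity of $m$ and the number of reducible fibers, and then invoke the explicit $k$-contractions already constructed in the proof of Lemma~\ref{models over QQ}. (Incidentally, your conclusion that case iii) lands in $\mathbb{P}^1\times\mathbb{P}^1$ agrees with the statement and with Lemma~\ref{models over QQ}; the paper's own proof text for that case contains a typo asserting $\mathbb{P}^2$.)
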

\begin{proof} The result follows by the proof of the previous lemma. Indeed, if $R$ is a semi-stable extremal elliptic fibration and $m$ is odd, then the fibration on $R$ is one of the following: $(I_9,3I_1)$, $(2I_5,2I_1)$, $(4I_3)$. The first fibration corresponds to case $i)$ and can be contracted to $\mathbb{P}^1\times\mathbb{P}^1$, for every action of $G_R$. The other two fibrations correspond to the case $ii)$ and it was already proved that they can be contracted to $\mathbb{P}^2$.
		
If $m$ is even (case $iii)$), then the fibration on $R$ is one of the following: $(I_8,I_2,2I_1)$, $(I_6, I_3, I_2)$, $(2I_4, 2I_2)$ and in the proof of the previous lemma is shown that all of them can be contracted to $\mathbb{P}^2$. \end{proof}

\begin{rem}\rm{
The converse of the different cases in Proposition \ref{differentblowdowns} is not always true; some of the surfaces treated in Lemma \ref{models over QQ} can be contracted, over an algebraically closed field, to both $\mathbb{P}^2$ and $\mathbb{P}^1\times\mathbb{P}^1$. Whether or not these surfaces can be contracted to both $\mathbb{P}^2$ and $\mathbb{P}^1\times\mathbb{P}^1$ over $k$ as well depends on the action of $G_R$, and in particular on the action of the hyperelliptic involution on the reducible fibers. See Proposition~\ref{prop: Galois of R9} and Figure \ref{blowdown I9}, where we show this for a surface with fibers $(I_9,3I_1)$.}\end{rem}

\section{Double covers of extremal rational elliptic surfaces}\label{sec: double covers on ext. RES}

In the rest of this article we consider K3 surfaces that are double covers of extremal rational elliptic surfaces defined over $k$ and branched on two smooth $G_{\bar{k}}$-conjugate fibers. Let $X$ be such a surface. Recall that since the extremal rational elliptic surfaces considered here\footnote{We exclude rational elliptic surfaces with $(2I_0^*)$. These have a 1-dimensional moduli space.} are rigid, their K3 double covers have a 2-dimensional moduli space, as each branch point is allowed to vary in $\mathbb{P}^1$. 

In this section we show that the field over which a genus one fibration on $X$ admits a section depends on the action of the cover involution on the fibers of the genus one fibration. 

\begin{notation}
Let $R$ and $X$ be as above and $t_1,\cdots, t_m \in \mathbb{P}^1_k$ points over which the reducible fibers of $R$ are located. Since the base change map $X \rightarrow R$ is branched only over smooth fibers, there are two distinct points above each $t_i$. Then $\tau$ restricted to the pair of fibers of $\mathcal{E}_X$ above each $t_i$ is a field homomorphism, which we denote by $\sigma_i$. We denote by $k_{\tau}$ the Galois field extension of $k$ whose Galois group is generated by $\sigma_1, \cdots, \sigma_m$. By construction $k_{\tau}/k$ is an extension of even degree dividing $2^m$.
We denote by $k_{R,\tau}$ the compositum of the fields $k_R$ and $k_{\tau}$.
\end{notation}

\begin{lemma}\label{generators_NS(X)} Let $R$ be an extremal rational elliptic surface as above and $X$ a generic member of the 2-dimensional family given by double covers of $R$ branched in two smooth fibers. Then $\mathrm{NS}(X)$ admits a set of generators over $k_{R,\tau}$.
\end{lemma}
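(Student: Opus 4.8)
The plan is to exhibit a geometric generating set of $\mathrm{NS}(X)$ coming from the induced fibration $\mathcal{E}_X\colon X\to C$, and then to bound the field of definition of each generator. First I would invoke the Shioda--Tate formula for $\mathcal{E}_X$, exactly as in the proof of Lemma \ref{lemma: quadratic extension}: writing $T_X=\langle \mathcal{O}_X,F_X\rangle\oplus\sum_{v}\sum_{i\in S_v}\mathbb{Z}\,\Theta_{v,i}$ for the trivial lattice of $\mathcal{E}_X$ (where $v$ runs over the reducible fibers of $\mathcal{E}_X$), the exact sequence $0\to T_X\to \mathrm{NS}(X)\to \mathrm{MW}(\mathcal{E}_X)\to 0$ shows that $\mathrm{NS}(X)$ is generated by the zero section $\mathcal{O}_X$, a smooth fiber $F_X$, the non-identity components $\Theta_{v,i}$ of the reducible fibers of $\mathcal{E}_X$, and lifts to $\mathrm{NS}(X)$ of a set of generators of $\mathrm{MW}(\mathcal{E}_X)$. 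Since $\mathcal{E}_X$ and its zero section are defined over $k$, it remains to control the fields of definition of the reducible fiber components and of the Mordell--Weil generators.

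For the fiber components I would argue as follows. As the branch fibers are smooth, each reducible fiber of $R$, say the one over $t_i\in\mathbb{P}^1_k$, lifts to two reducible fibers of $\mathcal{E}_X$ of the same Kodaira type, lying over the two points of $d^{-1}(t_i)\subset C$ and interchanged by $\tau$. Each non-identity component $C_i$ of the reducible fiber of $\mathcal{E}_R$ over $t_i$ is defined over $k_R$ by the definition of that field (Notation \ref{not: compositum}), and since $t_i$ is not a branch point, $\pi^{-1}(C_i)=C_i^{(1)}\sqcup C_i^{(2)}$ is a disjoint union of two curves, each mapping isomorphically onto $C_i$ and exchanged by $\tau$. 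Now let $g\in\Gal(\overline{k}/k_{R,\tau})$. Since $g$ fixes $k_R$ and $\pi$ is defined over $k$, the class $g(C_i^{(j)})$ again lies over $C_i$, hence equals $C_i^{(1)}$ or $C_i^{(2)}$; since $g$ also fixes $k_\tau$, it fixes each of the two points of $d^{-1}(t_i)$ and therefore preserves the fiber lying over it. These two facts force $g(C_i^{(j)})=C_i^{(j)}$, so every reducible fiber component of $\mathcal{E}_X$ is defined over $k_{R,\tau}$.

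For the Mordell--Weil generators I would use the genericity hypothesis. Because $R$ is rigid, its family of K3 double covers is $2$-dimensional, so the generic member has Picard number $18$; comparing this with the rank of $T_X$ (two copies of the rank-$8$ root lattice of the reducible fibers of $R$, together with $\langle \mathcal{O}_X,F_X\rangle$) shows that $\mathrm{MW}(\mathcal{E}_X)$ has rank $0$. For the generic member it moreover coincides with $\pi^*\mathrm{MW}(\mathcal{E}_R)$: any $\tau$-invariant section of $\mathcal{E}_X$ is, by Lemma \ref{key lemma} together with the hypothesis that $\pi$ is branched over two smooth fibers, the pullback $\pi^{-1}(P_R)$ of a section $P_R$ of $\mathcal{E}_R$, while $\tau$-anti-invariant torsion sections would map to bisections of $\mathcal{E}_R$ that do not occur for a generic cover. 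By the argument of Lemma \ref{lemma: sections on R and induced curves on X}, applied over $k_R$ rather than over $k$, the pullback $\pi^{-1}(P_R)$ of a section defined over $k_R$ is irreducible and defined over $k_R\subseteq k_{R,\tau}$. Hence a set of generators of $\mathrm{MW}(\mathcal{E}_X)$ is defined over $k_{R,\tau}$.

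Putting the three cases together, every element of the Shioda--Tate generating set of $\mathrm{NS}(X)$ is fixed by $\Gal(\overline{k}/k_{R,\tau})$, and therefore $\mathrm{NS}(X)$ admits a set of generators over $k_{R,\tau}$. The hard part is the genericity input in the third step: one must ensure that the listed classes really do exhaust $\mathrm{NS}(X)$ for the generic member, equivalently that no unexpected $(-2)$-curves raise the Picard number and that $\mathrm{MW}(\mathcal{E}_X)$ is not enlarged beyond $\pi^*\mathrm{MW}(\mathcal{E}_R)$ by anti-invariant torsion sections (which, unlike the pullbacks from $R$, need not be defined over $k_{R,\tau}$). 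This is precisely where the structure theory of these double covers developed in \cite{GS} and \cite{GS2}, and the dimension count of the moduli space, are essential.
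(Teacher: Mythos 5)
Your proof is correct and follows essentially the same route as the paper's: the Shioda--Tate decomposition for $\mathcal{E}_X$, the genericity/dimension count forcing $\mathrm{NS}(X)$ to have rank $18$, and a verification that each natural generator is defined over $k_{R,\tau}$. You in fact supply more detail than the paper's short proof, notably the explicit Galois argument for the split fiber components and the treatment of the torsion of $\mathrm{MW}(\mathcal{E}_X)$ --- a point worth keeping, since the generating set named in the paper (fiber components, zero section, smooth fiber) only spans a finite-index sublattice in general (e.g.\ for $X_9$ the trivial lattice has index $3$ in $\mathrm{NS}(X_9)$ and the $3$-torsion sections, which are pullbacks of sections of $R$ defined over $k_R$, are genuinely needed).
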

\begin{proof} Since the N\'eron--Severi group has rank 10 and the Mordell--Weil group has rank zero, it follows from the Shioda--Tate formula that the reducible fibers of an extremal rational elliptic surface $R$ have in total 8 components contributing to the set of generators of $\mathrm{NS}(R)$. Since $X$ is a double cover of an extremal rational elliptic surface $R$ branched on smooth fibers, the reducible fibers of the inherited fibration $\mathcal{E}_X$ contribute with 16 components to a set of generators of $\mathrm{NS}(X)$. If $X$ is generic among such surfaces then it lies in a 2-dimensional family and hence $\mathrm{NS}(X)$ has rank 18 and is generated by fiber components, the zero section and a smooth fiber of $\mathcal{E}_X$. All such curves are defined at most over $k_{R, \tau}$.
\end{proof}

\begin{theorem}\label{theo: types and field of definition}
Let $R$ be an extremal rational elliptic surface defined over $k$ such that its reducible fibers are all of distinct Kodaira types. Let $X$ be a K3 surface obtained as a double cover of $R$ branched on two smooth fibers conjugate under $G_{\bar{k}}$, $\tau$ the cover involution and $\eta$ a genus 1 fibration on $X$. Then the following hold.
\begin{itemize}
\item[i)] If $\eta$ is of \emph{type 1} w.r.t. $\tau$ then $\eta$ is defined over $k_R$ and admits a section over $k_{R, \tau}$.
\item[ii)] If $\eta$ is of \emph{type 2} w.r.t. $\tau$ then it is defined and admits a section over $k$.
\end{itemize}
\end{theorem}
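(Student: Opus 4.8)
The plan is to exploit that the cover involution $\tau$ is itself defined over $k$ (being induced by the $k$-morphism $d\colon C\to\mathbb{P}^1$), so that $\tau^*$ commutes with the $G_{\bar k}$-action on $\mathrm{NS}(X)$, together with the fact that $X/\tau=R$ exactly, since the branch fibres are smooth. Both type 1 and type 2 fibrations satisfy $\tau^* f=f$ for the class $f$ of a fibre of $\eta$, and I would first turn this invariance into a descent statement: as $\eta\colon X\to\mathbb{P}^1$ is $\tau$-equivariant for the induced involution on its base, it descends to a fibration $\eta_R\colon R\to\mathbb{P}^1$ on the quotient. The restriction of $\pi$ to a fibre $E$ of $\eta$ then governs the geometry. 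For type 1 one has $\tau(E)=E$, so $\pi|_E$ has degree $2$ and $\eta_R$ is a genus $0$ (conic-bundle) fibration with $\pi^* g=f$; for type 2 one has $\tau(E)\neq E$, so $\pi|_E$ is birational and $\eta_R$ is a genus $1$ fibration with $\pi^* g=2f$, where in each case $g\in\mathrm{NS}(R)$ denotes the class of a fibre of $\eta_R$. These identities follow from the projection formula once one checks that $\pi$ is étale over a general fibre of $\eta_R$.

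For part ii) (type 2) I would invoke the uniqueness of the elliptic fibration on a relatively minimal rational elliptic surface: any genus $1$ fibration on $R$ has fibre class an isotropic element of $K_R^{\perp}$, hence a multiple of $-K_R$, so $g=-K_R$ and $\eta_R$ is the anticanonical fibration $\mathcal{E}_R$. As $-K_R$ is the canonical class, it is $G_{\bar k}$-invariant, whence $f=\tfrac12\pi^*(-K_R)$ is defined over $k$. In particular $\eta$ shares its fibre class with $\mathcal{E}_X$, and a section is provided by the zero section: by Lemma \ref{lemma: sections on R and induced curves on X} the curve $\mathcal{O}_X=\pi^{-1}(\mathcal{O})$ is irreducible and defined over $k$, and the projection formula gives $\mathcal{O}_X\cdot f=\mathcal{O}\cdot(-K_R)=1$, so $\mathcal{O}_X$ is a section of $\eta$ over $k$. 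This yields $k_\eta=k$.

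For part i) (type 1) the fibre class is $f=\pi^* g$ with $g\in\mathrm{NS}(R)$ the class of a conic-bundle fibre. By Lemma \ref{lemma: quadratic extension} the group $\mathrm{NS}(R)$ admits generators over $k_R$, so $G_{k_R}$ fixes $g$; since $\pi$ is defined over $k$, the class $f$ is $G_{k_R}$-invariant and $\eta$ is defined over $k_R$. For the section I would argue at the level of $\mathrm{NS}(X)$: assuming $\eta$ admits a geometric section $S$ (so that it is elliptic), the class $[S]$ lies in $\mathrm{NS}(X)$, which by Lemma \ref{generators_NS(X)} is fixed by $G_{k_{R,\tau}}$; as $S$ is a smooth rational curve of self-intersection $-2$, it is the unique effective divisor in its class, so $G_{k_{R,\tau}}$ fixes $S$ as a curve and $S$ descends to $k_{R,\tau}$. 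This gives a section over $k_{R,\tau}$.

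The main obstacle lies in part i), in the existence over $\bar k$ of a section at all: a type 1 fibration is elliptic precisely when $f=\pi^* g$ has divisibility $1$ in $\mathrm{NS}(X)$, equivalently when some curve meets a general fibre of $\eta$ in odd degree. Note that every $\tau$-invariant curve meets $f$ evenly (it either covers its image $2{:}1$, or lies in the ramification locus, which meets $f$ in degree $g\cdot(-K_R)=2$), so a section must come from a curve $C$ with $\tau(C)\neq C$ as in case (2) of Lemma \ref{key lemma}. Verifying this divisibility is surface-dependent and is what the explicit analysis of the families $X_i$ in the later sections supplies; granting it, the field bound $k_{R,\tau}$ is immediate from Lemma \ref{generators_NS(X)}. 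A secondary point to make rigorous is the descent of $\eta$ to $R$ together with the identification $g=-K_R$ in the type 2 case, for which the isotropy computation in $K_R^{\perp}$ is the clean input.
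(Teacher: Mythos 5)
Your proposal is correct and its skeleton coincides with the paper's, but you fill in two points differently, and it is worth recording how. For part ii) the paper proves uniqueness of the type 2 fibration by observing that distinct type 2 fibrations would correspond to distinct contractions of $(-1)$-curves on $X/\tau$, and that a smooth branch locus forces $X/\tau\simeq R$ with no curves to contract, so the induced fibration on the quotient is the relatively minimal one, i.e.\ $\mathcal{E}_R$; your route via isotropic classes in $K_R^{\perp}$ (any genus $1$ fibre class on $R$ is a positive multiple of $-K_R$, hence $\eta_R=\mathcal{E}_R$ and $f=F$) reaches the same conclusion by a purely lattice-theoretic computation, which is arguably more self-contained than the contraction argument. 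For part i) your first half (pull back the class of a conic, defined over $k_R$ because $\mathrm{NS}(R)$ has generators over $k_R$) is exactly the paper's argument, citing \cite[Theorem 4.2]{GS} for the identification of type 1 fibrations with pull-backs of conic bundles; for the section, your rigidity-plus-Galois-invariance descent is precisely the mechanism the paper isolates in Remark \ref{lemma: type 3}, though note that Lemma \ref{generators_NS(X)} is stated only for \emph{generic} $X$, so to avoid that hypothesis one should locate the section among the curves $\Theta_i^j$, $\mathcal{O}_X$, $T_k$, which are defined over $k_{R,\tau}$ unconditionally. Finally, your flag that the \emph{geometric} existence of a section for a type 1 fibration (equivalently, that $\pi^*g$ has a class of odd intersection with it, necessarily supplied by a curve with $\tau(C)\neq C$) is not established by the general argument is accurate: the paper does not prove this in Theorem \ref{theo: types and field of definition} either, but defers it to \cite[Theorem 4.2]{GS} and to the explicit fibre-by-fibre verifications of Sections \ref{sec: classification of the fibrations of X9} and \ref{sec: classification X2 X3 X4}, so this is a limitation of the statement's generality rather than a defect of your argument.
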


\begin{proof}
For ii) notice that because the branch locus is smooth there is only one fibration of type 2, namely the one induced by the elliptic fibration on $R$. Indeed, different fibrations of type 2 correspond to different contractions of $(-1)$-curves in $X/{\tau}$ that are components of non-relatively minimal elliptic fibrations. Since the branch locus is smooth there are no $(-1)$-curves to be contracted and, in particular, $X/{\tau} \simeq R$. Since the double cover morphism is defined over $k$ so is the induced elliptic fibration on $X$ and the zero section inherited from $R$.
If $\eta$ is of type 1 then each fiber is the pull-back of a conic\footnote{A conic is a rational curve $C$ such that $C\cdot (-K_R)=2$.} in $R$ \cite[Theorem 4.2]{GS}. Let $C$ be such a conic. Since $\mathrm{NS}(R)$ is generated by curves defined over $k_R$ then the class of $C$ has a divisor $C_0$ whose components are defined over $k_R$. Moreover, as the fibers of $\eta$ are fixed by $\tau$, the pull-back $C_0$ is also defined over $k_R$. Its class moves in $X$ giving the elliptic fibration $\eta$. 
\end{proof}
The fibrations of type 3 are certainly more difficult to study by using the geometry related with $R$. Indeed, even if $X$ is a double cover of $R$, the fibrations of type 3 are not easily related with the geometry of $R$, by definition, since they are not preserved by the cover involution. But, one is still able to prove that certain fibration of type 3 are defined on certain fields, if one is able to find components of their reducible fiber is a proper way, as observed in the next Remark.

\begin{rem}\label{lemma: type 3}{\rm Since the irreducible components of reducible fibers and of the sections  of the elliptic fibration on K3 surface are rational curves, they are rigid in their class. So if their class is defined over a certain field, say $k_{R,\tau}$, and they are irreducible curves, then they are defined over $k_{R,\tau}$. Suppose now that the N\'eron--Severi group is defined over $k_{R,\tau}$ and it is generated by a certain set of classes of irreducible rational curves. If the union of some of these curves is a reducible fiber $F$ of a fibration $\eta$, then the reducible fiber $F$ and its class are defined over $k_{R,\tau}$. In particular the fibration $\eta$ is defined on $k_{R,\tau}$ and if also a section of $\eta$ can be found among the generators of the N\'eron--Severi, then $\eta$ is an elliptic fibration on $k_{R,\tau}$.
		
		So, in order to prove that a fibration of type 3 defined on a K3 surface satisfying the assumptions of Theorem \ref{theo: types and field of definition}, is defined over $k_{R,\tau}$, it suffices to find among the generators of $\mathrm{NS}(X)$ a configuration of $(-2)$-curves which corresponds to a reducible fiber of $\eta$. }
\end{rem}

\begin{rem}{\rm
We believe that it is always possible to find a fibration of type 3 as in the previous remark, at least for the K3 surfaces $X$ as in Thereom \ref{theo: types and field of definition}. We are able to prove this for all the elliptic fibrations of type 3 on the surfaces considered in Sections \ref{sec: classification of the fibrations of X9}, and \ref{sec: classification X2 X3 X4} of this paper. Hence for all the surfaces considered in this paper, we have that the field of definition of the elliptic fibrations on the K3 surfaces $X$ as in Theorem \ref{theo: types and field of definition} are at most biquadratic extension of $k$, by the explicit description of the elliptic fibration and the Remark \ref{lemma: type 3}.
}\end{rem}
\begin{rem}\rm{
Certain sections on elliptic K3 surfaces as above might be defined over a smaller subfield of $k_{R,\tau}$ that contains $k$. See, for instance, the fifth column of lines 2, 3, 4, 9, 11 and 12 in Table \ref{eq: tableof types of the elliptic fibrations on X9}.
}
\end{rem}

Following the geometric classification of extremal rational elliptic surfaces by Miranda and Persson \cite[Theorem 4.1]{MiPe}, we notice that, among those surfaces, only four of them have only one reducible fiber, namely $(I_9,3I_1)$, $(II^*,II)$, $(II^*,2I_1)$ and $(I_4^*,2I_1)$. 
From a lattice theoretic point of view the surfaces with singular fibers $(II^*,II)$ and $(II^*,2I_1)$ are the same since, from that perspective, only the reducible fibers matter. Moreover, they share the same properties of interest to us, namely reducible fibers and fields of definition of components of fibers and thus we denote both of them by $R_2$. In the following sections, we study those extremal rational elliptic surfaces, denoted by $R_9$, $R_2$, and $R_4$ and their corresponding K3 surfaces $X_9$, $X_2$, and $X_4$, respectively. We also study the surface $R_3$ which has two reducible fibers $(III^*,III)$ and its generic K3 cover $X_3$. The justification for considering $R_3$ as well is the fact that the surface $X_4$ occurs also as double cover of $R_3$ and hence $X_3$ and $X_4$ belong to the same family of K3 surfaces.

\subsection{Arithmetic models of extremal rational elliptic surfaces}\label{uniquenessofmodels}
Over algebraically close fields, all rational elliptic surfaces can be obtained by the blow up of the base points of a pencil of genus 1 curves in the projective plane. Over a number field $k$, this not longer holds true. Nevertheless, if one restricts attention to extremal rational elliptic surfaces, we have shown in Lemma \ref{models over QQ} that, with one possible exception, they can be obtained as a blow up of a pencil of genus 1 curves in the plane or in the ruled surface $\mathbb{P}^1\times \mathbb{P}^1$. The realization of the blow down of an extremal rational elliptic surface $R$ to either rational minimal model is connected to, but not always determined by, the Galois group $G_R$ introduced in Notation \ref{not: compositum}. More precisely, given singular fiber configurations on an extremal rational elliptic surface might entail more than one possible action of the Galois group $G_{\bar{k}}$ on its fiber components and hence, with a few exceptions, it does not make sense anymore to speak about \textbf{the} extremal rational elliptic surface with a given configuration as one does over algebraically closed fields. In what follows we keep the notation $R_i$ and $X_i$ for \textbf{a} surface with fiber configuration described in the previous paragraph. We study what are the possible actions of $G_{\bar{k}}$ on each configuration. We show, in Propositions \ref{prop: Galois of R9} and \ref{prop: Galois R2, R3 and R4} respectively, that $R_9$ might admit two possible actions, while  $R_2, R_3, R_4$ always admit a unique action. 

	\section{The surfaces \texorpdfstring{$R_9$}{R9} and \texorpdfstring{$X_9$}{X9}}\label{sec: surfaces R9 and X9}
	
Let $R_9$ be an extremal rational elliptic surface with one reducible fiber of type $I_9$ and $X_9$ a K3 surface obtained by a double cover of $R_9$ branched in two smooth $G_{\bar{k}}$-conjugate fibers. 	
In this section, we classify all the possible fibrations of the K3 surface $X_9$ and determine their types with respect to the cover involution $\tau_9$, a field over which the class of a fiber is defined and a field over which the Mordell--Weil group is defined.

\subsection{Negative curves on \texorpdfstring{$R_9$}{R9}}\hspace{0pt}

\vspace{5pt}

Recall that the configuration $I_9$ is given by 9 smooth rational curves meeting in a cycle with dual graph $\tilde{A_8}$ (see \cite[Table I.4.1]{Mi}\footnote{Though this table contains a typo, namely a fiber of $I_n$ has dual graph $\tilde{A}_{n-1}$}). The singular fibers of $R_9$ are $I_9+3I_1$ and the Mordell--Weil group is $\Z/3\Z=\{\mathcal{O}, t_1, t_2\}$, where $\mathcal{O}$ is the zero section and $t_1$ and $t_2$ are 3-torsion sections. The N\'eron--Severi group of $R_9$ contains also the classes of the irreducible components of the unique reducible fiber, denoted by $C_0, C_1,\ldots, C_8$. The intersections which are not trivial are the following
\begin{align*} 
&C_i^2=-2; &C_iC_{j}=1\mbox{ iff }|i-j|=1;\\
&\mathcal{O}C_0=t_1C_3=t_2C_6=1; &\mathcal{O}^2=t_1^2=t_2^2=-1.
\end{align*}

The following result tells us that $R_9$ can always be obtained as the blow-up of the eight base points on a pencil of curves of bi-degree (2,2) in $\mathbb{P}^1\times \mathbb{P}^1$, and that if the Galois group $G_{R_9}$ fixes each 3-torsion section then $R_9$ can also be obtained as the blow-up of the nine base points of a pencil of cubics in $\mathbb{P}^2$ (see also Lemma \ref{models over QQ}). Both blow-ups occur in multiple points, i.e., points with assigned multiplicities.

\begin{proposition}\label{prop: Galois of R9}
	If for every $g\in G_{R_9}=\Gal (k_{R_9}/k)$ we have $g(t_1)=t_1$, then $G_{R_9}=\{ id\}$ and $R_9$ can be contracted both to $\mathbb{P}^2$ and to $\mathbb{P}^1\times \mathbb{P}^1$. 	
	If there exists at least one $g\in G_{R_9}$ such that $g(t_1)\neq t_1$, then $g(t_1)= t_2$, $G_{R_9}=\Z/2\Z=\langle g\rangle$ and $g$ is the elliptic involution $\iota_{R_9}$ restricted to the fiber $I_9$. In this case $R_9$ can be contracted to $\mathbb{P}^1\times \mathbb{P}^1$ but not to $\mathbb{P}^2$.
\end{proposition}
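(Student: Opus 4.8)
The plan is to first determine $G_{R_9}$ from its action on the Mordell--Weil group and on the components of the $I_9$ fibre, and then to reduce the two contraction statements to a question about which $G_{R_9}$-stable configurations of negative curves can be blown down.

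Since $\mathcal{O}$ and the fibre class $F$ are defined over $k$, every $g\in G_{R_9}$ fixes $\mathcal{O}$ and $F$ and acts on $\MW(\mathcal E_{R_9})=\mathbb{Z}/3\mathbb{Z}$ as a group automorphism fixing $0$, hence as $\pm\mathrm{id}$. As $\mathcal{O}$ meets only $C_0$ and $t_1,t_2$ meet only $C_3,C_6$, the induced action on $\{C_0,\dots,C_8\}$ is a symmetry of the $9$-cycle fixing $C_0$, so it is either the identity or the reflection $C_i\mapsto C_{-i}$. If $g(t_1)=t_1$ for all $g$, then each $g$ fixes $t_2$ as well, hence fixes $C_0,C_3,C_6$; the only cycle symmetry fixing both $C_0$ and $C_3$ is the identity, so $g$ fixes every generator $\mathcal{O},F,C_0,\dots,C_8,t_1$ of $\NS(R_9)$, giving $k_{R_9}=k$ and $G_{R_9}=\{\mathrm{id}\}$. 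If instead some $g$ satisfies $g(t_1)\neq t_1$, then $g$ acts as $-\mathrm{id}$ on $\MW$, so $g(t_1)=t_2$; the kernel of $G_{R_9}\to\Aut(\MW)$ fixes $t_1$ and is therefore trivial by the previous sentence, so $G_{R_9}\hookrightarrow\mathbb{Z}/2\mathbb{Z}$ and $G_{R_9}=\langle g\rangle\cong\mathbb{Z}/2\mathbb{Z}$. Since $g$ fixes $C_0$ and sends $C_3$ to $C_6$, it is the reflection $C_i\mapsto C_{-i}$, which is exactly the permutation of the components of $I_9$ induced by $\iota_{R_9}$ (fixing $\mathcal{O}$, inverting $\MW$); thus $g=\iota_{R_9}|_{I_9}$.

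For the contraction statements I would use that the three sections are the only $(-1)$-curves on $R_9$ (a $(-1)$-curve $D$ satisfies $D\cdot F=1$, hence is a section, and there are exactly $|\MW|=3$ of them), and that $\{\mathcal{O},t_1,t_2\}$, $\{C_0\}$ and $\{C_3,C_6\}$ are all $G_{R_9}$-stable. The contraction to $\mathbb{P}^1\times\mathbb{P}^1$ over $k$ then follows in both cases from the recipe in Lemma \ref{models over QQ} (case $(I_9,3I_1)$) together with Proposition \ref{differentblowdowns}(i), since every contracted block (the sections, then $\{C_0,C_3,C_6\}$, then $\{C_2,C_7\}$) is defined over $k$. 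In the first case $G_{R_9}$ is trivial, so every generator of $\NS(R_9)$ is defined over $k$ and any geometric blow-down of the rational surface $R_9$ to $\mathbb{P}^2$ descends to $k$; this settles the ``both'' assertion.

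The heart of the statement is the non-existence of a contraction to $\mathbb{P}^2$ over $k$ in the second case, and this is the step I expect to be the main obstacle. My plan is to contract the $G_{R_9}$-stable configuration $\{\mathcal{O},t_1,t_2\}$ and then $\{C_0,C_3,C_6\}$ over $k$, reaching a degree-$6$ del Pezzo surface $S$ whose six $(-1)$-curves are the images of $C_1,C_2,C_4,C_5,C_7,C_8$ arranged in a hexagon, with $g$ acting as the reflection that interchanges the two triples of pairwise disjoint $(-1)$-curves $\{C_1,C_4,C_7\}$ and $\{C_2,C_5,C_8\}$ while preserving the opposite pair $\{C_2,C_7\}$. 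A blow-down $S\to\mathbb{P}^2$ amounts to contracting one such triple; as neither triple is $G_{R_9}$-stable, whereas $\{C_2,C_7\}$ is, $S$ (and hence $R_9$) contracts over $k$ to $\mathbb{P}^1\times\mathbb{P}^1$ but not, through $S$, to $\mathbb{P}^2$; this is the content of Figure \ref{blowdown I9}. The delicate point is to rule out every other blow-down route, i.e.\ to show that $g$ fixes no class of a line in any geometric marking of $R_9$; equivalently, that the $G_{R_9}$-action on the finite set of nef classes $H$ with $H^2=1$ and $H\cdot F=3$ (the line classes of the various blow-downs of $R_9$ to $\mathbb{P}^2$) has no fixed point. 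I would stress that this cannot be deduced from the invariant lattice alone, since $\NS(R_9)^{G_{R_9}}$ does represent $1$; the argument must combine the explicit reflection action computed above with the nef-ness and effectivity of the candidate line classes, which is precisely where the full rigour of the claim has to be secured.
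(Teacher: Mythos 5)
Your first two paragraphs are correct and follow essentially the same route as the paper: the identification of the action of $G_{R_9}$ on $\mathrm{MW}(\mathcal{E}_{R_9})$ and on the $9$-cycle, the contraction to $\mathbb{P}^1\times\mathbb{P}^1$ through the Galois-stable blocks $\{\mathcal{O},t_1,t_2\}$, $\{C_0,C_3,C_6\}$, $\{C_2,C_7\}$, and the descent of a geometric blow-down to $\mathbb{P}^2$ when $G_{R_9}$ is trivial all match the paper's argument.

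The genuine gap is the one you flag yourself: you never prove the non-existence of a $k$-morphism to $\mathbb{P}^2$ when $G_{R_9}=\mathbb{Z}/2\mathbb{Z}$, you only reduce it to showing that no geometric line class is $G_{R_9}$-fixed. Be aware that the paper does not close this gap either (its proof only rules out the blow-downs that factor through the degree-$6$ del Pezzo surface $R''$, where one is forced to contract one of the non-stable triples $\{C_1,C_4,C_7\}$, $\{C_2,C_5,C_8\}$), and in fact the step you could not complete cannot be completed, because the assertion fails. The nine curves $\mathcal{O},\,t_1,\,t_2,\,C_1,\,C_2,\,C_3,\,C_6,\,C_7,\,C_8$ form a set stable under $g\colon C_i\mapsto C_{9-i}$, $t_1\leftrightarrow t_2$, consisting of the isolated $(-1)$-curve $\mathcal{O}$ and the two conjugate chains $C_1$--$C_2$--$C_3$--$t_1$ and $C_8$--$C_7$--$C_6$--$t_2$ with self-intersections $(-2,-2,-2,-1)$; each chain is the exceptional configuration of four infinitely near blow-ups, so the nine curves can be contracted over $k$ (in the order $\{\mathcal{O},t_1,t_2\}$, then $\{C_3,C_6\}$, $\{C_2,C_7\}$, $\{C_1,C_8\}$) to a smooth surface with Picard rank $1$, $K^2=9$ and a $k$-point, i.e.\ to $\mathbb{P}^2$. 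Equivalently, $3\ell=F+\mathcal{O}+4t_1+4t_2+C_1+2C_2+3C_3+3C_6+2C_7+C_8$ defines a $G_{R_9}$-invariant integral nef class with $\ell^2=1$ and $\ell\cdot K_{R_9}=-3$ vanishing exactly on those nine curves; the images of $C_0,C_4,C_5$ are the line through three collinear flexes of a plane cubic (one rational, two conjugate) and the flex tangents at the two conjugate ones. So what your analysis (and the paper's) actually establishes is only that the particular blow-down through $R''$ does not descend to $k$, not that no blow-down of $R_9$ to $\mathbb{P}^2$ over $k$ exists; the final sentence of the statement needs to be corrected accordingly rather than proved.
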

\proof 
Let $F$ be the class of a fiber of $\mathcal{E}_{R_9}$. Since $F$ is preserved by $G_{R_9}$, for each $g\in G_{R_9}$ we have $1=t_1F=g(t_1)g(F)$ and thus $g(t_1)$ is necessarily a section. It is different from $\mathcal{O}$ as the latter is fixed by $G_{R_9}$. Hence either $g(t_1)=t_1$ or $g(t_1)=t_2$. 
We begin with $g(t_1)=t_1$. In that case $g(t_2)=t_2$ and since $t_1$ intersects the fiber component $C_3$ and $t_2$ intersects $C_6$, we have $g(C_3)=C_3$ and $g(C_6)=C_6$. Since each other fiber component intersects one among $C_0,C_3$ and $C_6$, it is also fixed by $g$. Hence $G_{R_9}$ is trivial.
We  pass to the case $g(t_1)=t_2$. This implies that $g(C_3)=C_6$. The fiber components intersecting $C_3$ and $C_6$ must be switched by $g$ and, a posteriori, so must $C_1$ and $C_8$. We have $g(C_i)=C_{9-i}$. Hence, in that case, $G_{R_9}$ has order 2 and is generated by the elliptic involution.

Let us now consider the contraction of the $(-1)$-curves on $R_9$, i.e., the sections $\mathcal{O}$, $t_1$ and $t_2$. The reader might find helpful to follow Figure \ref{blowdown I9} in parallel. First one contracts the three sections, which are all disjoint and form either 3 or 2 orbits for the action of $G_{R_9}$, depending on whether $G_{R_9}$ is $\{ id\}$ or $\Z/2\Z$. Let us denote by $\beta_1:R_9\ra R'$ this contraction. The curves $\beta_1(C_0)$, $\beta_1(C_3)$, $\beta_1(C_6)$ are disjoint $(-1)$-curves of $R'$ and form 2 or 3 orbits with respect to $G_{R_9}$.  Secondly, we call $\beta_2:R'\ra R''$ the contraction of these three curves. The curves $\beta_2(\beta_1(C_i))$ for $i=1,2,4,5,7,8$ are $(-1)$-curves on $R''$. The curves $\beta_2(\beta_1(C_2))$ and $\beta_2(\beta_1(C_7))$ form 1 or 2 orbits with respect to $G_{R_9}$. 
Hence they can be contracted in order to obtain a minimal surface. Let us denote by $\beta_3:R''\ra R'''$ this contraction. Then  $R'''$ is $\mathbb{P}^1\times\mathbb{P}^1$, the curves $\beta_3(\beta_2(\beta_1(C_1)))$ and $\beta_3(\beta_2(\beta_1(C_5)))$ are curves of bidegree $(1,0)$ in $\mathbb{P}^1\times\mathbb{P}^1$ and the curves $\beta_3(\beta_2(\beta_1(C_4)))$ and $\beta_3(\beta_2(\beta_1(C_8)))$ are curves of bidegree $(0,1)$. Hence the image of the reducible fiber $I_9$ is a reducible curve of bidegree $(2,2)$ in $\mathbb{P}^1\times\mathbb{P}^1$. 
There is another possible choice of curves to contract on $R''$ in order to obtain a minimal surface. If $G_{R_9}=\{ id \}$, one can contract the curves $\beta_2(\beta_1(C_1))$, $\beta_2(\beta_1(C_4))$, $\beta_2(\beta_1(C_7))$ obtaining $\mathbb{P}^2$ as minimal surface. But these curves do not form an orbit for $G_{R_9}$ if $G_{R_9}=\Z/2\Z$, hence this contraction is allowed only if $G_{R_9}$ is trivial. 
\endproof

Figure \ref{blowdown I9} shows the contractions $\beta_1,\;\beta_2$, and $\beta_3$ of the fiber $I_9$ as in the proof of Proposition \ref{prop: Galois of R9}. Black lines represent curves defined over $k$. Lines of the same color (not black) represent curves that are conjugate under the action of $G_{R_9}$ if $G_{R_9}\neq\{id\}$; of course if $G_{R_9}=\{id\}$ then all curves are defined over $k$. Dotted lines represent $(-1)$-curves, lines with label $0$ represent curves with self-intersection 0, and all other lines represent $(-2)$-curves.  
\begin{center}
\begin{figure}[!h]
\begin{tikzpicture}[scale = 1]
  \foreach \x /\alph /\name in {270/a/$ $, 310/b/$ $, 350/c/$ $,  30/d/$ $,  70/e/$ $,  110/f/$ $, 150/g/$ $, 190/h/$ $, 230/i/$ $}{
  \node[circle,inner sep=0,minimum size=0.0pt] (\alph) at (\x:1.5cm) {\name}; }
\node [label={[shift={(0.2,-0.6)}]\small{$C_8$}}] at ($(f)!0.5!(g)$) {};
\node [label={[shift={(0,-0.6)}]\small{$C_0$}}] at ($(e)!0.3!(f)$) {};
\node [label={[shift={(-0.2,-0.6)}]\small{$C_1$}}] at ($(d)!0.5!(e)$) {};
\node [label={[shift={(-0.3,-0.3)}]\small{$C_2$}}] at ($(c)!0.5!(d)$) {};
\node [label={[shift={(-0.2,-0.2)}]\small{$C_3$}}] at ($(b)!0.7!(c)$) {};
\node [label={[shift={(0,0)}]\small{$C_4$}}] at ($(b)!0.5!(a)$) {};
\node [label={[shift={(0,0)}]\small{$C_5$}}] at ($(i)!0.5!(a)$) {};
\node [label={[shift={(0.2,-0.2)}]\small{$C_6$}}] at ($(h)!0.3!(i)$) {};
\node [label={[shift={(0.3,-0.3)}]\small{$C_7$}}] at ($(g)!0.5!(h)$) {};

\node at ($(a)!1.3!($(e)!0.5!(f)$)$) {$\mathcal{O}$};
\node at ($(g)!1.3!($(b)!0.5!(c)$)$) {$t_1$};
\node at ($(d)!1.3!($(h)!0.5!(i)$)$) {$t_2$};

 \draw ($(a)!-0.3!(b)$) -- ($(a)!1.3!(b)$)[line width =1.2pt,YellowGreen!100];
 \draw ($(b)!-0.3!(c)$) -- ($(b)!1.3!(c)$)[line width =1.2pt,BlueGreen!100];
 \draw ($(c)!-0.3!(d)$) -- ($(c)!1.3!(d)$)[line width =1.2pt,Blue!80];
 \draw ($(d)!-0.3!(e)$) -- ($(d)!1.3!(e)$)[line width =1.2pt,blue!20];
 \draw ($(e)!-0.3!(f)$) -- ($(e)!1.3!(f)$)[line width =1.2pt];
 \draw ($(f)!-0.3!(g)$) -- ($(f)!1.3!(g)$)[line width =1.2pt,blue!20];
 \draw ($(g)!-0.3!(h)$) -- ($(g)!1.3!(h)$)[line width =1.2pt,Blue!80];
 \draw ($(h)!-0.3!(i)$) -- ($(h)!1.3!(i)$)[line width =1.2pt,BlueGreen!100];
 \draw ($(i)!-0.3!(a)$) -- ($(i)!1.3!(a)$)[line width =1.2pt,YellowGreen!100];
 
 \draw ($(a)!0.8!($(e)!0.5!(f)$)$) -- ($(a)!1.2!($(e)!0.5!(f)$)$)[line width =1.2pt,dashed];
 \draw ($(g)!0.8!($(b)!0.5!(c)$)$) -- ($(g)!1.2!($(b)!0.5!(c)$)$)[blue,line width =1.2pt,dashed];
 \draw ($(d)!0.8!($(h)!0.5!(i)$)$) -- ($(d)!1.2!($(h)!0.5!(i)$)$)[blue,line width =1.2pt,dashed];

  \foreach \x /\alph /\name in {270/j/$ $, 310/k/$ $, 350/l/$ $,  30/m/$ $,  70/n/$ $,  110/o/$ $, 150/p/$ $, 190/q/$ $, 230/r/$ $}{
  \node[xshift=7cm,circle,inner sep=0,minimum size=0.0pt] (\alph) at (\x:1.5cm) {\name}; }
\node [label={[shift={(0.2,-0.6)}]\small{$C_8$}}] at ($(o)!0.5!(p)$) {};
\node [label={[shift={(0,-0.6)}]\small{$C_0$}}] at ($(n)!0.5!(o)$) {};
\node [label={[shift={(-0.2,-0.6)}]\small{$C_1$}}] at ($(m)!0.5!(n)$) {};
\node [label={[shift={(-0.3,-0.4)}]\small{$C_2$}}] at ($(l)!0.5!(m)$) {};
\node [label={[shift={(-0.3,-0.2)}]\small{$C_3$}}] at ($(k)!0.5!(l)$) {};
\node [label={[shift={(0,0)}]\small{$C_4$}}] at ($(k)!0.5!(j)$) {};
\node [label={[shift={(0,0)}]\small{$C_5$}}] at ($(r)!0.5!(j)$) {};
\node [label={[shift={(0.3,-0.2)}]\small{$C_6$}}] at ($(q)!0.5!(r)$) {};
\node [label={[shift={(0.3,-0.4)}]\small{$C_7$}}] at ($(p)!0.5!(q)$) {};

 \draw ($(j)!-0.3!(k)$) -- ($(j)!1.3!(k)$)[line width =1.2pt,YellowGreen!100];
 \draw ($(k)!-0.3!(l)$) -- ($(k)!1.3!(l)$)[line width =1.2pt,BlueGreen!100,dashed];
 \draw ($(l)!-0.3!(m)$) -- ($(l)!1.3!(m)$)[line width =1.2pt,Blue!80];
 \draw ($(m)!-0.3!(n)$) -- ($(m)!1.3!(n)$)[line width =1.2pt,blue!20];
 \draw ($(n)!-0.3!(o)$) -- ($(n)!1.3!(o)$)[line width =1.2pt,dashed];
 \draw ($(o)!-0.3!(p)$) -- ($(o)!1.3!(p)$)[line width =1.2pt,blue!20];
 \draw ($(p)!-0.3!(q)$) -- ($(p)!1.3!(q)$)[line width =1.2pt,Blue!80];
 \draw ($(q)!-0.3!(r)$) -- ($(q)!1.3!(r)$)[line width =1.2pt,BlueGreen!100,dashed];
 \draw ($(r)!-0.3!(j)$) -- ($(r)!1.3!(j)$)[line width =1.2pt,YellowGreen!100];
 
  \foreach \x /\alph /\name in {90/s/$ $, 150/t/$ $, 210/u/$ $,  270/v/$ $,  330/w/$ $,  30/x/$ $}{
  \node[xshift=3.5cm,yshift=-4.7cm,circle,inner sep=0,minimum size=0.0pt] (\alph) at (\x:1.5cm) {\name}; }

\node [label={[shift={(0.2,-0.6)}]\small{$C_8$}}] at ($(s)!0.5!(t)$) {};
\node [label={[shift={(-0.2,-0.6)}]\small{$C_1$}}] at ($(x)!0.5!(s)$) {};
\node [label={[shift={(-0.3,-0.3)}]\small{$C_2$}}] at ($(w)!0.5!(x)$) {};
\node [label={[shift={(0,0)}]\small{$C_4$}}] at ($(v)!0.5!(w)$) {};
\node [label={[shift={(0,0)}]\small{$C_5$}}] at ($(u)!0.5!(v)$) {};
\node [label={[shift={(0.3,-0.3)}]\small{$C_7$}}] at ($(t)!0.5!(u)$) {};

 \draw ($(s)!-0.3!(t)$) -- ($(s)!1.3!(t)$)[line width=1.2pt,blue!20,dashed];
 \draw ($(t)!-0.3!(u)$) -- ($(t)!1.3!(u)$)[line width=1.2pt,Blue!80,dashed];
 \draw ($(u)!-0.3!(v)$) -- ($(u)!1.3!(v)$)[line width=1.2pt,YellowGreen!100,dashed];
 \draw ($(v)!-0.3!(w)$) -- ($(v)!1.3!(w)$)[line width=1.2pt,YellowGreen!100,dashed];
 \draw ($(w)!-0.3!(x)$) -- ($(w)!1.3!(x)$)[line width=1.2pt,Blue!80,dashed];
 \draw ($(x)!-0.3!(s)$) -- ($(x)!1.3!(s)$)[line width=1.2pt,Blue!20,dashed]; 
 
\foreach \x /\alph /\name in {90/y/$ $, 180/z/$ $, 270/aa/$ $,  360/bb/$ $}{
  \node[xshift=0cm,yshift=-9.2cm,circle,inner sep=0,minimum size=0.0pt] (\alph) at (\x:1.5cm) {\name}; }
\node [label={[shift={(0.2,-0.6)}]\small{$C_8$}}] at ($(y)!0.5!(z)$) {};
\node [label={[shift={(-0.2,-0.1)}]\small{$0$}}] at ($(y)!0.5!(z)$) {};
\node [label={[shift={(-0.2,-0.6)}]\small{$C_1$}}] at ($(bb)!0.5!(y)$) {};
\node [label={[shift={(0.2,-0.1)}]\small{$0$}}] at ($(y)!0.5!(bb)$) {};
\node [label={[shift={(0.3,-0.5)}]\small{$0$}}] at ($(aa)!0.5!(bb)$) {};
\node [label={[shift={(-0.3,-0.5)}]\small{$0$}}] at ($(z)!0.5!(aa)$) {};
\node [label={[shift={(-0.2,0)}]\small{$C_4$}}] at ($(aa)!0.5!(bb)$) {};
\node [label={[shift={(0.2,0)}]\small{$C_5$}}] at ($(z)!0.5!(aa)$) {};

 \draw ($(y)!-0.3!(z)$) -- ($(y)!1.3!(z)$)[line width=1.2pt,blue!20];
 \draw ($(z)!-0.3!(aa)$) -- ($(z)!1.3!(aa)$)[line width=1.2pt,YellowGreen!100];
 \draw ($(aa)!-0.3!(bb)$) -- ($(aa)!1.3!(bb)$)[line width=1.2pt,YellowGreen!100];
 \draw ($(bb)!-0.3!(y)$) -- ($(bb)!1.3!(y)$)[line width=1.2pt,blue!20];

\foreach \x /\alph /\name in {180/cc/$ $, 300/dd/$ $, 60/ee/$ $}{
  \node[xshift=7cm,yshift=-11cm,circle,inner sep=0,minimum size=0.0pt] (\alph) at (\x:1.5cm) {\name}; }
\node [label={[shift={(0.2,-0.2)}]\small{$C_5$}}] at ($(cc)!0.5!(dd)$) {};
\node [label={[shift={(0,-0.6)}]\small{$0$}}] at ($(cc)!0.5!(dd)$) {};
\node [label={[shift={(0.2,-0.6)}]\small{$C_8$}}] at ($(ee)!0.5!(cc)$) {};
\node [label={[shift={(0,-0.1)}]\small{$0$}}] at ($(cc)!0.5!(ee)$) {};
\node [label={[shift={(0.3,-0.3)}]\small{$0$}}] at ($(dd)!0.5!(ee)$) {};
\node [label={[shift={(-0.3,-0.3)}]\small{$C_2$}}] at ($(dd)!0.5!(ee)$) {};

 \draw ($(cc)!-0.3!(dd)$) -- ($(cc)!1.3!(dd)$)[line width=1.2pt,YellowGreen!100];
 \draw ($(dd)!-0.3!(ee)$) -- ($(dd)!1.3!(ee)$)[line width=1.2pt,Blue!80];
 \draw ($(ee)!-0.3!(cc)$) -- ($(ee)!1.3!(cc)$)[line width=1.2pt,blue!20];  
 
\draw (2.5,0) -- (4.5,0) [->];
\node [label={[shift={(0,0)}]$\beta_1$}] at (3.5,0){};
\draw (5.7,-1.66) -- (4.7,-2.95) [->];
\node [label={[shift={(-0.2,0)}]$\beta_2$}] at (5.2,-2.31){};
\draw (2.3,-6.59) -- (1.3,-7.88) [->];
\node [label={[shift={(-0.2,0)}]$\beta_3$}] at (1.8,-7.24){};
\draw (4.7,-6.59) -- (6.2,-9.7) [->];
\node [label={[shift={(2,0)}]only if $G_{R_9}=\{id\}$}] at (5,-8){};

\node [label={[shift={(0,0)}]4 lines in $\mathbb{P}^1\times\mathbb{P}^1$}] at (0,-12.5){};
\node [label={[shift={(0,0)}]3 lines in $\mathbb{P}^2$}] at (7,-14){};
\end{tikzpicture}
\caption{Two ways to contract the fiber $I_9$}
		\label{blowdown I9}
\end{figure}
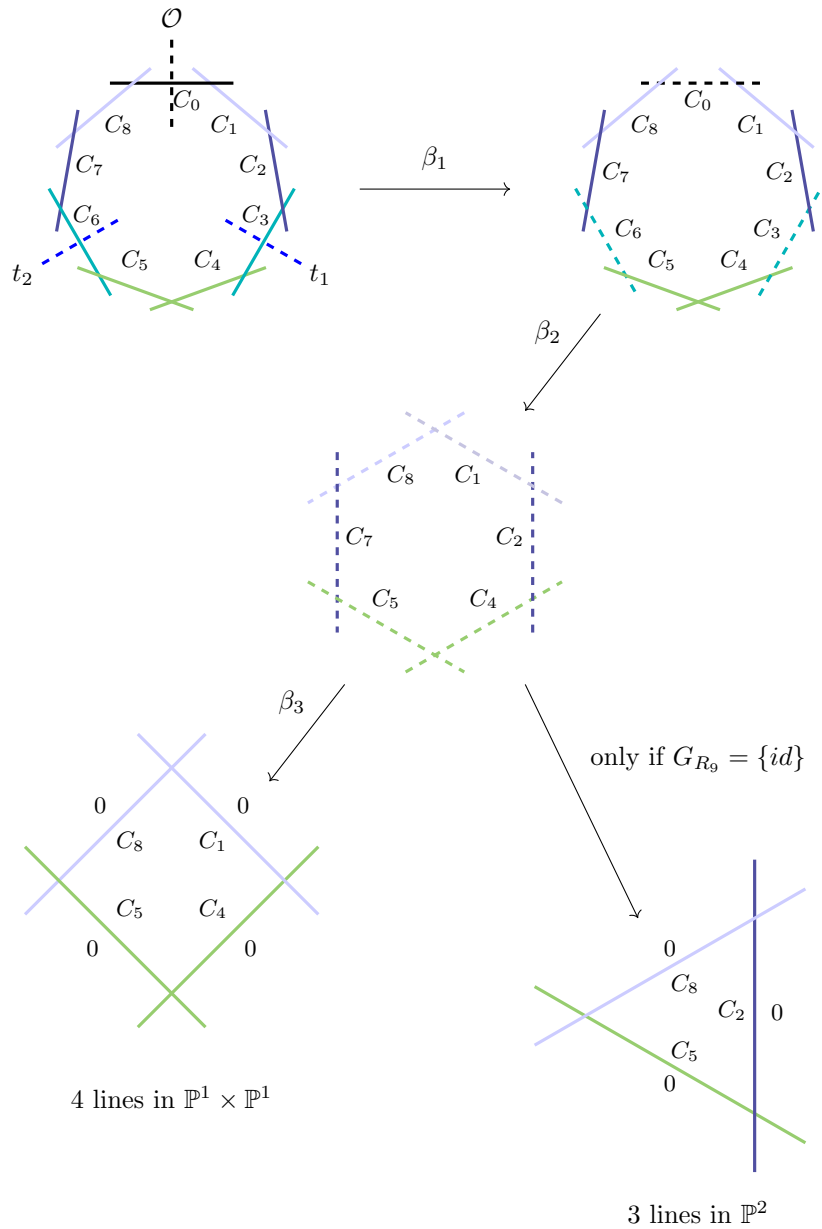
\end{center}

\begin{rem}{\rm If one contracts $R_9$ to $\mathbb{P}^1\times\mathbb{P}^1$, the elliptic involution defined on $R_9$ induces an involution of $\mathbb{P}^1\times\mathbb{P}^1$, which is precisely the exchange of the two rulings. Indeed $\iota_{R_9}$ maps $C_1$ to $C_8$ and $C_4$ to $C_5$, so the automorphism induced by $\iota_{R_9}$ on $\mathbb{P}^1\times\mathbb{P}^1$ maps the $(1,0)$-curves $\beta_3(\beta_2(\beta_1(C_1)))$ and $\beta_3(\beta_2(\beta_1(C_5)))$ to the $(0,1)$-curves $\beta_3(\beta_2(\beta_1(C_8)))$ and $\beta_3(\beta_2(\beta_1(C_4)))$. }\end{rem}

\begin{rem}\label{twomodelsR9}{\rm Over the complex field all the rational elliptic fibrations can be contracted to a pencil of cubics in $\mathbb{P}^2$ and for each extremal rational elliptic fibration, the equation of an associated pencil is known \cite[Proof of Theorem 5.6.2]{CoDol}. In particular, an equation of a pencil of cubics associated to the surface $R_9$ in $\mathbb{P}^2_{(z_0:z_1:z_2)}$ is given by $$\mathcal{P}_9:=(z_0z_1z_2)+t(z_0^2z_1+z_1^2z_2+z_2^2z_0).$$ 
		 The base points of $\mathcal{P}_9$ are $(1:0:0)$, $(0:1:0)$ and $(0:0:1)$, each with multiplicity~3. After blowing up these points one obtains a rational elliptic surface, with a reducible fiber over $t=0$.
	A birational map from $\mathbb{P}^2$ to $\mathbb{P}^1\times \mathbb{P}^1$ is given by the blow-up of two points and the contraction of the line through these points. For example the maps 
	$$\alpha_1:\mathbb{P}^1_{(x_0:x_1)}\times\mathbb{P}^1_{(y_0:y_1)}\ra\mathbb{P}^2, \ \alpha_1((x_0:x_1),(y_0:y_1))=(x_0y_0:x_0y_1:x_1y_0)\mbox{ and }$$
	
	$$\alpha_2:\mathbb{P}^2_{(z_0:z_1:z_2)}\ra\mathbb{P}^1\times\mathbb{P}^1,\   \alpha_2(z_0:z_1:z_2)=((z_0:z_2),(z_0:z_1))$$ 
	are birational inverse maps. They correspond to blowing up the points $(0:1:0)$ and $(0:0:1)$ in $\mathbb{P}^2$ and to contracting the line $z_0=0$. 
	
	We observe that the points $(0:1:0)$ and $(0:0:1)$ are base points of the pencil $\mathcal{P}_9$. The birational image of this pencil is a bidegree $(2,2)$ pencil in $\mathbb{P}^1\times\mathbb{P}^1$, given by
	\begin{equation}\label{pencil in P1xP1 R9} (x_0x_1y_0y_1)+t(x_0^2y_0y_1+x_0x_1y_1^2+x_1^2y_0^2),\end{equation} which still corresponds to $R_9$.
We conclude that the pencil \eqref{pencil in P1xP1 R9}, considered as pencil of curves of bidegree $(2,2)$ over a certain field $k$, defines the rational elliptic surface $R_9$ over $k$. We already observed that the Galois action on $R_9$ corresponds to an involution exchanging the rulings of $\mathbb{P}^1\times\mathbb{P}^1$ and indeed, with the chosen coordinates, it is  $((x_0:x_1),(y_0:y_1))\mapsto ((y_1:y_0),(x_1:x_0))$.}\end{rem}

The following example illustrates the two different Galois actions that occur in Proposition \ref{prop: Galois of R9}.

\begin{example}
In Remark \ref{twomodelsR9}, we saw that the pencil of cubics given by $\mathcal{P}_9=(z_0z_1z_2) + t(z^2_0z_1 + z^2_1z_2 + z^2_2z_0)$ gives rise to an $R_9$ surface. A Weierstrass equation for this surface is $$y^2 = x^3 -(432t^3 + 10368)xt + 3456t^6 +  124416t^3 + 746496,$$ and the Mordell--Weil group consists of three sections defined over $\mathbb{Q}$, which are given by $[0,1,0]$ and $[12t^2,\pm 864,1]$. We conclude from \ref{prop: Galois of R9} that in this case we have $G_{R_9}=\{id\}$.\\
Another example of an $R_9$ surface is given by the Weierstrass equation $$y^2=x^3-3(t^3+24)xt+2(t^6+36t^3+216),$$ which  has Mordell--Weil group given by the section $[0,1,0]$ and the two sections $[t^2-1,\pm3\sqrt{3}t,1]$ \cite[Table 5.3]{MiPe}. So the Mordell--Weil group of this surface is trivial over $\mathbb{Q}$, and defined over the quadratic extension $\mathbb{Q}(\sqrt{3})$. We conclude from Proposition \ref{prop: Galois of R9} that in this case we have $G_{R_9}=\mathbb{Z}/2\mathbb{Z}$, and the surface can not be contracted to $\mathbb{P}^2$.
\end{example}

\subsection{The K3 surface \texorpdfstring{$X_9$}{X9}}\hspace{0pt}

\vspace{5pt}

Let $X_9$ be a K3 surface obtained by a generic base change of order 2 on the rational elliptic surface $R_9$  as described in Section \ref{sec: setting}.  

Then the elliptic fibration $\mathcal{E}_{R_9}:R_9\ra\mathbb{P}^1$ induces an elliptic fibration $\mathcal{E}_{X_9}:X_9\ra\mathbb{P}^1$ on $X_9$. We denote by $\iota_{X_9}$ the elliptic involution on $\mathcal{E}_{X_9}$. We denote by $\tau_{9}$ the cover involution of $\pi:X_9\ra R_9$.

By definition the fibration $\mathcal{E}_9$ is of type 2 with respect to $\tau$. So, by Theorem \ref{theo: types and field of definition}, the field of definition of the elliptic fibration and of a section of it is $k$.

Nevertheless there could be other sections or components of some reducible fibers which are not defined over $k$. 

In what follows we describe the Galois group $G_{\mathcal{E}_{X_9}, \mathrm{MW}}$, i.e., the Galois group 
of the field extension $k_{\mathcal{E}_{X_9}}/k$ 
over which all components of reducible fibers and sections of the fibration $\mathcal{E}_{X_9}$ are defined. 

\begin{proposition}\label{prop: GaloisXi}
	The Galois group $G_{\mathcal{E}_{X_9}, \mathrm{MW}}$ of the elliptic fibration $\mathcal{E}_{X_9}:X_9\ra\mathbb{P}^1$ is as follows
	
	More precisely, \begin{itemize} 
	\item $G_{\mathcal{E}_{X_9}, \mathrm{MW}}\simeq\left(\Z/2\Z\right)^2$ if, and only if, $G_{R_9}\simeq\Z/2\Z$ and the branch fibers of $\pi$ are not defined over $k_{R_9}$,
	\item $G_{\mathcal{E}_{X_9}, \mathrm{MW}}\simeq\Z/2\Z$ if, and only if, $G_{R_9}\simeq\Z/2\Z$ and the branch fibers of $\pi$ are defined over $k_{R_9}$,
		\item $G_{\mathcal{E}_{X_9}, \mathrm{MW}}\simeq\Z/2\Z$ if, and only if, $G_{R_9}=\{ id \}$ and the branch fibers of $\pi$ are not defined over $k_{R_9}$,
		\item	$G_{\mathcal{E}_{X_9}, \mathrm{MW}}=\{ id \}$ if, and only if, $G_{R_9}=\{id\}$ and the branch fibers of $\pi$ are defined over $k_{R_9}$.
\end{itemize}\end{proposition}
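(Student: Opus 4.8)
The plan is to reduce the computation of $G_{\mathcal{E}_{X_9},\mathrm{MW}}$ to the Galois action on the components of the two reducible fibers of $\mathcal{E}_{X_9}$ together with its torsion sections, and then to read off $k_{\mathcal{E}_{X_9}}$ as a compositum of at most two quadratic fields. First I would record the shape of $\mathcal{E}_{X_9}$: since the unique reducible fiber $I_9$ of $\mathcal{E}_{R_9}$ sits over a point $t_0\in\PP^1_k$ which is not a branch point, its preimage under the base change consists of two disjoint $I_9$ fibers, lying over the two points of $C$ above $t_0$; the three $I_1$'s double to six irreducible $I_1$'s, and the fibers over the two branch points stay smooth. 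Thus $\mathcal{E}_{X_9}$ has exactly two reducible fibers, both of type $I_9$, with components I denote $C_i^{(1)},C_i^{(2)}$ ($i\in\Z/9\Z$), and its Mordell--Weil group contains the images $\mathcal{O}_{X_9},T_1,T_2$ of the three torsion sections of $R_9$ (irreducible, by Lemma~\ref{lemma: sections on R and induced curves on X}).

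For the upper bound I would invoke Lemma~\ref{generators_NS(X)}: $\NS(X_9)$ is generated over $k_{R,\tau}$, so every component $C_i^{(j)}$ and every section of $\mathcal{E}_{X_9}$ is defined over $k_{R,\tau}$. Hence $k_{\mathcal{E}_{X_9}}\subseteq k_{R,\tau}$ and $G_{\mathcal{E}_{X_9},\mathrm{MW}}$ is a quotient of $\Gal(k_{R,\tau}/k)$. This already bounds everything by a biquadratic extension, and in particular shows that any possible extra torsion section (e.g.\ one meeting the two $I_9$'s in components of indices $3$ and $6$) cannot enlarge $k_{\mathcal{E}_{X_9}}$ beyond $k_{R,\tau}$.

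For the matching lower bound I would exhibit the Galois action on the $18$ components as generated by two commuting involutions: the component flip $a\colon C_i^{(j)}\mapsto C_{9-i}^{(j)}$, which is the lift of the $G_{R_9}$-action on the $I_9$ of $R_9$ (Proposition~\ref{prop: Galois of R9}) and which separates $C_i^{(j)}$ from $C_{9-i}^{(j)}$ precisely over $k_{R_9}$; and the fiber swap $b\colon C_i^{(j)}\mapsto C_i^{(3-j)}$ exchanging the two $I_9$'s, which is the action of the generator $\sigma_1$ of $\Gal(k_\tau/k)$ (i.e.\ it records whether the two points of $C$ above $t_0$ are Galois-conjugate) and which separates the two sheets precisely over $k_\tau$. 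Since $a$ and $b$ commute and act faithfully and independently on the set of components, defining all $18$ of them individually requires exactly the compositum $k_{R,\tau}=k_{R_9}\cdot k_\tau$; therefore $k_{\mathcal{E}_{X_9}}=k_{R,\tau}$ and $G_{\mathcal{E}_{X_9},\mathrm{MW}}=\Gal(k_{R,\tau}/k)$.

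The conclusion is then a short case analysis on $[k_{R,\tau}:k]=[k_{R_9}k_\tau:k]$, after translating the hypothesis ``the branch fibers of $\pi$ are defined over $k_{R_9}$'' into the condition $k_\tau\subseteq k_{R_9}$ on the field over which the two reducible fibers of $\mathcal{E}_{X_9}$ above $t_0$ are defined. If $G_{R_9}\simeq\Z/2\Z$ (so $k_{R_9}$ is quadratic), then $k_\tau\not\subseteq k_{R_9}$ forces $k_{R,\tau}$ biquadratic and gives $(\Z/2\Z)^2$, while $k_\tau\subseteq k_{R_9}$ gives $k_{R,\tau}=k_{R_9}$ and $\Z/2\Z$; if $G_{R_9}=\{id\}$ (so $k_{R_9}=k$), then $k_\tau\neq k$ gives $k_{R,\tau}=k_\tau$ and $\Z/2\Z$, whereas $k_\tau=k$ gives $k_{R,\tau}=k$ and the trivial group. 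I expect the only genuinely delicate step to be the field bookkeeping of the previous sentence, namely pinning down that the exchange of the two $I_9$ fibers is controlled exactly by $k_\tau$ and that this is what the branch-fiber condition records, together with the verification (already guaranteed by Lemma~\ref{generators_NS(X)}) that no torsion section can force an extension larger than $k_{R,\tau}$.
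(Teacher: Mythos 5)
Your proposal is correct and takes essentially the same route as the paper, whose entire proof is a two-sentence reduction to whether $k_{R_9}$ and $k_{\tau_9}$ are linearly disjoint, "depending on the branch locus". Your argument fills in the details that the paper leaves implicit or defers elsewhere --- in particular the identification $k_{\mathcal{E}_{X_9},\mathrm{MW}}=k_{R,\tau_9}$ via the two commuting involutions (component flip and sheet swap), which the paper only establishes later in Corollary \ref{cor: kE9=kRt} using Lemma \ref{generators_NS(X)} and Proposition \ref{latticeA8} --- and you correctly flag the translation of "branch fibers defined over $k_{R_9}$" into $k_{\tau_9}\subseteq k_{R_9}$ as the one point requiring care, a point the paper glosses over in the same way.
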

\begin{proof}
 This follows from a simple analysis of whether the extensions $k_{R_9}$ and $k_{\tau_9}$ are linearly disjoint or not. This depends of course on the branch locus of the base change map. See the discussion in Notation \ref{not: compositum}.
\end{proof}

The elliptic fibration $\mathcal{E}_{X_9}$ has two fibers of type $I_9$. Let us denote by $\Theta_i^{j}$, for $i=0,\ldots, 8$, $j=1,2$ the $i$-th component of the $j$-th fiber of type $I_9$. The sections of $\mathcal{E}_{R_9}$ induce sections of $\mathcal{E}_{X_9}$, and thus $\mathrm{MW}(\mathcal{E}_{X_9})=\{\mathcal{O}_{X_9}, T_1, T_2\}$. Thus $$\pi(\mathcal{O}_{X_9})=\mathcal{O},\ \ \pi(T_1)=t_1,\ \ \pi(T_2)=t_2,\ \ \pi(\Theta_i^{j})=C_i,\ i=0,\ldots, 8,\ j=1,2.$$
The automorphism $\tau_9$ is the cover involution of $\pi$ and thus $$\tau_9(\mathcal{O}_{X_9})=\mathcal{O}_{X_9},\ \   \tau_9(T_1)=T_1,\ \ \tau_9(T_2)=T_2, \ \ \tau_9(\Theta_i^{1})=\Theta_i^{2}, \, i=0,\ldots, 8.$$
Figure \ref{diagram X9} summarizes the above.  

\begin{center}
\begin{figure}[h]
 \begin{tikzpicture}[scale=1.2] 
\node [label= above right:$\mathcal{O}_{X_9}$, draw,circle,inner sep=0pt,minimum size=4pt] at (2.5,-0.5) {};
 \node [label=above:$T_2$, draw,circle,inner sep=0pt,minimum size=4pt] at (-1.5,-0.5) {};
 \node [label=above:$T_1$, draw,circle,inner sep=0pt,minimum size=4pt] at (6.5,-0.5) {};
 \node [label=below:$\Theta_0^2$,draw,circle,inner sep=0pt,minimum size=4pt] at (2.5,-1.5) {};
 \node [label=left:$\Theta_1^2$,draw,circle,inner sep=0pt,minimum size=4pt] at (4,-2) {};
 \node [label=left:$\Theta_2^2$,draw,circle,inner sep=0pt,minimum size=4pt] at (4,-3) {};
 \node [label=above left:$\Theta_3^2$,draw,circle,inner sep=0pt,minimum size=4pt] at (4,-4) {};
\node [label=above:$\Theta_4^2$,draw,circle,inner sep=0pt,minimum size=4pt] at (3,-4) {};
 \node [label=above:$\Theta_5^2$,draw,circle,inner sep=0pt,minimum size=4pt] at (2,-4) {};
 \node [label=above right:$\Theta_6^2$,draw,circle,inner sep=0pt,minimum size=4pt] at (1,-4) {};
 \node [label=right:$\Theta_7^2$,draw,circle,inner sep=0pt,minimum size=4pt] at (1,-3) {};
 \node [label=right:$\Theta_8^2$,draw,circle,inner sep=0pt,minimum size=4pt] at (1,-2) {};
 \node [label=above:$\Theta_0^1$,draw,circle,inner sep=0pt,minimum size=4pt] at (2.5,0.5) {};
 \node [label=left:$\Theta_1^1$,draw,circle,inner sep=0pt,minimum size=4pt] at (4,1) {};
 \node [label=left:$\Theta_2^1$,draw,circle,inner sep=0pt,minimum size=4pt] at (4,2) {};
 \node [label=below left:$\Theta_3^1$,draw,circle,inner sep=0pt,minimum size=4pt] at (4,3) {};
\node [label=below:$\Theta_4^1$,draw,circle,inner sep=0pt,minimum size=4pt] at (3,3) {};
 \node [label=below:$\Theta_5^1$,draw,circle,inner sep=0pt,minimum size=4pt] at (2,3) {};
 \node [label=below right:$\Theta_6^1$,draw,circle,inner sep=0pt,minimum size=4pt] at (1,3) {};
 \node [label=right:$\Theta_7^1$,draw,circle,inner sep=0pt,minimum size=4pt] at (1,2) {};
 \node [label=right:$\Theta_8^1$,draw,circle,inner sep=0pt,minimum size=4pt] at (1,1) {};

\path[every node/.style={font=\sffamily\small}]
     (2.5,-0.5) edge (2.5,-1.5)
     (2.5,-0.5) edge (2.5,0.5)
     (2.5,-1.5) edge (4,-2)
     (2.5,-1.5) edge (1,-2)
     (2.5,0.5)  edge (4,1)
     (2.5,0.5)  edge (1,1)
     (4,-2)		edge (4,-3)
     (4,-3)		edge (4,-4)
     (4,-4)		edge (3,-4)
     (3,-4)		edge (2,-4)
     (2,-4)		edge (1,-4)
     (1,-4)		edge (1,-3)
     (1,-3)		edge (1,-2)
     (4,1)		edge (4,2)
     (4,2)		edge (4,3)
     (4,3)		edge (3,3)
     (3,3)		edge (2,3)
     (2,3)		edge (1,3)
     (1,3)		edge (1,2)
     (1,2)		edge (1,1)
     (-1.5,-0.5)edge (1,3)
     (-1.5,-0.5)edge (1,-4)
     (6.5,-0.5)	edge (4,3) 
     (6.5,-0.5)	edge (4,-4) 
     ;
\draw (-2,-0.5) -- (7,-0.5)[dashed];

\path[<->,>=stealth',auto,every node/.style={font=\sffamily\small}](7.2,-1) edge[bend right] node [right]{$\tau_9$} (7.2,0);
\end{tikzpicture}
\caption{Reducible fibers and sections of the fibration $\mathcal{E}_{X_9}$ on $X_9$}
		\label{diagram X9}
\end{figure}
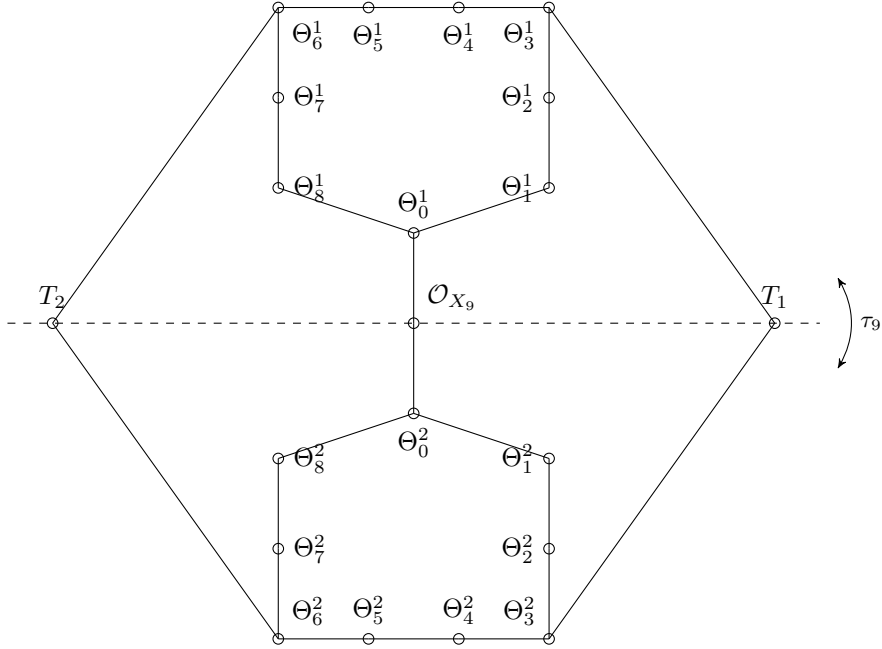 
\end{center} 
 
\begin{proposition}\label{latticeA8} The N\'eron--Severi group of $X_9$ has rank 18, signature $(1,17)$, discriminant group $\Z/9\Z$ and discriminant form is the opposite to the one of $A_8$. The transcendental lattice of $X_9$ is the unique (up to isometries) even lattice with signature $(2,2)$, discriminant group $\Z/9\Z$ and discriminant form equal to the one of $A_8$.\end{proposition}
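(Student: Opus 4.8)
The plan is to realize $\mathrm{NS}(X_9)$ explicitly as a finite-index overlattice of the trivial lattice of the induced fibration $\mathcal{E}_{X_9}$, read off its invariants, and then recover the transcendental lattice as the orthogonal complement of $\mathrm{NS}(X_9)$ inside the K3 lattice $\Lambda_{\mathrm{K3}}=U^{3}\oplus E_8(-1)^{2}$.

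First I would pin down the rank and signature. The fibration $\mathcal{E}_{X_9}$ has two fibers of type $I_9$ and Mordell--Weil group $\mathrm{MW}(\mathcal{E}_{X_9})=\Z/3\Z$, of rank $0$; hence by the Shioda--Tate formula the classes of the zero section, of a smooth fiber and of the non-identity components of the two $I_9$ fibers span a sublattice of rank $2+8+8=18$. As $X_9$ is a generic member of a $2$-dimensional family, Lemma \ref{generators_NS(X)} shows these classes already generate $\mathrm{NS}(X_9)$, so $\rho(X_9)=18$. The signature $(1,17)$ is then forced by the Hodge index theorem, which gives exactly one positive eigenvalue on the N\'eron--Severi lattice of a projective surface.

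The heart of the proof is the discriminant form. The trivial lattice is $\mathrm{Triv}=U\oplus A_8\oplus A_8$, where $A_8$ denotes the negative definite root lattice spanned by the $(-2)$-curves in an $I_9$ fiber; its discriminant group is $\Z/9\Z\oplus\Z/9\Z$ with discriminant form $0\perp q_{A_8}\perp q_{A_8}$, and $q_{A_8}$ takes the value $-\tfrac{8k^{2}}{9}\bmod 2\Z$ on $k$ times a generator. Since the torsion section $T_1$ meets the components $\Theta_3^{1}$ and $\Theta_3^{2}$ (Figure \ref{diagram X9}), the Shioda--Tate sequence identifies $\mathrm{NS}(X_9)$ with the index-$3$ overlattice of $\mathrm{Triv}$ associated with the isotropic subgroup $H\cong\Z/3\Z$ generated by the glue vector $(3,3)$ (the relative orientation of the second $I_9$ only replaces $(3,3)$ by $(3,6)$ and does not affect the outcome). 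I would then compute the orthogonal complement
\[
H^{\perp}=\{(a,b)\in\Z/9\Z\oplus\Z/9\Z : a+b\equiv 0 \bmod 3\},
\]
of order $27$; a short Smith-normal-form computation shows $H^{\perp}/H\cong\Z/9\Z$, generated by the class of $(1,2)$. Evaluating the induced form on this generator gives $-\tfrac{8}{9}(1^{2}+2^{2})\equiv\tfrac{14}{9}\pmod{2\Z}$, which is exactly a value of $-q_{A_8}$ on a generator; since a nondegenerate form on $\Z/9\Z$ is determined by its value set on generators, this yields $A_{\mathrm{NS}(X_9)}\cong\Z/9\Z$ with $q_{\mathrm{NS}(X_9)}=-q_{A_8}$ opposite to that of $A_8$, and $|\operatorname{disc}\mathrm{NS}(X_9)|=81/9=9$, as claimed.

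Finally, for the transcendental lattice I would use that $\mathrm{NS}(X_9)=H^2(X_9,\Z)\cap H^{1,1}$ is a primitive sublattice of the unimodular lattice $\Lambda_{\mathrm{K3}}$ of signature $(3,19)$. Thus $T(X_9)=\mathrm{NS}(X_9)^{\perp}$ is even of rank $4$ and signature $(2,2)$, with discriminant form $q_{T(X_9)}=-q_{\mathrm{NS}(X_9)}=q_{A_8}$, i.e. equal to that of $A_8$. Uniqueness up to isometry then follows from Nikulin's criterion \cite{Nikulin}: $T(X_9)$ is even, indefinite and satisfies $\operatorname{rank}=4\ge \ell(\Z/9\Z)+2=3$, where $\ell$ is the minimal number of generators of the discriminant group, so its genus, which is determined by its signature and discriminant form, consists of a single isometry class. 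The one step demanding genuine care is the discriminant-form computation: one must correctly read the isotropic glue subgroup off the $3$-torsion and track the quadratic form through the passage to $H^{\perp}/H$ (including the sign convention that $A_8$ is negative definite); the remaining claims are immediate from Shioda--Tate, the Hodge index theorem and Nikulin's lattice theory.
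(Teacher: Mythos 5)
Your proposal is correct, and its overall architecture coincides with the paper's: rank $18$ from the $2$-dimensionality of the family, signature from the Hodge index theorem, the discriminant form of $\mathrm{NS}(X_9)$ computed to be the opposite of $q_{A_8}$, and the transcendental lattice pinned down by Nikulin's uniqueness theorem for indefinite even lattices. The one place where you genuinely diverge is the heart of the matter, the discriminant-form computation. The paper picks the explicit classes $\mathcal{O}_{X_9}, T_1, T_2, \Theta_i^j$ as a basis, asserts that the Gram matrix has discriminant $9$, and exhibits the generator $2\bigl(\sum_i i(\Theta_i^1-\Theta_i^2)\bigr)/9$ of the discriminant group with $q=\tfrac{8}{9}$. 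You instead present $\mathrm{NS}(X_9)$ as the index-$3$ overlattice of $U\oplus A_8\oplus A_8$ glued along the isotropic class of the $3$-torsion section, and compute $H^{\perp}/H\cong\Z/9\Z$ with $q(1,2)\equiv\tfrac{14}{9}\equiv -q_{A_8}(2g)\pmod{2\Z}$; your numbers are right (including the observation that replacing the glue vector $(3,3)$ by $(3,6)$ changes nothing), and this route has the advantage of being verifiable by hand and of making transparent why the answer is the opposite of $q_{A_8}$, whereas the paper's computation is left implicit. Two small points: the uniqueness criterion $\operatorname{rank}\ge \ell(A_T)+2$ you invoke is \cite[Theorem 1.13.2]{NikulinIntQuadForms} (the integral quadratic forms paper), not \cite{Nikulin}; and when counting generators it is cleaner to say, as you do, that the rank-$18$ sublattice is spanned by the fiber class, the zero section and the $16$ non-identity fiber components, since the torsion sections contribute no rank.
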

\begin{proof}
The N\'eron--Severi group contains the 18 linearly independent classes $\mathcal{O}_{X_9}$, $T_1$, $T_2$ and $\Theta_i^{j}$, for $i=1,\ldots, 8$, $j=1,2$. Hence it has rank at least 18. On the other hand the family of $X_9$ is a two dimensional family (because of the choice of two branch fibers of the double cover $X_9\ra R_9$). So the N\'eron--Severi has rank at most 18. We conclude that the 18 classes listed before form a basis of $\mathrm{NS}(X_9)$.The intersection form and the discriminant form of $\mathrm{NS}(X_9)$ can be explicitly computed and one can check that it has discriminant 9. In particular, a generator for the discriminant group is $2\left(\sum_{i=1}^9i\left(\Theta_i^{1}-\Theta_i^{2}\right)\right)/9$ and its discriminant form is $\Z/9\Z\left(\frac{8}{9}\right)$, which is the opposite to the discriminant form of $A_8$. The discriminant form of the transcendental lattice is the opposite of the discriminant form of the N\'eron--Severi group. Hence the transcendental lattice $T_{X_9}$ is an even lattice with signature $(2,2)$ and discriminant form $\Z/9\Z\left(\frac{-8}{9}\right)$. The transcendental lattice is uniquely determined by these data by \cite[Theorem 1.13.2]{NikulinIntQuadForms}. We observe that the discriminant form of $T_{X_9}$ is the same as the one of $A_8$ and that $\rk(T_{X_9})+4=\rk(A_8)$.\end{proof}

\begin{corollary}\label{cor: kE9=kRt} The filed $k_{\mathcal{E}_9}$ coincides with $k_{R,\tau}$.\end{corollary}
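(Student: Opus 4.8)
The plan is to identify $k_{\mathcal{E}_9}$ directly as the field of definition of the curves generating $\mathrm{NS}(X_9)$, and then to prove $k_{\mathcal{E}_9}=k_{R,\tau}$ by a double inclusion. Recall from Proposition \ref{latticeA8} that $\mathrm{NS}(X_9)$ is generated by the eighteen classes $\mathcal{O}_{X_9},T_1,T_2$ and $\Theta_i^{j}$ ($i=1,\dots,8$, $j=1,2$), i.e. precisely by the sections and the fiber components of $\mathcal{E}_{X_9}$. Since these are smooth rational curves, hence rigid in their classes, a subgroup of $G_{\bar k}$ fixes each of these classes if and only if it fixes the corresponding curves, i.e. if and only if they are defined over the corresponding field (Remark \ref{lemma: type 3}). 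Thus $\Gal(\bar k/k_{\mathcal{E}_9})$ is exactly the stabilizer in $G_{\bar k}$ of this set of classes, and it suffices to compare that stabilizer with $\Gal(\bar k/k_{R,\tau})=\Gal(\bar k/k_R)\cap\Gal(\bar k/k_\tau)$.

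For the inclusion $k_{\mathcal{E}_9}\subseteq k_{R,\tau}$ I would invoke Lemma \ref{generators_NS(X)} and check concretely that each generator is defined over $k_{R,\tau}$: the section $\mathcal{O}_{X_9}$ is defined over $k$; the torsion sections $T_1=\pi^{-1}(t_1)$ and $T_2=\pi^{-1}(t_2)$ are defined over $k_R$ by Lemma \ref{lemma: sections on R and induced curves on X} applied over $k_R$ (as $t_1,t_2$ are defined over $k_R$); and each component $\Theta_i^{j}$ maps under $\pi$ to $C_i$, which is defined over $k_R$, while the two $I_9$ fibers are individually defined over $k_\tau$, so $\Theta_i^{j}$ is pinned down over the compositum $k_{R,\tau}$. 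Hence all generating classes are $\Gal(\bar k/k_{R,\tau})$-invariant.

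For the reverse inclusion $k_{R,\tau}\subseteq k_{\mathcal{E}_9}$ I would show separately that $k_R\subseteq k_{\mathcal{E}_9}$ and $k_\tau\subseteq k_{\mathcal{E}_9}$. Take $\sigma\in\Gal(\bar k/k_{\mathcal{E}_9})$, so $\sigma$ fixes every class $\mathcal{O}_{X_9},T_1,T_2,\Theta_i^{j}$. Since $\pi$ is defined over $k$ it commutes with the Galois action, whence $\sigma$ fixes $\pi(\Theta_i^{j})=C_i$ as well as $\pi(\mathcal{O}_{X_9})=\mathcal{O}$, $\pi(T_1)=t_1$, $\pi(T_2)=t_2$; as these generate $\mathrm{NS}(R_9)$ we get $\sigma\in\Gal(\bar k/k_R)$, so $k_R\subseteq k_{\mathcal{E}_9}$. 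On the other hand, the nontrivial element $\sigma_1$ of $\Gal(k_\tau/k)$ is by construction the swap of the two $I_9$ fibers of $\mathcal{E}_{X_9}$; it sends components of one $I_9$ fiber to components of the other and so cannot fix each $\Theta_i^{j}$. Consequently the element $\sigma$, which does fix each $\Theta_i^{j}$, restricts to the identity on $k_\tau$, i.e. $\sigma\in\Gal(\bar k/k_\tau)$. Combining the two gives $\Gal(\bar k/k_{\mathcal{E}_9})\subseteq\Gal(\bar k/k_R)\cap\Gal(\bar k/k_\tau)=\Gal(\bar k/k_{R,\tau})$, hence $k_{R,\tau}\subseteq k_{\mathcal{E}_9}$, and equality follows.

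The step I expect to be the most delicate is the $k_\tau$-half of the lower bound: one must make sure that the abstractly defined generator $\sigma_1$ of $\Gal(k_\tau/k)$ genuinely acts as the geometric swap of the two $I_9$ fibers on $X_9$ (equivalently, that the pair of fibers above the reducible-fiber point $t_0$ splits exactly over $k_\tau$), so that it really moves the $\Theta_i^{j}$ between the two fibers. Once this identification is granted, the index bookkeeping — whether $\sigma_1(\Theta_i^1)=\Theta_i^2$ or $\Theta_{9-i}^2$, which depends on the $k_R$-action — is irrelevant, since all that is used is that no $\Theta_i^{j}$ is fixed. As a consistency check, the resulting four possibilities for $\Gal(k_{R,\tau}/k)$ reproduce exactly the four cases of Proposition \ref{prop: GaloisXi}.
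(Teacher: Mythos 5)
Your proposal is correct and follows essentially the same route as the paper: both identify $k_{\mathcal{E}_9}$ with the field of definition of the rigid rational curves generating $\mathrm{NS}(X_9)$ (via Proposition \ref{latticeA8}) and then compare with $k_{R,\tau}$ using Lemma \ref{generators_NS(X)}. The only difference is that you spell out the reverse inclusion $k_{R,\tau}\subseteq k_{\mathcal{E}_9}$ (pushing a Galois element forward along $\pi$ to recover $k_R$, and using the swap of the two $I_9$ fibers to recover $k_{\tau}$), a step the paper's three-line proof leaves implicit.
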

	\begin{proof}
		By Proposition \ref{latticeA8} the classes of the reducible fibers and of the sections of $\mathcal{E}_{X_9}$ form a basis of $\mathrm{NS}(X)$. Each of these classes corresponds to a unique curve (since these are negative curves), which is a smooth rational curve. Hence the field where all these classes are defined coincides with the field where $\mathrm{NS}(X)$ is defined. The former is $k_{\mathcal{E}_9}$ by definition, the latter is $k_{R_9,\tau_9}$ by Lemma \ref{generators_NS(X)}.  
\end{proof}

\subsection{Classification of all the possible fibrations of the \texorpdfstring{$K3$}{K3} surface \texorpdfstring{$X_9$}{X9}}\label{sec: classification of the fibrations of X9}\hspace{0pt}

\vspace{5pt}

In order to find all elliptic fibrations on $X_9$, we use Nishiyama's method explained in \cite{Nish}. As explained in \cite[Section 6.1]{Nish}, if one is able to find a lattice $T_0$ which is negative definite, has the same discriminant form of the transcendental lattice of a K3 surface and its rank is the rank of the transcendental group plus four, then there is an operative method to classify the configuration of the reducible fibers of the elliptic fibrations on the surface. In our particular case, by Proposition \ref{latticeA8}, we put $T_0=A_8$ and in  order to classify the elliptic fibrations on $X_9$ (and in particular the lattice $W$ of each of these elliptic fibration, with the notation of \cite{Nish}) we have to find the orthogonal complements of primitive embeddings of the root lattice $A_8$ in the 24 possible lattices listed (by their root type) by Niemeier \cite[Satz 8.3]{Nie} (or \cite[Theorem 1.7]{Nish}). By \cite[Lemmas 4.1 and 4.3]{Nish} we know that $A_8$ embeds primitively uniquely, up to the action of the Weyl group, in $A_m$ for $m\geq 8$, in $D_n$ for $n\geq 9$, and in no other root lattice. The orthogonal complements of these embeddings in the 24 Niemeier lattices are then found in \cite[Corollary 4.4]{Nish}, and this determines the reducible fibers and the rank of the Mordell--Weil group for each fibration. These results are summarized in Table \ref{eq: table elliptic fibrations on X9}. Note that line 1 is the fibration $\mathcal{E}_{X_9}$.\\ Apart from the torsion part of the Mordell--Weil group, everything is found by Nishiyama's method as explained above. We compute the torsion parts in what follows.

\subsubsection{Torsion of the Mordell--Weil group for the elliptic fibrations associated to \texorpdfstring{$X_9$}{X9}}\hspace{0pt}

\vspace{5pt}

By \cite[Table 1]{Shimada}, we can immediately conclude that the torsion of the fibrations in lines 2, 3, 4, 5, 8, 9, and 12 is trivial, and the torsion part of fibrations 6, 7, 10, and 11 is either $\mathbb{Z}/2\mathbb{Z}$ or trivial.

Fibration 11 comes from the orthogonal complement of the embedding of $A_8$ in a lattice $N$ of rank 24 with root type $A_{24}$. We observe that  $N/A_{24}=\mathbb{Z}/5\mathbb{Z}$ (\cite[Satz 8.3]{Nie} or \cite[Theorem 1.7]{Nish}). By \cite[Lemma 6.6, $iii)$]{Nish}, the torsion of the elliptic fibration corresponding to this embedding of $A_8$ in $N$ has to be contained in $N/A_{24}$, so this fibration does not have a 2-torsion section and the torsion part of the Mordell--Weil group is trivial. 

Note that, in terms of the notation of our configuration of $2I_9$ (see Figure~\ref{diagram X9}), we find a fiber of type $I_{16}$ composed of the following curves on $X_9$. 
$$\Theta_0^1,\Theta_1^1,\Theta_2^1,\Theta_3^1,T_1,\Theta_3^2,\Theta_2^1,\Theta_1^2,\Theta_0^2,\Theta_8^2,\Theta_7^2,\Theta_6^2,T_2,\Theta_6^1,\Theta_7^1,\Theta_8^1.$$
Moreover, $\Theta_5^1,\Theta_4^1,\Theta_5^2,\Theta_4^2$ are sections for this fibration. Let $\Theta_4^1$ be the 0-section, then the height $h(\Theta_5^2)$ of the section $\Theta_5^2$ is $2\cdot2+0-\frac{8(16-8)}{16}=0$ \cite[Chap. 11 \S 11.8]{SS}, and therefore it is a torsion section \cite[Theorem 11.5]{SS}. Since we know that the fibration in line 11 has trivial torsion, and the fibration in line 7 is the only other one with reducible fiber of type $I_{16}$, we conclude that we found a representation of the fibration in line 7, and therefore the torsion part of the Mordell--Weil group of this fibration is $\mathbb{Z}/2\mathbb{Z}$.

Finally, we find that the torsion part of the Mordel--Weil groups of the fibrations in lines 6 and 10 are $\mathbb{Z}/2\mathbb{Z}$ in the same way as we did for line 7.  

We find the class of the fiber of the other elliptic fibrations, by giving the components of one reducible fiber in terms of the configuration of $2I_9$ (see Figure \ref{diagram X9}). The rational curves orthogonal to the class of the fiber are necessarly components of other reducible fibers, hence we list all the irreducible components of at least one reducible fiber and some components of the others fo each elliptic fibration.

\noindent$\underline{I_8^*+I_4}$\\
$\Theta_1^2,\Theta_8^2,2\Theta_0^2,2\mathcal{O}_{X_9},2\Theta_0^1,2\Theta_8^1,2\Theta_7^1,2\Theta_6^1,2\Theta_5^1,2\Theta_4^1,2\Theta_3^1,\Theta_2^1,T_1\;\;+\;\;\Theta_6^2,\Theta_5^2,\Theta_4^2,$\\

\noindent$\underline{I_2^*+I_{10}}$\\
$\Theta_8^1,\Theta_0^1,2\Theta_1^1,2\mathcal{O}_{X_9},2\Theta_0^2, \Theta_8^2,\Theta_1^2\;\;+\;\; T_2,\Theta_6^1,\Theta_5^1,\Theta_4^1,\Theta_3^1,T_1\Theta_3^2,\Theta_4^2,\Theta_5^2,\Theta_6^2,\Theta_6^1.$\\

For this configuration of $I_8^*+I_4$ we find the three sections $\Theta_7^2,\Theta_3^2$, and $\Theta_2^2$. If we set $\Theta_7^2$ as the 0-section, then $\Theta_3^2$ has height 0 and hence it is a $2$-torsion section. Since there is only one fibration with reducible fiber $I_8^*+I_4$ in our list, we conclude that this configuration represents the fibration in line 6. Hence the torsion part of the Mordell--Weil group is $\mathbb{Z}/2\mathbb{Z}$. For the fibration in line 10 we have the same reasoning, after finding the sections $\Theta_4^1,\Theta_7^2,\Theta_6^2,\Theta_2^2$, setting $\Theta_2^1$ as the 0-section and finding that $\Theta_7^2$ is a $2$-torsion section. 

\begin{table}[h]
	\begin{adjustwidth}{+0cm}{}		
		\begin{tabular}{c|c|c|c|c|c}
			$n^o$&Niemeier&embedding&roots orth.&reducible fibers&MW\\
			1& $A_8^{\oplus 3}$ & $A_8\subset A_8$ &  $A_8^{\oplus 2}$ &$2I_9 $& $\mathbb{Z}/3\mathbb{Z}$\\
			2 & $E_8 \oplus D_{16}$ & $A_8 \subset D_{16}$ & $E_8\oplus D_7$ & $II^*+I_3^*$ & $\mathbb{Z}$     \\ 
			3 &$E_7^{\oplus 2}\oplus D_{10}$ & $A_8\subset D_{10}$ & $E_7^{\oplus 2}$ & $2III^* $&$ \mathbb{Z}^2$\\ 
			4 &$E_7\oplus A_{17} $ & $A_8\subset A_{17} $&$E_7\oplus A_8 $&$ III^*+I_9$ & $\mathbb{Z}$\\ 
			5&$D_{24} $ &$ A_8\subset D_{24} $&$ D_{15} $&$ I_{11}^* $&$\mathbb{Z}$  \\ 
			6&$D_{12}^{\oplus 2} $&$ A_8\subset D_{12} $&$ D_{12}\oplus A_3  $&$ I_8^*+I_4$&$\Z/2\Z \oplus\mathbb{Z}$\\ 
			7&$D_9\oplus A_{15}$ & $A_8\subset D_9$ & $A_{15}$& $I_{16}$&$\mathbb{Z}/2\mathbb{Z}\oplus\mathbb{Z}$ \\
			8&$D_9\oplus A_{15}$ & $A_8\subset A_{15}$ & $D_9\oplus A_{6}$& $I_5^*+I_7$&$\mathbb{Z}$ \\
			9&$E_6\oplus D_7\oplus A_{11}$ & $A_8\subset A_{11}$ &$E_6\oplus D_7 \oplus A_2$ & $IV^*+I_3^*+I_3$ &$\mathbb{Z}$\\
			10&$D_6\oplus A_{9}^{\oplus 2}$ & $A_8\subset A_9$ & $D_6\oplus A_9 $&$I_2^*+I_{10} $&$\mathbb{Z}/2\mathbb{Z}\oplus \mathbb{Z}$ \\ 
			11&$A_{24} $&$ A_8\subset A_{24} $&$ A_{15} $&$ I_{16} $&$ \mathbb{Z}$ \\
			12&$A_{12}^{\oplus 2}$ & $A_8\subset A_{12}$&$ A_{12}\oplus A_3 $&$ I_{13}+I_4$&$ \mathbb{Z} $\\ 
		\end{tabular}
		\caption{Elliptic fibrations of $X_9$}
		\label{eq: table elliptic fibrations on X9} 
	\end{adjustwidth}
\end{table}

\subsection{Determining the type of each fibration of \texorpdfstring{$X_9$}{X9}}\label{sec:type fibrations X9}\hspace{0pt}

\vspace{5pt}

In what follows, we assume that the surface $R_9$ is general, i.e., its Galois group $G_{R_9}$ is not trivial. The goal of this section is to find an example, for each fibration $\eta$ in Table \ref{eq: table elliptic fibrations on X9}, and to determine for each example the following: \begin{itemize}
\item[a)] The type with respect to the cover involution $\tau_9$;
\item[b)] an upper bound for the degree over $k$ of a field of definition of the fibration, that is, a field over which the reducible fiber and a 0-section are defined; 
\item[c)] an upper bound for the degree over $k$ of a field $k_{\eta, \mathrm{MW}}$ over which the Mordell--Weil group of the fibration admits a set of generators.
\end{itemize}
\vspace{5pt}

The results are summarized in Table \ref{eq: tableof types of the elliptic fibrations on X9}, with the notations introduced in Sections~\ref{sec: extremal RES}, \ref{sec: double covers on ext. RES}~and~\ref{sec: surfaces R9 and X9}.\\

For each fibration in Table \ref{eq: table elliptic fibrations on X9} we find a configuration of (parts of the) reducible fibers in terms of the curves in Figure \ref{diagram X9}. Note that for lines 6, 7, and 10, this is done in the previous section.

\noindent$\underline{II^*+I_3^*}$\\
$\Theta_6^2,2\Theta_7^2,3\Theta_8^2,4\Theta_0^2,5\mathcal{O}_{X_9},6\Theta_0^1,4\Theta_8^1,2\Theta_7^1,3\Theta_1^1\;\;+\;\;\Theta_4^2,\Theta_2^2,2\Theta_3^2,2T_1,2\Theta_3^1,2\Theta_4^1,\Theta_5^1$.

\vspace{5pt}

\noindent$\underline{2III^*}$\\
$\Theta_5^1,2\Theta_6^1,3T_2,4\Theta_6^2,3\Theta_5^2,2\Theta_4^2,\Theta_3^2,2\Theta_7^2\;\;+\;\;\Theta_1^2,2\Theta_0^2,3\mathcal{O}_{X_9},4\Theta_0^1,3\Theta_1^1,2\Theta_2^1,\Theta_3^1,2\Theta_8^1$.

\vspace{5pt}
\noindent$\underline{III^*+I_9}$\\
$\Theta_0^1,2\Theta_8^1,3\Theta_7^1,4\Theta_6^1,3\Theta_5^1,2\Theta_4^1,\Theta_3^1,2T_2\;\;+\;\; \Theta_7^2,\Theta_8^2,\Theta_0^2,\Theta_1^2,\Theta_2^2,\Theta_3^2,\Theta_4^2,\Theta_5^2$.

\vspace{5pt}

\noindent$\underline{I_{11}^*}$\\
$\Theta_8^2,\Theta_1^2,2\Theta_0^2,2\mathcal{O}_{X_9},2\Theta_0^1, 2\Theta_8^1, 2\Theta_7^1, 2 \Theta_6^1, 2\Theta_5^1, 2\Theta_4^1, 2\Theta_3^1, \Theta_2^1, T_1$.

\vspace{5pt}

\noindent$\underline{I_5^*+I_7}$\\
$\Theta_5^1,T_2, 2\Theta_6^1, 2\Theta_7^1, 2\Theta_8^1, 2\Theta_0^1, 2\mathcal{O}_{X_9}, 2\Theta_0^2, \Theta_1^2, \Theta_8^2\;\;+\;\;\Theta_5^2, \Theta_4^2, \Theta_3^2, T_1, \Theta_3^1, \Theta_2^1$.

\vspace{5pt}

\noindent$\underline{IV^*+I_3^*+I_3}$\\
$\Theta_6^2, 2T_2, 3\Theta_6^1, 2\Theta_5^1, \Theta_4^1, 2\Theta_7^1, \Theta_8^1\;\;+\;\;\mathcal{O}_{X_9},\Theta_8^2,2\Theta_0^2, 2\Theta_1^2, 2\Theta_2^2, 2\Theta_3^2, T_1, \Theta_4^2\;\;+\;\;\Theta_1^1, \Theta_2^1$.

\vspace{5pt}

\noindent$\underline{I_{13}+I_4}$\\
$T_2, \Theta_6^1, \Theta_5^1, \Theta_4^1, \Theta_3^1, T_1, \Theta_3^2, \Theta_2^2, \Theta_1^2, \Theta_0^2, \Theta_8^2, \Theta_7^2, \Theta_6^2\;\;+\;\;\Theta_8^1, \Theta_0^1, \Theta_1^1$.\\

\vspace{5pt}

To find the elliptic fibration described in line 11 of the Table \ref{eq: table elliptic fibrations on X9}, we need to find another rational curve on $X_9$. We recall that the K3 surface $X_9$ has an infinite number of rational curves, and considering the elliptic fibration with fibers $IV^*+I_3^*+I_3$ we are able to describe one of them. Indeed, the divisor $$F:=\Theta_6^2+ 2T_2+ 3\Theta_6^1+ 2\Theta_5^1+ \Theta_4^1+ 2\Theta_7^1+ \Theta_8^1$$ corresponds to a reducible fiber of type $IV^*$ of this elliptic fibration. In particular $F$ is the class of the fiber of this fibration, and the divisor $\mathcal{O}_{X_9}+\Theta_8^2+2\Theta_0^2+2\Theta_1^2+2\Theta_2^2+2\Theta_3^2+T_1+ \Theta_4^2$ is linearly equivalent to $F$ and corresponds to the fiber of type $I_3^*$. The remaining reducible fiber consists of three curves meeting in a triangle, one is $\Theta_1^1$, one is $\Theta_2^1$ and we denote the third one by $M$. Since $\Theta_1^1+\Theta_2^1+M$ is a fiber of the elliptic fibration, $M$ is linearly equivalent to $F-\Theta_1^1-\Theta_2^1$. In particular this implies that the intersections properties of $M$ are the following: $M\Theta_1^1=M\Theta_2^1=M\Theta_7^2=M\Theta_5^2=1$ and $M$ is orthogonal to all the other curves appearing in Figure \ref{diagram X9}. 

Let us consider the following configuration of curves:

\vspace{5pt}

\noindent$\underline{I_{16}}$\\
$\Theta_6^1, \Theta_5^1, \Theta_4^1, \Theta_3^1, \Theta_2^1, M, \Theta_5^2, \Theta_4^2, \Theta_3^2, \Theta_2^2, \Theta_1^2, \Theta_0^2, \mathcal{O}_{X_9}, \Theta_0^1, \Theta_8^1, \Theta_7^1.$
\vspace{5pt} 

The curves $T_2$ and $\Theta_7^2$ are sections of this fibration. Assume that $T_2$ is the zero section, then $\Theta_7^2$ is a section, orthogonal to the zero section and meeting the reducible fibers $I_{16}$ in his fifth component. Thus the height of this section is $\frac{9}{16}$. As a consequence, the lattices spanned by the irreducible components of the reducible fiber of type $I_{16}$, the zero section $T_2$ and the section $\Theta_7^2$ is a sublattice of $\mathrm{NS}(X_9)$, which has the same rank and the same discriminant of $\mathrm{NS}(X_9)$ and therefore coincides with $\mathrm{NS}(X_9)$. So there are no torsion sections for this elliptic fibration (otherwise one should add their contribution to obtain the N\'eron--Severi group). As a consequence the fibration whose class of the fiber is $\Theta_6^1+ \Theta_5^1+\Theta_4^1+\Theta_3^1+\Theta_2^1+M+\Theta_5^2+\Theta_4^2+\Theta_3^2+\Theta_2^2+\Theta_1^2+\Theta_0^2+\mathcal{O}_{X_9}+\Theta_0^1+\Theta_8^1+\Theta_7^1$ corresponds to the fibration in line 11 of Table \ref{eq: table elliptic fibrations on X9}.

	\begin{corollary} For each fibration in Table \ref{eq: table elliptic fibrations on X9}, there exists at least one elliptic fibration on $X_9$ with the properties given in the list which is defined over $k_{R_9,\tau_9}$.
\end{corollary}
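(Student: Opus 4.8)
The plan is to prove the corollary by exhibiting, for every line of Table \ref{eq: table elliptic fibrations on X9}, a concrete realization of the fiber class and of a zero-section entirely in terms of the curves $\Theta_i^j$, $\mathcal{O}_{X_9}$, $T_1$, $T_2$ (and the auxiliary curve $M$), all of which are generators of $\mathrm{NS}(X_9)$. Since by Lemma \ref{generators_NS(X)} every such generator is defined over $k_{R_9,\tau_9}$, and by Corollary \ref{cor: kE9=kRt} we have $k_{\mathcal{E}_9}=k_{R_9,\tau_9}$, any fibration whose fiber and a section can be written as explicit combinations of these generators will be defined over $k_{R_9,\tau_9}$ as well. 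The essential input is Remark \ref{lemma: type 3}: because the irreducible components of reducible fibers and sections are rigid $(-2)$- and $(-1)$-curves, once their \emph{classes} lie over $k_{R_9,\tau_9}$ the curves themselves are defined over that field, and hence so is any reducible fiber they assemble into.

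First I would invoke the explicit fiber configurations already computed in Section \ref{sec: classification of the fibrations of X9}. For each fibration type — $II^*+I_3^*$, $2III^*$, $III^*+I_9$, $I_{11}^*$, $I_8^*+I_4$, $I_{16}$ (lines 7 and 11), $I_5^*+I_7$, $IV^*+I_3^*+I_3$, $I_2^*+I_{10}$, and $I_{13}+I_4$ — the preceding text displays at least one reducible fiber as an explicit integral combination of the curves in Figure \ref{diagram X9}. By Remark \ref{lemma: type 3}, this combination is a divisor whose class is defined over $k_{R_9,\tau_9}$, it corresponds to a genuine reducible fiber, and therefore the fibration $\eta$ itself is defined over $k_{R_9,\tau_9}$. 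The fiber class of $\mathcal{E}_{X_9}$ (line 1) is handled by Theorem \ref{theo: types and field of definition}(ii), being the type-2 fibration defined already over $k$.

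The second step is to exhibit a section for each fibration over the same field. Here the observation is that in every case the $(-2)$-curves orthogonal to the fiber class but not lying in the listed reducible fiber furnish sections; these are again among the generators of $\mathrm{NS}(X_9)$, hence defined over $k_{R_9,\tau_9}$. For instance, for $I_8^*+I_4$ the curve $\Theta_7^2$ (set as the zero-section in the torsion computation) works; for line 11 the section $T_2$, shown there to meet the $I_{16}$-fiber, gives a zero-section; and so on through the table. One checks in each line that at least one such curve has intersection $1$ with the fiber class, which exhibits it as a section and completes the verification over $k_{R_9,\tau_9}$.

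The main obstacle, and the only genuinely new content beyond bookkeeping, is the single fibration in line 11, whose realization as $I_{16}$ requires the auxiliary rational curve $M$ that is not one of the original generators in Figure \ref{diagram X9}. The plan here is exactly the argument already carried out above: $M$ is produced as $M\sim F-\Theta_1^1-\Theta_2^1$ inside the $IV^*$-fibration, so its class is an integral combination of generators and is therefore defined over $k_{R_9,\tau_9}$; being rigid and irreducible, $M$ itself is defined over $k_{R_9,\tau_9}$. Once $M$ is available over the right field, the $I_{16}$ configuration listed above is a reducible fiber over $k_{R_9,\tau_9}$ and, with zero-section $T_2$ and the verified section $\Theta_7^2$ of height $\tfrac{9}{16}$, the fibration of line 11 is defined over $k_{R_9,\tau_9}$. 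Thus every line of the table is covered, proving the corollary.
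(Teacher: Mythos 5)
Your proposal is correct and follows essentially the same route as the paper's own proof: types 1 and 2 via Theorem \ref{theo: types and field of definition}, type 3 via Remark \ref{lemma: type 3} using the explicit fiber configurations written in the generators of $\mathrm{NS}(X_9)$ (defined over $k_{R_9,\tau_9}$ by Corollary \ref{cor: kE9=kRt}), and the same special treatment of the auxiliary curve $M$ for line 11. The only cosmetic difference is that you also handle the type-1 fibrations (lines 7 and 10) through their explicit configurations rather than citing part i) of the theorem, and you spell out the section-finding step in slightly more detail; both are consistent with the paper's argument.
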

\begin{proof} The result follows by \ref{theo: types and field of definition} for the fibration of type 1 and 2. For the fibration of type 3, one wants to apply the Remark \ref{lemma: type 3}. For all the listed fibration with the exception of the 11, we are able to write the class of the fiber as a linear combinantion of $\Theta_i^j$, $\mathcal{O}_{X_9}$ and $T_k$. All these curves are defined on $k_{R_9,\tau_9}$, by \ref{cor: kE9=kRt}. In the case of the fibration 11, we introduced another curve, $M$. Since its class is written as linear combination of the classes generation $\mathrm{NS}(X)$, its class is defined on $k_{R_9,\tau_8}$. Since it negative effective class, we deduce that it is supported either on an irreducible rational curve or on the union of rational curves. Since it is a component of a fiber of a certain fibration, at least on the closure of the field of definition of the fibration, it is an irreducbile curve (where it is defined). Hence, $M$ is defined an irreducible smooth rational curve defined over $k_{R_9,\tau_9}$ 
\end{proof}

We gave an example for each fibrations in Table \ref{eq: table elliptic fibrations on X9}. We choose a section for each of them to be the zero section and we determine their type with respect to $\tau$. By using Proposition \ref{prop: GaloisXi} we describe the properties of the fields $k_{\eta,MW}$, which follows by the previous Corollary. The results are listed in Table \ref{eq: tableof types of the elliptic fibrations on X9}.

\begin{table}[h!]
\begin{tabular}{c|c|c|c|c|c|c}
$\mbox{n}^o$&roots orth.&type&sections&\makecell{field of \\def. \\0-section}&\makecell{field of\\def. all\\sections}&$[k_{\eta,\mathrm{MW}}:k]$\\
1 & $A_8^{\oplus 2}$ & 2 & $\mathcal{O}_{X_9},T_1,T_2$ & $\mathcal{O}_{X_9}/k$ & $k_R$ & $\leq2$\\
2 & $E_8\oplus D_7$ & 3 & $T_2,\Theta_5^2$ & $T_2/k_R$ & $k_{R,\tau_9}$ & $\leq4$\\ 
3 & $E_7^{\oplus 2}$ & 3 & $T_1,\Theta_2^2,\Theta_4^1$ & $T_1/k_R$ & $k_{R,\tau_9}$ & $\leq4$\\ 
4 & $E_7\oplus A_8$ & 3 & $\mathcal{O}_{X_9},T_1, \Theta_1^1,\Theta_2^1$ & $\mathcal{O}_{X_9}/k$ & $k_{R,\tau_9}$ & $\leq4$\\ 
5 & $D_{15}$ & 3 & $\Theta_2^2, \Theta_3^2$ & $\Theta_2^2/k_{R,\tau_9}$ & $k_{R,\tau_9}$ & $\leq4$\\ 
6 & $ D_{12}\oplus A_3$ &3 & $\Theta_2^2, \Theta_3^2, \Theta_7^2$  & $\Theta_7^2/k_{R,\tau_9}$ & $k_{R,\tau_9}$ & $\leq4$\\ 
7 & $A_{15}$ & 1 & $\Theta_4^1,\Theta_5^1,\Theta_4^2, \Theta_5^2$ & $\Theta_4^1/k_{R,\tau_9}$ & $k_{R,\tau_9}$ &  $\leq 4$\\
8 & $D_9\oplus A_{6}$ & 3 & $\Theta_4^1,\Theta_2^2,\Theta_6^2,\Theta_7^2$ & $\Theta_4^1/k_{R,\tau_9}$ & $k_{R,\tau_9}$ & $\leq4$\\
9 & $E_6\oplus D_7 \oplus A_2$ & 3 & $\Theta_0^1,\Theta_3^1,\Theta_5^2,\Theta_7^2$ & $\Theta_0^1/k_{\tau_9}$  & $k_{R,\tau_9}$ & $\leq4$\\
10 & $D_6\oplus A_9$ & 1 & $\Theta_7^1, \Theta_7^2,\Theta_2^1$ & $\Theta_7^1/k_{R,\tau_9}$ & $k_{R,\tau_9}$ & $\leq 4$\\
11 & $A_{15}$ &3 &$T_2,\Theta_7^2, \Theta_8^2,\Theta_6^2$&$T_2/k_R$&$k_{R,\tau_9}$&$\leq 4$\\
12 & $A_{12}\oplus A_3$ & 3 & $\mathcal{O}_{X_9},\Theta_2^1,\Theta_7^1,\Theta_4^2,\Theta_5^2$  & $\mathcal{O}_{X_9}/k$ & $k_{R,\tau_9}$ & $\leq 4$ 
\end{tabular}
\caption{Types of the different elliptic fibrations of $X_9$ and fields of definition}
\label{eq: tableof types of the elliptic fibrations on X9} 
\end{table}

\section{The surfaces \texorpdfstring{$R_4$}{R4}, \texorpdfstring{$R_3$}{R3}, \texorpdfstring{$R_2$}{R2} and the surfaces \texorpdfstring{$X_4$}{X4}, \texorpdfstring{$X_3$}{X3}, \texorpdfstring{$X_2$}{X2}}\label{sec: surfaces Ri and Xi}\hspace{0pt} \\

In this section we establish an analogous study for the extremal rational surfaces $R_i$, for $i=4,3,2$. We classify all the possible fibrations of the K3 surfaces $X_i$ and determine their types with respect to the cover involutions $\tau_i$, for $i=4,3,2$.

\subsection{The rational elliptic surfaces \texorpdfstring{$R_4$}{R4}, \texorpdfstring{$R_3$}{R3}, and \texorpdfstring{$R_2$}{R2}}\hspace{0pt}\label{subsec: Ri}\hspace{0pt}

\vspace{5pt}

Let $R_4$ be an extremal rational elliptic surface with one reducible fiber of type $I_4^*$. Its  Mordell--Weil group is $\Z/2\Z=\{\mathcal{O}, t_1\}$, where $\mathcal{O}$ is the zero section and $t_1$ is a 2-torsion section. Recall that a fiber of type $I_4^*$ is given by 9 smooth rational curves meeting with dual graph $\tilde{D_8}$, see \cite[Table I.4.1]{Mi}. The N\'eron--Severi group of $R_4$ contains also the classes of the irreducible components of the reducible fiber, denoted by $C_0, C_1,\ldots, C_8$. The intersections which are not trivial are the following:
$$
\begin{aligned}
&C_l^2=-2, C_0C_2=C_6C_8=1, \\
&C_lC_{j}=1 \quad \text{if and only if} \quad |l-j|=1 \,\text{and}\, \{l,j\}\subset\{2,3,4,5,6\}, \\
&\mathcal{O}C_0=t_1C_8=1,\quad \text{and} \quad \mathcal{O}^2=t_1^2=-1.
\end{aligned}
$$

Let $R_3$ be an extremal rational elliptic surface over $k$ with one reducible fiber of type $III^*$. As $R_3$ is extremal, there is another reducible fiber which is either an $I_2$ or an $III$.  Its Mordell--Weil group is $\Z/2\Z=\{\mathcal{O}, t_1\}$, where $\mathcal{O}$ is the zero section and $t_1$ is a 2-torsion section. Recall that a fiber of type $III^*$ is given by 8 smooth rational curves meeting with dual graph $\tilde{E_7}$, see \cite[Table I.4.1]{Mi}. The N\'eron--Severi group of $R_3$ contains also the classes of the irreducible components of the reducible fiber. Denoted by $C_l$ the components of the $III^*$ fiber and by $D_l$ the ones of the other reducible fiber, the intersections which are not trivial are the following:
$$
\begin{aligned}
&C_l^2=-2, C_lC_{j}=1 \quad \text{if and only if} \quad |l-j|=1 \, \text{and} \, \{l,j\}\subset\{0,1,2,3,4,5,6\},\\
&C_3C_7=1,\quad D_0D_1=2, \quad D_j^2=-2, \\
&\mathcal{O}C_0=t_1C_6=\mathcal{O}D_0=t_1D_1=1 \quad \text{and} \quad \mathcal{O}^2=t_1^2=-1.
\end{aligned}
$$

Let $R_2$ be an extremal rational elliptic surface over $k$ with one reducible fiber of type $II^*$. The other singular fibers are either $II$ or $2I_1$. Its Mordell--Weil group is $\{\mathcal{O}\}$, i.e., it is trivial. Recall that a fiber of type $II^*$ is given by 9 smooth rational curves meeting with dual graph $\tilde{E_8}$, see \cite[Table I.4.1]{Mi}. The N\'eron--Severi group of $R_2$ contains also the classes of the irreducible components of the reducible fiber, denoted by $C_0, C_1,\ldots, C_8$. The intersections which are not trivial are the following:
$$
\begin{aligned}
&C_l^2=-2, C_lC_{j}=1\quad \text{if and only if} \quad |l-j|=1 \, \text{and} \, \{l,j\}\subset\{0,1,2,3,4,5,6,7\},\\
&C_8C_5=1,\quad \mathcal{O}C_0=1, \quad \mathcal{O}^2=-1. \\
\end{aligned}
$$

The following result shows that the surfaces $R_i$ have trivial Galois group $G_{R_i}$, that is its N\'eron-Severi group admits a set of generators over $k$ given by the zero section, a smooth fiber and the non-trivial fiber components of the reducible fibers. It also presents their contractions of negative curves to minimal $k$-rational surfaces.

\begin{proposition}\label{prop: Galois R2, R3 and R4}
Let $R$ be on the following surfaces: $R_2,R_3,R_4$. Then $G_R$ is trivial. 
Moreover, the surfaces $R_2$, $R_3$ and $R_4$ can be contracted to $\mathbb{P}^2$; the surfaces $R_3$ and $R_4$ can be also contracted to $\mathbb{P}^1\times\mathbb{P}^1$ and the surfaces $R_2$ and $R_3$ can be also contracted to $\mathbb{F}_2$, the Hirzebruch surface with a unique $(-2)$-curve.
	\end{proposition}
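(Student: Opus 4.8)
The plan is to establish the two assertions separately: first the triviality of $G_R$ for $R\in\{R_2,R_3,R_4\}$, and then, using that triviality, the existence of the various contractions, which I would reduce to an explicit bookkeeping of self-intersections under successive blow-downs.

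First I would prove $G_R=\{\mathrm{id}\}$. The reducible fibers of each of these three surfaces have pairwise distinct Kodaira types, so the criteria in the proof of Lemma \ref{models over QQ} (cases i), ii), iii), see also Remark \ref{RmkNS} i)) apply: $G_R$ fixes the zero component of every reducible fiber, and it fixes every section because $\mathrm{MW}(\mathcal{E}_R)$ is trivial or $\Z/2\Z$, so in particular the $2$-torsion section $t_1$ is defined over $k$. The fiber with the most components is $\tilde{E_8}$, $\tilde{E_7}$ or $\tilde{D_8}$, and $G_R$ must act on $\mathrm{NS}(R)$ by a symmetry of the corresponding dual graph. I would then note that fixing the component met by $\mathcal{O}$ (namely $C_0$), together with the component met by $t_1$ (namely $C_6$ for $R_3$ and $C_8$ for $R_4$), destroys every nontrivial graph automorphism: $\tilde{E_8}$ has no symmetry at all, the reflection of $\tilde{E_7}$ swaps the two ends $C_0\leftrightarrow C_6$, and the symmetries of $\tilde{D_8}$ all move at least one of the two marked fork nodes. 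Hence every component is $G_R$-stable; being a rigid $(-2)$-curve it is therefore defined over $k$, so $k_R=k$.

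With $G_R$ trivial, all negative curves on $R$ — which on an extremal rational elliptic surface are exactly the sections ($(-1)$-curves) and the fiber components ($(-2)$-curves) — are defined over $k$ and fixed by $G_R$. Consequently any composition of contractions of $(-1)$-curves descends to $k$, so it suffices to exhibit such contractions, tracking self-intersections. The contraction to $\mathbb{P}^2$ is already furnished by Lemma \ref{models over QQ}. For the Hirzebruch targets I would give explicit sequences: for $R_2$, contracting $\mathcal{O},C_0,C_1,C_2,C_3,C_4,C_5,C_8$ leaves $C_6$ of self-intersection $0$ and the single $(-2)$-curve $C_7$, i.e. $\mathbb{F}_2$; for $R_3$, contracting $\mathcal{O},t_1,C_0,C_6,C_1,C_5,C_2,C_4$ leaves $C_3$ (self-intersection $0$) and the unique $(-2)$-curve $C_7$ (with $D_0,D_1$ becoming bisections), again $\mathbb{F}_2$; and for $R_4$, contracting $\mathcal{O},t_1,C_0,C_2,C_8,C_6,C_3,C_5$ leaves the three $0$-curves $C_1,C_4,C_7$ with $C_1C_4=C_4C_7=1$ and $C_1C_7=0$, which is $\mathbb{P}^1\times\mathbb{P}^1$.

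The delicate point, which I would treat most carefully, is $R_3\to\mathbb{P}^1\times\mathbb{P}^1$, since every naive blow-down tends to leave a surviving $(-2)$-curve (the branch node $C_7$ of $\tilde{E_7}$, or a component of the $I_2/III$ fiber), producing $\mathbb{F}_2$ instead of $\mathbb{F}_0$. My approach is to switch to the conic bundle given by $f=\mathcal{O}+t_1+\sum_{i=0}^{6}C_i$: one checks $f^2=0$, $f\cdot F=2$ and $f\cdot C\ge 0$ for every negative curve $C$, so $f$ is a nef conic class whose unique reducible fiber is the chain $\mathcal{O}-C_0-C_1-\cdots-C_6-t_1$, while $C_7,D_0,D_1$ are its three sections (attaching at $C_3$ and at the two ends $\mathcal{O},t_1$). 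Contracting eight of the nine chain components while keeping as survivor a component met by none of the three sections — for instance $C_1$, so that one contracts $\mathcal{O},C_0,t_1,C_6,C_5,C_4,C_3,C_2$ — raises all of $C_7,D_0,D_1$ to non-negative self-intersection; the resulting rank-$2$ surface then has no negative curve and two $0$-curves meeting once, hence is $\mathbb{P}^1\times\mathbb{P}^1$. I expect the main obstacle to be exactly this control of the Hirzebruch index, namely verifying that a survivor avoiding all three section-attachments forces every section up to self-intersection $\ge 0$; and I would remark that the absence of a second section for $R_2$ is precisely what obstructs the analogous construction there, consistent with $\mathbb{P}^1\times\mathbb{P}^1$ not being claimed for $R_2$.
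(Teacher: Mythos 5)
Your proposal is correct and, for most of the statement, follows the same route as the paper: triviality of $G_R$ is deduced from the fact that $\mathcal{O}$ and the $2$-torsion section are individually preserved, which pins down the components they meet and hence (by rigidity of the dual graphs $\tilde{E_8}$, $\tilde{E_7}$, $\tilde{D_8}$) every fiber component; the contractions are then explicit sequences of blow-downs of $(-1)$-curves defined over $k$. Your blow-downs for $R_2\to\mathbb{F}_2$, $R_3\to\mathbb{F}_2$ and $R_4\to\mathbb{P}^1\times\mathbb{P}^1$ coincide with the paper's up to reordering. Where you genuinely diverge is $R_3\to\mathbb{P}^1\times\mathbb{P}^1$: you organize the argument around the conic class $f=\mathcal{O}+t_1+\sum_{i=0}^{6}C_i$, keep $C_1$ as the surviving fiber component and contract the other eight, which indeed leaves a lattice $U$ generated by two irreducible $0$-curves meeting once (one checks $\bar{C_1}^2=\bar{C_7}^2=\bar{D_0}^2=0$, $\bar{D_1}^2=4$). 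This is cleaner than the paper's version, which contracts $\mathcal{O},C_0,\dots,C_4,t_1$ and then names the surviving generators inconsistently (as written, the last contracted curve should be $C_6$, not $C_5$, for the image of $D_1$ not to remain a $(-1)$-curve); your route buys a uniform explanation of why all three multisections rise to non-negative self-intersection.

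Two small inaccuracies, neither fatal. First, your general criterion for the surviving component is too weak: a survivor merely \emph{disjoint} from the three sections does not suffice (keeping $C_0$ satisfies your criterion but leaves $\bar{D_0}^2=-1$, giving $\mathbb{F}_1$); what is needed is that the survivor be at distance at least $2$ in the chain from each component where a section attaches, i.e.\ $K\in\{C_1,C_5\}$, since a section attached at distance $d$ from the survivor gains exactly $+d$ under the contraction. Your explicit choice $C_1$ satisfies the correct criterion, so the proof as instantiated is fine, but the stated heuristic should be sharpened. Second, in the $R_3\to\mathbb{F}_2$ contraction the images of $D_0,D_1$ are not bisections of the ruling $|\bar{C_3}|$ but sections of it (each has self-intersection $2$ and meets $\bar{C_3}$ once); this parenthetical plays no role in the argument.
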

\proof The proof is similar to the one of Proposition \ref{prop: Galois of R9}. Indeed, for $R=R_2$ or $R_3$, each $g\in G_R$, $g(\mathcal{O})=\mathcal{O}$ and if $\mathrm{MW}=\{\mathcal{O}, t_1\}$, $g(t_1)$ has to be a section different from $\mathcal{O}$ and hence $g(t_1)=t_1$. Thus for each $R_i$, $i=2,3$, the sections are preserved and this implies, arguing via the intersection of the components of the reducible fibers as in Proposition \ref{prop: Galois of R9}, that all the components of the unique reducible fibers are fixed. 

Let us consider the surface of type $R_3$. We have three different possibilities, to obtain three different surfaces:
	\begin{itemize}\item Let us contract the sections $\mathcal{O}$ and $t_1$. Then we contract the images of $C_0$ and $C_6$ (which are now $(-1)$-curves);  the images of $C_1$ and $C_5$; the images of $C_2$
 and $C_4$. There remain the images of $C_3$, which is a curve with self-intersection $0$, and of $C_7$, which is a curve with self-intersection $-2$. There are no $(-1)$-curves on this surface, so we obtain a minimal rational surface, with two independent classes in the N\'eron--Severi group which have self-intersection 0 and $-2$. Hence we obtained $\mathbb{F}_2$
 \item Let us contract first the section $\mathcal{O}$ and then (in this order), the images of the components $C_0$, $C_1$, $C_2$, $C_3$. Now the image of $C_7$ is a $(-1)$-curve. We contract it. It remains a unique $(-1)$-curve, which is the section $t_1$. We contract it and then (in this order) the images of the components $C_6$ and $C_5$. We obtain a minimal rational surface whose N\'eron--Severi group is generated by one class (we contracted 9 curves), which is the image of $C_4$. This rational surface is necessarily $\mathbb{P}^2$.
 \item Let us contract first the section $\mathcal{O}$ and then (in this order), the images of the components $C_0$, $C_1$, $C_2$, $C_3$. Now the image of $C_4$ is a $(-1)$-curve. We contract it. Then we contract $t_1$  and the image of the component $C_5$. We obtain a minimal rational surface, whose N\'eron--Severi group is generated by the two classes which are the images of $C_7$ and $C_5$. Their self-intersection is 0 and they meet in a point, so we obtained $\mathbb{P}^1 \times \mathbb{P}^1$.
\end{itemize}

Let us now consider the surface $R_2$ (see Figure \ref{contraction R_2}). There is a unique $(-1)$-curve, the section $\mathcal{O}$. So we contract it, and than we contract (in this order) the images of the components $C_0$, $C_1$, $C_2$, $C_3$, $C_4$, $C_5$. Now both the images of $C_6$ and $C_8$ are $(-1)$-curves and they meet in a point. \begin{itemize}\item If one contracts the image  of $C_8$, one obtains a minimal surface, whose generators of the N\'eron--Severi group are the images of $C_7$ and $C_6$ and this surface is $\mathbb{F}_2$ (because of the presence of a $(-2)$-curve, image of $C_7$). 
	\item If one contracts the image of $C_6$, then one has to contract the image of $C_7$ and one obtains a minimal rational surface, whose N\'eron--Severi group has one generator (the image of $C_8$) and thus the surface is $\mathbb{P}^2$.
\end{itemize}
\begin{figure}
	\includegraphics[scale=0.1]{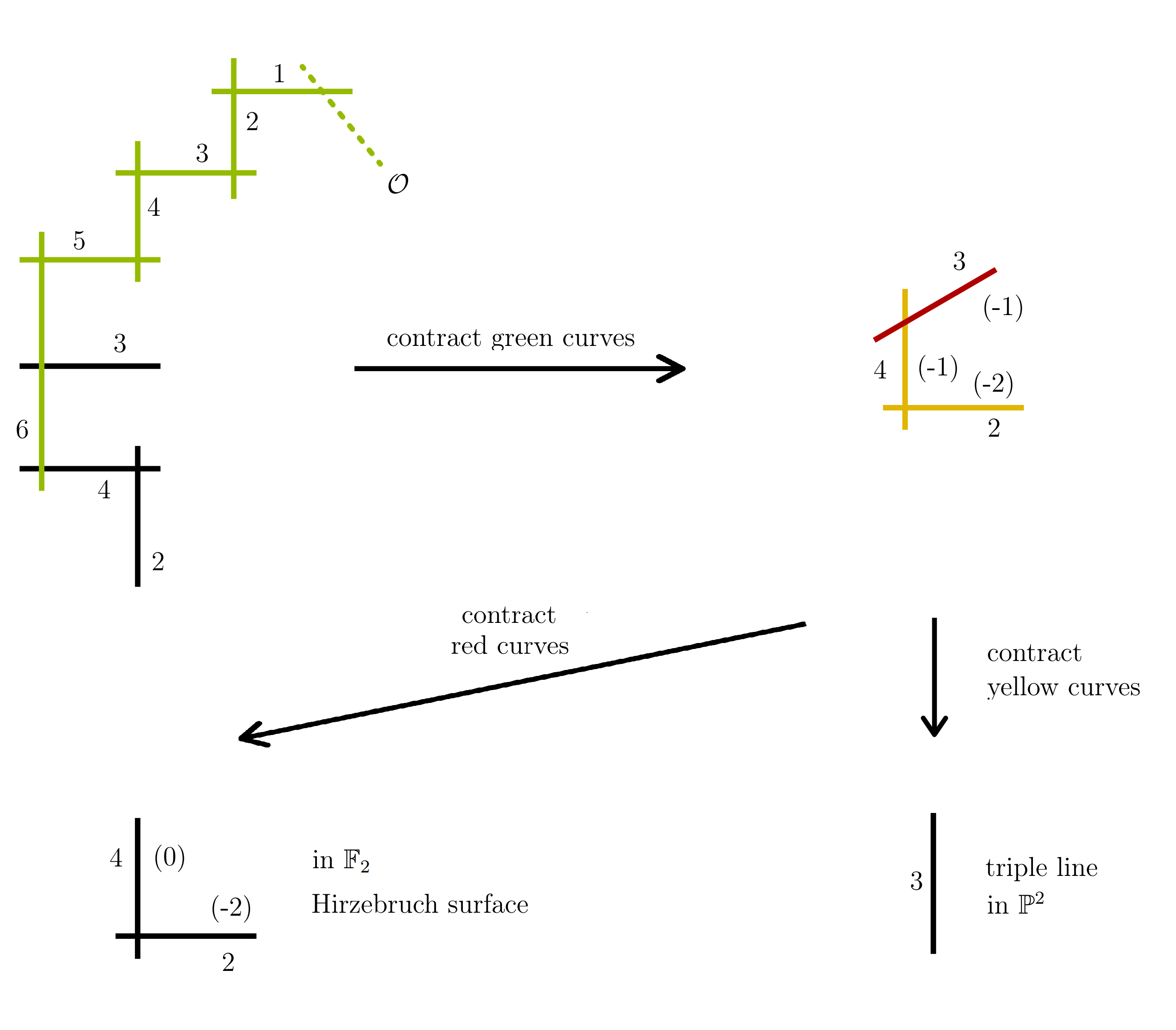}
	\caption{Contractions of $R_2$ to $\mathbb{F}_2$ and to $\mathbb{P}^2$}	\label{contraction R_2}
\end{figure}
Let us consider the surface of type $R_4$. We contract first the section $\mathcal{O}$ and then (in this order) the images of the components $C_0$, $C_2$, $C_3$, $C_4$, $C_5$, $C_6$. Now we have three $(-1)$ curves, i.e. the images of  $C_7$, $C_8$ and $t_1$. The image of $C_8$ meets both the images of $C_7$ and of $t_1$: if one contracts the image of $C_8$, one obtains the minimal surface  $\mathbb{P}^1\times\mathbb{P}^1$; if one contracts the  images of $t_1$ and $C_7$ one obtains $\mathbb{P}^2$.
\endproof

\subsection{The K3 surfaces \texorpdfstring{$X_4$}{X4}, \texorpdfstring{$X_3$}{X3}, \texorpdfstring{$X_2$}{X2}}\hspace{0pt}

\vspace{5pt}

Let $X_i$ be a K3 surface obtained by a generic base change of order 2 on the rational elliptic surface $R_i$ for $i=4,3,2$ as in Section \ref{sec: setting}. Let $P_i$ and $Q_i$ be the points corresponding to the branch fibers of the cover $X_i\rightarrow R_i$. We have the following result, analogous to Proposition \ref{prop: GaloisXi}. 

\begin{proposition}
The Galois group $G_{\mathcal{E}_{X_i}}$ of the elliptic fibration $\mathcal{E}_{X_i}:X_i\ra\mathbb{P}^1$ is contained in $\left(\Z/2\Z\right)$. It is trivial if and only if the points $P_i$ and $Q_i$ are defined over the ground field. 
\end{proposition}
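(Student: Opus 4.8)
The plan is to reduce the computation of $G_{\mathcal E_{X_i}}$ to the behaviour of the base-change cover $d\colon C\to\mathbb P^1$ on the reducible fibres, exactly as in the proof of Proposition \ref{prop: GaloisXi}. First I would invoke Proposition \ref{prop: Galois R2, R3 and R4}, which gives $G_{R_i}=\{\mathrm{id}\}$ and hence $k_{R_i}=k$; by Lemma \ref{generators_NS(X)} the group $\mathrm{NS}(X_i)$ is then generated over $k_{R_i,\tau_i}=k_{\tau_i}$ by the zero section, a smooth fibre, and the components of the reducible fibres of $\mathcal E_{X_i}$. A Shioda--Tate count as in Proposition \ref{latticeA8} shows that these $16$ fibre components, together with the two sections, already span a rank-$18$ lattice of the correct discriminant, so they form a full set of generators; each is a rigid negative curve, and therefore $k_{\mathcal E_{X_i}}=k_{\tau_i}$. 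It thus suffices to analyse $k_{\tau_i}$.

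Next I would show that the sections contribute nothing to the extension. The Mordell--Weil rank of $\mathcal E_{X_i}$ is zero (the two copies of the reducible fibre of $R_i$ contribute $16$ to the trivial lattice, leaving $18-2-16=0$), so $\mathrm{MW}(\mathcal E_{X_i})$ is the torsion pulled back from $R_i$. By Lemma \ref{lemma: sections on R and induced curves on X} every section of $\mathcal E_{R_i}$, being $k$-rational, pulls back to a $k$-rational section of $\mathcal E_{X_i}$. Hence all sections are defined over $k$, and $k_{\tau_i}$ is cut out purely by the reducible-fibre components.

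These components occur in pairs: above each $t_j\in\mathbb P^1(k)$ carrying a reducible fibre of $R_i$ there are two sheets of $C$, and the two isomorphic copies of that fibre are interchanged by $\tau_i$. The absolute Galois group can only permute the two copies lying over a common $t_j$, and it does so through the single involution interchanging the sheets of the degree-$2$ cover $d$; consequently each $\sigma_j$ is an involution and $G_{\mathcal E_{X_i}}\subseteq\mathbb Z/2\mathbb Z$. For $R_2$ and $R_4$ there is a unique reducible fibre and this is immediate; for $R_3$, which has two reducible fibres, I would argue that the two sheet-swaps $\sigma_1,\sigma_2$ are governed by one and the same quadratic character of $d$, namely the obstruction to the sheets of $C$ being rational, so that $\langle\sigma_1,\sigma_2\rangle$ is still cyclic of order at most two.

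Finally, $k_{\tau_i}=k$ precisely when the two sheets of $C$ over the reducible locus are individually defined over $k$, that is, when $d$ splits there; I would identify this splitting field with the field of definition of the branch locus. The main obstacle is exactly this last equivalence -- relating the rationality of the sheets over the reducible fibres to the rationality of the two branch points, and in particular forcing the coincidence of $\sigma_1,\sigma_2$ in the $R_3$ case. I expect to settle it by writing $d$ in a normalized model, as in Remark \ref{twomodelsR9}, in which the deck involution $\iota_C$ is defined over $k$ with fixed locus exactly the two ramification points above $P_i$ and $Q_i$; the sheets become rational if and only if these fixed points, equivalently $P_i$ and $Q_i$, are rational. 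This pins down $k_{\tau_i}$ as the quadratic field generated by the branch points and yields the stated dichotomy.
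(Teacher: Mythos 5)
Your overall route is the same as the paper's: invoke Proposition \ref{prop: Galois R2, R3 and R4} to make $G_{R_i}$ trivial, observe via Lemma \ref{generators_NS(X)} and the $k$-rationality of the (torsion) sections coming from Lemma \ref{lemma: sections on R and induced curves on X} that the only possible Galois action on a generating set of $\mathrm{NS}(X_i)$ is the permutation of the two copies of each reducible fibre of $\mathcal{E}_{R_i}$, and then translate this into a condition on the double cover $d$. The paper's own proof is exactly this argument compressed into a single sentence, so everything up to your last paragraph is a faithful, more detailed reconstruction of what the authors do.

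The gap is in the final equivalence, which you correctly single out as the main obstacle but do not actually close. Rationality of the ramification points of $d$ (equivalently, of $P_i$ and $Q_i$) is not equivalent to the splitting over $k$ of the fibre $d^{-1}(t_j)$ above the point $t_j$ carrying the reducible fibre, and it is the latter that governs $\sigma_j$. Concretely, writing the cover as $u^2=c\,f(t)$ with $f$ monic vanishing on the branch locus and $c\in k^{*}$, the fibre $d^{-1}(t_j)$ splits if and only if $c\,f(t_j)$ is a square in $k$; this depends on $t_j$ and on the quadratic twist $c$, not only on whether $f$ splits over $k$. For instance $d(s)=s^{2}$ has both branch points rational, yet $d^{-1}(2)=\{\pm\sqrt{2}\}$ is irrational, so the two reducible fibres of $\mathcal{E}_{X_i}$ sitting over $t_j=2$ would be conjugate even though $P_i,Q_i\in\mathbb{P}^1(k)$. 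The same issue undercuts your treatment of $R_3$: with two reducible fibres at $t_1\neq t_2$, the classes of $c\,f(t_1)$ and $c\,f(t_2)$ in $k^{*}/(k^{*})^{2}$ are in general independent, so $\sigma_1$ and $\sigma_2$ need not be ``governed by one and the same quadratic character'' and $\langle\sigma_1,\sigma_2\rangle$ is a priori only contained in $(\mathbb{Z}/2\mathbb{Z})^{2}$, not in $\mathbb{Z}/2\mathbb{Z}$. To be fair, the published proof asserts the same equivalence in one clause without justification, so you have located precisely the point at which the paper's argument is also thin; but your proposed normalization of $d$ via the deck involution does not repair it, since it only controls the ramification points and not the splitting behaviour of the fibres over the $t_j$.
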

\begin{proof}
The group $G_{R_i}$ is trivial by Proposition \ref{prop: Galois R2, R3 and R4}, so the unique Galois action is the one of the cover involution $\tau_i$, which is trivial if and only if the branch fibers are defined over the ground field.
\end{proof}

The elliptic fibrations $\mathcal{E}_{X_i}$, $i=4,3,2$, are induced by $\mathcal{E}_{R_i}$. We fix the following notation: each component $C_l$ (resp. $D_l$) of a reducible fiber of $\mathcal{E}_{R_i}$ corresponds to two curves $\Theta_l^{j}$ (resp. $\Phi_l^j$), $j=1,2$ on $X_i$ which are components of two different reducible fibers on $X_i$. Moreover the zero section of $\mathcal{E}_{R_i}$ induces the zero section, $\mathcal{O}_{X_i}$, of $\mathcal{E}_{X_i}$ and, if there is a torsion section $t_1$ on $R_i$, it induces a torsion section $T_1$ on $X_i$.

So we have the following curves on $X_i$: $$\Theta_l^j\ j=1,2;\  \mathcal{O}_{X_i};\ T_1\mbox{ if } i\neq 2;\ \Phi_l^j\ j=1,2, l=0,1\mbox{ if }i=3.$$ Denote by $\pi_i:X_i\to R_i$ the double cover of $R_i$ induced by the base change and by $\tau_i$ the cover involution. We have 
$$\pi_i(\mathcal{O}_{X_i})=\mathcal{O},\ \ 
\pi_i(\Theta_l^1)=\pi_i(\Theta_l^2)=C_l,$$ 
$$\pi_i(T_1)=t_1\mbox{ if }i\neq 2,\ \
\pi_i(\Phi_l^1)=\pi_i(\Phi_l^2)=D_l\mbox{ if }i=3.$$
$$\tau_i(\mathcal{O}_{X_i})=\mathcal{O}_{X_i},\  
\tau_i(\Theta_l^1)=\Theta_l^2,\ 
\tau_i(T_1)=T_1\mbox{ if }i\neq 2,\ 
\tau_i(\Phi_l^1)=\Phi_l^2\mbox{ if }i=3.$$

\vspace{5pt}

Figures \ref{diagram X4}, \ref{diagram X3}, and \ref{diagram X2} summarize the above. Note that in Figure \ref{diagram X3}, $\Phi_2^1$ and $\Phi_2^2$ are both connected to $T_1$. 

\begin{center}
\begin{figure}[h]
\begin{tikzpicture}[scale=1] 
\node [label=above right:$T_1$, draw,circle,inner sep=0pt,minimum size=4pt] at (0,0) {};
\node [label=above left:$\mathcal{O}_{X_4}$, draw,circle,inner sep=0pt,minimum size=4pt] at (6,0) {};
\node [label=left:$\Theta_0^2$,draw,circle,inner sep=0pt,minimum size=4pt] at (6,-1) {};
\node [label=left:$\Theta_1^2$,draw,circle,inner sep=0pt,minimum size=4pt] at (6,-3) {};
\node [label=above:$\Theta_2^2$,draw,circle,inner sep=0pt,minimum size=4pt] at (5,-2) {};
\node [label=above:$\Theta_3^2$,draw,circle,inner sep=0pt,minimum size=4pt] at (4,-2) {};
\node [label=above:$\Theta_4^2$,draw,circle,inner sep=0pt,minimum size=4pt] at (3,-2) {};
\node [label=above:$\Theta_5^2$,draw,circle,inner sep=0pt,minimum size=4pt] at (2,-2) {};
\node [label=above:$\Theta_6^2$,draw,circle,inner sep=0pt,minimum size=4pt] at (1,-2) {};
\node [label=right:$\Theta_7^2$,draw,circle,inner sep=0pt,minimum size=4pt] at (0,-3) {};
\node [label=right:$\Theta_8^2$,draw,circle,inner sep=0pt,minimum size=4pt] at (0,-1) {};
\node [label=left:$\Theta_0^1$,draw,circle,inner sep=0pt,minimum size=4pt] at (6,1) {};
\node [label=left:$\Theta_1^1$,draw,circle,inner sep=0pt,minimum size=4pt] at (6,3) {};
\node [label=below:$\Theta_2^1$,draw,circle,inner sep=0pt,minimum size=4pt] at (5,2) {};
\node [label=below:$\Theta_3^1$,draw,circle,inner sep=0pt,minimum size=4pt] at (4,2) {};
\node [label=below:$\Theta_4^1$,draw,circle,inner sep=0pt,minimum size=4pt] at (3,2) {};
\node [label=below:$\Theta_5^1$,draw,circle,inner sep=0pt,minimum size=4pt] at (2,2) {};
\node [label=below:$\Theta_6^1$,draw,circle,inner sep=0pt,minimum size=4pt] at (1,2) {};
\node [label=right:$\Theta_7^1$,draw,circle,inner sep=0pt,minimum size=4pt] at (0,3) {};
\node [label=right:$\Theta_8^1$,draw,circle,inner sep=0pt,minimum size=4pt] at (0,1) {};

\path[every node/.style={font=\sffamily\small}]
     (0,3) edge (1,2)
     (0,1) edge (1,2)
     (1,2) edge (2,2)
     (2,2) edge (3,2)
     (3,2) edge (4,2)
     (4,2) edge (5,2)
     (5,2) edge (6,3)
     (5,2) edge (6,1)
     (0,1) edge (0,0)
     (0,0) edge (0,-1)
     (0,-1) edge (1,-2)
     (1,-2)	edge (2,-2)
     (2,-2)	edge (3,-2)
     (3,-2)	edge (4,-2)
     (4,-2) edge (5,-2)
     (5,-2) edge (6,-1)
     (5,-2)	edge (6,-3)
     (6,-1)	edge (6,0)
     (6,0)	edge (6,1)
     (0,-3) edge (1,-2) 
     ;
\draw (-0.5,0) -- (6.5,0)[dashed];

\path[<->,>=stealth',auto,every node/.style={font=\sffamily\small}](6.7,-0.5) edge[bend right] node [right]{$\tau_4$} (6.7,0.5);
\end{tikzpicture}
\caption{Reducible fibers and sections of the fibration $\mathcal{E}_{X_4}$ on $X_4$}
\label{diagram X4}
\end{figure} 
\end{center}

\begin{center}
\begin{figure}[h]
\begin{tikzpicture}[scale=1] 
\node [label=above :$\Phi_1^1$, draw,circle,inner sep=0pt,minimum size=4pt] at (7,1.5) {};
\node [label=above :$\Phi_2^1$, draw,circle,inner sep=0pt,minimum size=4pt] at (8,1.5) {};
\node [label=below :$\Phi_1^2$,draw,circle,inner sep=0pt,minimum size=4pt] at (7,-1.5) {};
\node [label=below:$\Phi_2^2$,draw,circle,inner sep=0pt,minimum size=4pt] at (8,-1.5) {};
\node [label=above left:$T_1$, draw,circle,inner sep=0pt,minimum size=4pt] at (0,0) {};
\node [label=above right:$\mathcal{O}_{X_3}$, draw,circle,inner sep=0pt,minimum size=4pt] at (6,0) {};
\node [label=above left:$\Theta_0^2$,draw,circle,inner sep=0pt,minimum size=4pt] at (6,-1) {};
\node [label=above:$\Theta_1^2$,draw,circle,inner sep=0pt,minimum size=4pt] at (5,-1) {};
\node [label=above:$\Theta_2^2$,draw,circle,inner sep=0pt,minimum size=4pt] at (4,-1) {};
\node [label=above:$\Theta_3^2$,draw,circle,inner sep=0pt,minimum size=4pt] at (3,-1) {};
\node [label=above:$\Theta_4^2$,draw,circle,inner sep=0pt,minimum size=4pt] at (2,-1) {};
\node [label=above:$\Theta_5^2$,draw,circle,inner sep=0pt,minimum size=4pt] at (1,-1) {};
\node [label=above right:$\Theta_6^2$,draw,circle,inner sep=0pt,minimum size=4pt] at (0,-1) {};
\node [label=right:$\Theta_7^2$,draw,circle,inner sep=0pt,minimum size=4pt] at (3,-2) {};
\node [label=below left:$\Theta_0^1$,draw,circle,inner sep=0pt,minimum size=4pt] at (6,1) {};
\node [label=below:$\Theta_1^1$,draw,circle,inner sep=0pt,minimum size=4pt] at (5,1) {};
\node [label=below:$\Theta_2^1$,draw,circle,inner sep=0pt,minimum size=4pt] at (4,1) {};
\node [label=below:$\Theta_3^1$,draw,circle,inner sep=0pt,minimum size=4pt] at (3,1) {};
\node [label=below:$\Theta_4^1$,draw,circle,inner sep=0pt,minimum size=4pt] at (2,1) {};
\node [label=below:$\Theta_5^1$,draw,circle,inner sep=0pt,minimum size=4pt] at (1,1) {};
\node [label=below right:$\Theta_6^1$,draw,circle,inner sep=0pt,minimum size=4pt] at (0,1) {};
\node [label=right:$\Theta_7^1$,draw,circle,inner sep=0pt,minimum size=4pt] at (3,2) {};

\path[every node/.style={font=\sffamily\small}]
     (0,1) edge (1,1)
     (1,1) edge (2,1)
     (2,1) edge (3,1)
     (3,1) edge (4,1)
     (4,1) edge (5,1)
     (5,1) edge (6,1)
     (3,1) edge (3,2)
     (0,1) edge (0,0)
     (0,0) edge (0,-1)
     (6,0)  edge (6,1)
     (0,-1) edge (1,-1)
     (1,-1)	edge (2,-1)
     (2,-1)	edge (3,-1)
     (3,-1)	edge (4,-1)
     (4,-1) edge (5,-1)
     (5,-1) edge (6,-1)
     (6,-1) edge (6,0)
     (3,-1)	edge (3,-2) 
     (6,0)  edge (7,1.5)
     (6,0)  edge (7,-1.5)
     ;
\draw (-0.5,0) -- (8.5,0)[dashed];
\draw[transform canvas={yshift=-1.5pt}] (7,1.5) -- (8,1.5);
\draw[transform canvas={yshift=1.5pt}] (8,1.5) -- (7,1.5);
\draw[transform canvas={yshift=-1.5pt}] (7,-1.5) -- (8,-1.5);
\draw[transform canvas={yshift=1.5pt}] (8,-1.5) -- (7,-1.5);
\path[<->,>=stealth',auto,every node/.style={font=\sffamily\small}](8.7,-0.5) edge[bend right] node [right]{$\tau_3$} (8.7,0.5);
\end{tikzpicture}
\caption{Reducible fibers and sections of the fibration $\mathcal{E}_{X_3}$ on $X_3$}
\label{diagram X3}
\end{figure} 
\end{center}

\begin{center}
\begin{figure}[h]
\begin{tikzpicture}[scale=1] 
\node [label=above right:$\mathcal{O}_{X_2}$, draw,circle,inner sep=0pt,minimum size=4pt] at (6,0) {};
\node [label=above left:$\Theta_0^2$,draw,circle,inner sep=0pt,minimum size=4pt] at (6,-1) {};
\node [label=above:$\Theta_1^2$,draw,circle,inner sep=0pt,minimum size=4pt] at (5,-1) {};
\node [label=above:$\Theta_2^2$,draw,circle,inner sep=0pt,minimum size=4pt] at (4,-1) {};
\node [label=above:$\Theta_3^2$,draw,circle,inner sep=0pt,minimum size=4pt] at (3,-1) {};
\node [label=above:$\Theta_4^2$,draw,circle,inner sep=0pt,minimum size=4pt] at (2,-1) {};
\node [label=above:$\Theta_5^2$,draw,circle,inner sep=0pt,minimum size=4pt] at (1,-1) {};
\node [label=above:$\Theta_6^2$,draw,circle,inner sep=0pt,minimum size=4pt] at (0,-1) {};
\node [label=above:$\Theta_7^2$,draw,circle,inner sep=0pt,minimum size=4pt] at (-1,-1) {};
\node [label=right:$\Theta_8^2$,draw,circle,inner sep=0pt,minimum size=4pt] at (1,-2) {};
\node [label=below left:$\Theta_0^1$,draw,circle,inner sep=0pt,minimum size=4pt] at (6,1) {};
\node [label=below:$\Theta_1^1$,draw,circle,inner sep=0pt,minimum size=4pt] at (5,1) {};
\node [label=below:$\Theta_2^1$,draw,circle,inner sep=0pt,minimum size=4pt] at (4,1) {};
\node [label=below:$\Theta_3^1$,draw,circle,inner sep=0pt,minimum size=4pt] at (3,1) {};
\node [label=below:$\Theta_4^1$,draw,circle,inner sep=0pt,minimum size=4pt] at (2,1) {};
\node [label=below:$\Theta_5^1$,draw,circle,inner sep=0pt,minimum size=4pt] at (1,1) {};
\node [label=below :$\Theta_6^1$,draw,circle,inner sep=0pt,minimum size=4pt] at (0,1) {};
\node [label=below:$\Theta_7^1$,draw,circle,inner sep=0pt,minimum size=4pt] at (-1,1) {};
\node [label=right:$\Theta_8^1$,draw,circle,inner sep=0pt,minimum size=4pt] at (1,2) {};

\path[every node/.style={font=\sffamily\small}]
     (0,1) edge (1,1)
     (1,1) edge (2,1)
     (2,1) edge (3,1)
     (3,1) edge (4,1)
     (4,1) edge (5,1)
     (5,1) edge (6,1)
     (1,1) edge (1,2)
     (6,0)  edge (6,1)
     (0,-1) edge (1,-1)
     (1,-1)	edge (2,-1)
     (2,-1)	edge (3,-1)
     (3,-1)	edge (4,-1)
     (4,-1) edge (5,-1)
     (5,-1) edge (6,-1)
     (6,-1) edge (6,0)
     (1,-1)	edge (1,-2)
     (0,1) edge (-1,1)
     (0,-1) edge (-1,-1) 
     ;
\draw (-1.5,0) -- (6.5,0)[dashed];
\path[<->,>=stealth',auto,every node/.style={font=\sffamily\small}](6.7,-0.5) edge[bend right] node [right]{$\tau_2$} (6.7,0.5);
\end{tikzpicture}
\caption{Reducible fibers and sections of the fibration $\mathcal{E}_{X_2}$ on $X_2$}
\label{diagram X2}
\end{figure}
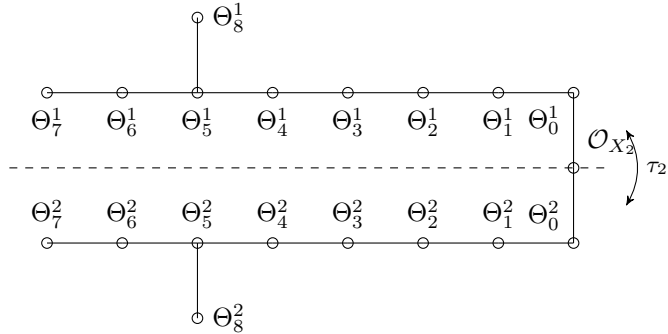 
\end{center}

\begin{proposition}\label{T for X2 X3 X4} The N\'eron--Severi group of $X_i$ has rank 18, signature $(1,17)$, for every $i=2,3,4$. 

Both lattices $\mathrm{NS}(X_4)$ and of $\mathrm{NS}(X_3)$ are isometric to $U\oplus D_8\oplus E_8$ and their transcendental lattices are both isometric to $U\oplus U(2)$, which has the same  discriminant group and form as $D_8$. In particular $X_3$ and $X_4$ lie in the same family of K3 surfaces, namely the family of $U\oplus D_8\oplus E_8$-polarized K3 surfaces.

The lattice $\mathrm{NS}(X_2)$ is isometric to $U\oplus E_8\oplus E_8$ and its transcendental lattice is isometric to $U\oplus U$, which has the same discriminant form of $E_8$.
\end{proposition}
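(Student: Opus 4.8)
The plan is to mirror the proof of Proposition~\ref{latticeA8}: first pin down the rank and signature from the geometry of the two reducible fibres and the dimension of the family, then compute the discriminant form of $\mathrm{NS}(X_i)$, and finally invoke Nikulin's uniqueness theorem to identify both $\mathrm{NS}(X_i)$ and its orthogonal complement in the K3 lattice $\Lambda:=U^{\oplus 3}\oplus E_8^{\oplus 2}$. For the rank, Figures~\ref{diagram X4}, \ref{diagram X3} and~\ref{diagram X2} exhibit the components of the reducible fibres together with the zero section (and, for $i=3,4$, the torsion section $T_1$) of $\mathcal{E}_{X_i}$; these give $18$ linearly independent classes, so $\mathrm{NS}(X_i)$ has rank at least $18$. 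Since the $X_i$ vary in a $2$-dimensional family (the two branch points move in $\mathbb{P}^1$), the transcendental lattice has rank at least $4$, whence the rank of $\mathrm{NS}(X_i)$ is at most $18$. Thus it is exactly $18$ and, by the Hodge index theorem, the signature is $(1,17)$, which proves the first assertion.

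For $X_2$ the trivial lattice $\mathrm{Triv}(X_2)=U\oplus E_8\oplus E_8$ — the hyperbolic plane spanned by $\mathcal{O}_{X_2}$ and a fibre, together with the two copies of $E_8$ spanned by the non-identity components of each $II^*$ fibre — is already unimodular of rank $18$. A unimodular sublattice of full rank admits no proper finite overlattice, so $\mathrm{NS}(X_2)=U\oplus E_8\oplus E_8$. As this lattice is unimodular and primitively embedded in the unimodular lattice $\Lambda$, its orthogonal complement $T_{X_2}$ is again even and unimodular of signature $(2,2)$, and the unique such lattice is $U\oplus U$.

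For $X_4$ and $X_3$ the trivial lattices are respectively $U\oplus D_8\oplus D_8$ (from the two $I_4^*$ fibres) and $U\oplus E_7\oplus E_7\oplus A_1\oplus A_1$ (from the two $III^*$ fibres and the two fibres of type $I_2$ or $III$); each has discriminant $16$. By the Shioda--Tate formula the $2$-torsion section $T_1$ realises $\mathrm{NS}(X_i)$ as an even overlattice of index $2$, and hence of discriminant $16/4=4$, matching that of $U\oplus D_8\oplus E_8$. The associated glue vector $\gamma$ is isotropic and its support records the fibre components met by $T_1$, namely $\Theta_8^1,\Theta_8^2$ for $X_4$ and $\Theta_6^1,\Theta_6^2,\Phi_1^1,\Phi_1^2$ for $X_3$. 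I would then compute the discriminant form of the resulting overlattice as $\gamma^\perp/\langle\gamma\rangle$ and check that in both cases the discriminant group is $(\Z/2\Z)^2$ with the even hyperbolic form $u$ — the same form as that of $D_8$ (and of $U(2)$). Since $U\oplus D_8\oplus E_8$ is even of signature $(1,17)$ with discriminant form $u$, and the rank $18$ exceeds the length $2$ of the discriminant group by more than $2$, Nikulin's theorem \cite[Theorem 1.13.2, Corollary 1.13.3]{NikulinIntQuadForms} yields uniqueness in the genus, giving $\mathrm{NS}(X_4)\cong\mathrm{NS}(X_3)\cong U\oplus D_8\oplus E_8$. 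Applying the same uniqueness statement to the orthogonal complements, which have signature $(2,2)$ and discriminant form $-u\cong u$ (here the rank $4$ equals the length plus $2$, which still suffices), identifies $T_{X_4}\cong T_{X_3}\cong U\oplus U(2)$; the coincidence of Néron--Severi lattices shows that $X_3$ and $X_4$ lie in the same family of $U\oplus D_8\oplus E_8$-polarized K3 surfaces.

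I expect the main obstacle to be the discriminant-form computation for the index-$2$ overlattices, above all for $X_3$, where one must correctly determine the isotropic glue vector of $T_1$ across the four summands $E_7\oplus E_7\oplus A_1\oplus A_1$ and verify that $\gamma^\perp/\langle\gamma\rangle$ produces the hyperbolic form $u$ rather than the elliptic form $v$. It is worth stressing that indefiniteness is essential: the negative-definite rank-$16$ parts alone do not determine the lattice — for instance $D_{16}$ and $D_8\oplus E_8$ share the discriminant form $u$ yet are not isometric — so the argument genuinely relies on the presence of the hyperbolic summand $U$, and hence on Nikulin's uniqueness theorem for indefinite lattices rather than on any identification of root systems.
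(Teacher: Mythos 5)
Your argument is correct and arrives at the same conclusions as the paper, but it organizes the lattice identification along a somewhat different route. The paper's proof proceeds by brute force: it takes the $18$ classes in Figures \ref{diagram X4}, \ref{diagram X3} and \ref{diagram X2}, extracts a basis, computes the Gram matrix explicitly, observes that the resulting lattices are $2$-elementary, and then identifies both $\mathrm{NS}(X_i)$ and $T_{X_i}$ via Nikulin's classification of \emph{indefinite even $2$-elementary} lattices by signature, length $a$ and the invariant $\delta$ (which vanishes in all cases here). You instead exploit the Shioda--Tate structure: $\mathrm{NS}(X_i)$ is the overlattice of the trivial lattice of index $|\mathrm{MW}(\mathcal{E}_{X_i})|$, so for $X_2$ the unimodularity of $U\oplus E_8^{\oplus 2}$ settles the whole statement at once, while for $X_3$ and $X_4$ the $2$-torsion section supplies an isotropic glue vector $\gamma$ and the discriminant form is read off as $\gamma^\perp/\langle\gamma\rangle$; uniqueness then comes from the general genus statement \cite[Theorem 1.13.2]{NikulinIntQuadForms} rather than from the $2$-elementary classification. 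The two routes are equivalent in strength, and both ultimately rest on Nikulin; yours is more structured (the role of the torsion section is explicit, the $X_2$ case is immediate, and one avoids an $18\times 18$ determinant), whereas the paper's is more uniform across the three surfaces but leaves the identification of the computed Gram matrix with $U\oplus D_8\oplus E_8$ implicit. The one step you defer --- checking that $\gamma^\perp/\langle\gamma\rangle$ is the hyperbolic form $u$ rather than $v$ --- does come out as claimed: for $D_8^{\oplus 2}$ the glue vector is a pair of spinor classes (each with $q=0$), and for $E_7^{\oplus 2}\oplus A_1^{\oplus 2}$ the four nontrivial classes have $q$-values $\tfrac12,\tfrac12,-\tfrac12,-\tfrac12$, so $\gamma$ is isotropic and in both cases the three nonzero classes of $\gamma^\perp/\langle\gamma\rangle$ carry the values $(0,0,1)$, i.e.\ the form $u$; this is a finite check on groups of order $16$ and constitutes no real gap. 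Your closing remark that $D_{16}$ and $D_8\oplus E_8$ lie in the same genus without being isometric is accurate and correctly pinpoints why the hyperbolic summand, i.e.\ indefiniteness, is indispensable for the uniqueness argument.
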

\begin{proof}
The curves in the Figures \ref{diagram X4}, \ref{diagram X3}, and \ref{diagram X2} (i.e. the curves $\Theta_l^j$, $\mathcal{O}_{X_i}$, $T_1$ if $i\neq 2$ and $\Phi_l^j$ if $i=3$) generate $\mathrm{NS}(X_i)$. They are not all linearly independent, but once one extract a basis, one obtains 18 independent generators of $\mathrm{NS}(X_i)$. Since one knows all the intersection properties of these generators, one can explicitly compute their intersection matrix. This identifies the lattice $\mathrm{NS}(X_{i})$ and in particular its discriminant group and form. We observe that all the lattices that appear are 2-elementary, i.e., the discriminant group is $\left(\Z/2\Z\right)^a$, $a\in\N$. So the transcendental lattice is a 2-elementary lattice with signature $(2,2)$. The indefinite 2-elementary lattices are completely determined by their length, i.e., by $a$, and by another invariant, often denoted by $\delta$, which is zero in all the cases considered. This allows us to identify the transcendental lattices. 
\end{proof}

\subsection{Classification of all the possible fibrations on the \texorpdfstring{$K3$}{K3} surfaces \texorpdfstring{$X_4$}{X4}, \texorpdfstring{$X_3$}{X3}, and \texorpdfstring{$X_2$}{X2}}\label{sec: classification X2 X3 X4}\hspace{0pt}

\vspace{5pt}

In the same way as we did for $X_9$ in Section \ref{sec: classification of the fibrations of X9}, we classify elliptic fibrations on the surfaces $X_4\simeq X_3$ and $X_2$  in what follows. 
By Proposition \ref{T for X2 X3 X4} we take $T=D_8$ for $X_3\simeq X_4$, and $T=E_8$ for $X_2$ and apply Nishiyama's method. 
By \cite[Lemmas 4.1 and 4.3]{Nish} we know that $D_8$ only embeds primitively in $D_n$ for $n\geq8$, and $E_8$ only embeds primitively in $E_8$. The orthogonal complements of these embeddings in the 24 Niemeier lattices are then found in \cite[Corollary 4.4]{Nish}.
Those results are summarized in tables \ref{eq: table elliptic fibrations on X3 X4} and \ref{eq: table elliptic fibrations on X2}. 
We notice that the fibrations on $X_2$, $X_3$ and $X_4$ were already classified in \cite[Table 2, case $k=8$ and Table 1 case $k=8,\delta=0$]{GS2} via different methods.

\begin{table}[h!]
	\begin{adjustwidth}{+0cm}{}		
		\begin{tabular}{c|c|c|c|c|c}
			$n^o$&Niemeier&embedding&roots orth.&reducible fibers&MW\\
			1& $E_8\oplus D_{16}$&$D_8 \subset D_{16}$&  $E_8 \oplus  D_8$ &$II^*+ I_{4}^*$& $\{\mathcal{O}\}$\\
			2&$E_7^{\oplus2}\oplus D_{10}$&$D_8 \subset D_{10}$&$E_7^{\oplus2}\oplus A_1^{\oplus2}$& $2III^*+2I_2$&$\mathbb{Z}/2 \mathbb{Z}$\\
			3&$D_{24}$ &$D_8 \subset D_{24}$&$D_{16}$&$I_{12}^*$&$\{\mathcal{O}\}$\\
			4&$D_{12}^{\oplus2}$&$D_8 \subset D_{12}$&$D_{12}\oplus D_4$&$I_{8}^*+I_0^*$&$\mathbb{Z}/2 \mathbb{Z}$\\
			5&$D_{8}^{\oplus3}$&$D_8 \subset D_{8}$&$  D_8^{\oplus2}$&$2I_4^*$&$\mathbb{Z}/2 \mathbb{Z}$\\
			6&$D_{9} \oplus A_{15}$&$D_8 \subset D_{9}$&$A_{15}$&$I_{16}$&$\mathbb{Z}/2 \mathbb{Z}\oplus \mathbb{Z}$ 
		\end{tabular}
		\caption{Elliptic fibrations of $X_3$ and $X_4$}
		\label{eq: table elliptic fibrations on X3 X4} 
	\end{adjustwidth}
\end{table}

\begin{table}[h!]
	\begin{adjustwidth}{+0cm}{}
		\begin{tabular}{c|c|c|c|c|c}
		    $n^o$&Niemeier&embedding&roots orth.&reducible fibers&MW\\
			1& $E_8^{\oplus3}$ & $E_8\subset E_8$ & $E_8^{\oplus2}$ & $2II^*$ & $\{\mathcal{O}\}$\\
			2& $E_8\oplus D_{16}$&$E_8 \subset E_{8}$&  $D_{16}$ & $I_{12}^*$& $\Z/2\Z$ 
		\end{tabular}
		\caption{Elliptic fibrations of $X_2$}
		\label{eq: table elliptic fibrations on X2}
\end{adjustwidth}
\end{table}

\subsection{Determining the type of each fibration of \texorpdfstring{$X_4$}{X4}, \texorpdfstring{$X_3$}{X3}, and \texorpdfstring{$X_2$}{X2}}\hspace{0pt}

\vspace{5pt}

As in Section \ref{sec:type fibrations X9} we  determine the type of each fibration obtained in Section \ref{sec: classification X2 X3 X4} (Tables \ref{eq: table elliptic fibrations on X3 X4} and \ref{eq: table elliptic fibrations on X2}) with respect to the cover involutions $\tau_i$, for $i=4,3,2$. We determine moreover the sections and their fields of definition. This study allows us to obtain an upper bound for the degree over $k$ of a field of definition $k_{\eta}$ of a given fibration $\eta$, and an upper bound for the degree over $k$ of a field of definition $k_{\eta, \mathrm{MW}}$ of a set of generators of the Mordell--Weil group of the fibration.

By Proposition \ref{prop: Galois R2, R3 and R4} we know that the Galois group $G_{R_i}$ is trivial and all the fiber components of $R_i$ are defined over $k$, for $i=4,3,2$. 
In order to determine the field of definition of the sections, the only action that is taken into account is the one of the cover involutions $\tau_i$, for $i=4,3,2$.

To determine the type of each fibration in Table \ref{eq: table elliptic fibrations on X3 X4} (resp. Table \ref{eq: table elliptic fibrations on X2}) with respect to $\tau_4$ (resp. $\tau_3$ and $\tau_2$), we find a configuration of (parts of the) reducible fibers in terms of the curves in Figure~\ref{diagram X4} (resp. Figure \ref{diagram X3} and Figure \ref{diagram X2}). The fibration in line 5 (resp. line 2 and line 1) is represented in Figure \ref{diagram X4} (resp. Figure \ref{diagram X3} and Figure \ref{diagram X2}). 

The configurations associated to the fibers in lines 1, 2, 3, 4 and 6 in Table \ref{eq: table elliptic fibrations on X3 X4} for the K3 surface $X_4$ are listed below:
\vspace{5pt}

\noindent$\underline{II^*+I_4^*}$\\
$\Theta_0^1,2\Theta_2^1,3\Theta_3^1,4\Theta_4^1,5\Theta_5^1,6\Theta_6^1,4\Theta_8^1,2T_1,3\Theta_7^1\;\;+\;\;\Theta_0^2,\Theta_1^2,2\Theta_2^2,2\Theta_3^2,2\Theta_4^2,2\Theta_5^2,2\Theta_6^2,\Theta_7^2,$

\vspace{5pt}

\noindent$\underline{2III^*+2I_2}$\\
$\Theta_3^1,2\Theta_4^1,3\Theta_5^1,4\Theta_6^1,3\Theta_8^1,2T_1,\Theta_8^2,2\Theta_7^1\;\;+\;\;\Theta_0^1,2\mathcal{O}_{X_4},3\Theta_0^2,4\Theta_2^2,3\Theta_3^2,2\Theta_4^2,\Theta_5^2,2\Theta_1^2\;\;+\;\;\Theta_1^1\;\;+\;\;\Theta_7^2,$

\vspace{5pt}
\noindent$\underline{I_{12}^*}$\\
$\Theta_7^1,\Theta_8^1,2\Theta_6^1,2\Theta_5^1,2\Theta_4^1,2\Theta_3^1,2\Theta_2^1,2\Theta_0^1,2\mathcal{O}_{X_4},2\Theta_0^2,2\Theta_2^2,2\Theta_3^2,2\Theta_4^2,2\Theta_5^2,2\Theta_6^2,\Theta_8^2,\Theta_7^2,$

\vspace{5pt}
\noindent$\underline{I_8^*+I_0^*}$ \\$\Theta_8^1,\Theta_7^1,2\Theta_6^1,2\Theta_5^1,2\Theta_4^1,2\Theta_3^1,2
\Theta_2^1,2\Theta_0^1,2\mathcal{O}_{X_4},2\Theta_0^2,2\Theta_2^2,\Theta_1^2,\Theta_3^1\;\;+\;\;\Theta_8^2,\Theta_7^2,2\Theta_6^2,\Theta_5^2,$

\vspace{5pt}
\noindent$\underline{I_{16}}$ \\$\Theta_8^1,\Theta_6^1,\Theta_5^1,\Theta_4^1,\Theta_3^1,
\Theta_2^1,\Theta_0^1,\mathcal{O}_{X_4},\Theta_0^2,\Theta_2^2,\Theta_3^2,\Theta_4^2,\Theta_5^2,\Theta_6^2,\Theta_8^2, T_1.$
\vspace{5pt}

The configurations associated to the fibers in lines 1, 3, 4, 5 and 6 in Table \ref{eq: table elliptic fibrations on X3 X4} for the K3 surface $X_3$ are listed below:

\vspace{5pt}
\noindent$\underline{II^*+I_4^*}$\\
$\Theta_0^2,2\mathcal{O}_{X_3},3\Theta_0^1,4\Theta_1^1,5\Theta_2^1,6\Theta_3^1,4\Theta_4^1,
2\Theta_5^1,3\Theta_7^1$

\hspace{120pt} $+\;\;\Phi_2^1,\Phi_2^2,2T_1,2\Theta_6^2,2\Theta_5^2,2\Theta_4^2,2\Theta_3^2,\Theta_7^2,\Theta_2^2,$

\vspace{5pt}

\noindent$\underline{I_{12}^*}$\\
$\Phi_1^1,\Phi_1^2,2\mathcal{O}_{X_3},2\Theta_0^1,2\Theta_1^1,2\Theta_2^1,2\Theta_3^1,2\Theta_4^1,2\Theta_5^1,2\Theta_6^1,2T_1,2\Theta_6^2,
2\Theta_5^2,2\Theta_4^2,2\Theta_3^2,\Theta_2^2,\Theta_7^2,$

\vspace{5pt}

\noindent$\underline{I_8^*+I_0^*}$\\
$\Theta_2^1,\Theta_7^1,2\Theta_3^1,2\Theta_4^1,2\Theta_5^1,2\Theta_6^1,2
T_1,2\Theta_6^2,2\Theta_5^2,2\Theta_4^2,2\Theta_3^2,\Theta_2^2,\Theta_7^2\;+\;\Theta_0^2,\Theta_0^1,2\mathcal{O}_{X_3},\Phi_1^1,\Phi_1^2,$ 

\vspace{5pt}

\noindent$\underline{2I_4^*}$\\
$\Phi_1^1,\Phi_1^2,2\mathcal{O}_{X_3},2\Theta_0^1,2\Theta_1^1,2\Theta_2^1,2\Theta_3^1,\Theta_4^1,\Theta_7^1\;\;+\;\;\Theta_2^2,\Theta_7^2,2\Theta_3^2,2\Theta_4^2,2\Theta_5^2,2\Theta_6^2,2T_1,\Theta_6^1,$

\vspace{5pt}

\noindent$\underline{I_{16}}$\\
$\Theta_3^1,\Theta_4^1,\Theta_5^1,\Theta_6^1,
T_1,\Theta_6^2,\Theta_5^2,\Theta_4^2,\Theta_3^2,\Theta_2^2,\Theta_1^2, \Theta_0^2,\mathcal{O}_{X_3},\Theta_0^1,\Theta_1^1, \Theta_2^1.$ 

\vspace{5pt}

Finally, in terms of the configuration of $2II^*$ we find a fiber of type $I_{12}^*$ representing the fibration in line 2 of Table \ref{eq: table elliptic fibrations on X2} by including every curve in Figure \ref{diagram X2} except $\Theta_7^1,\Theta_7^2$; The latter are sections for this fibration. 

\vspace{11pt}

Note that all the reducible fibers listed above only appear once in Table \ref{eq: table elliptic fibrations on X3 X4} (resp. Table \ref{eq: table elliptic fibrations on X2}), hence we know that they represent the corresponding fibrations in those tables. Therefore, using these configurations, we can determine the type of the corresponding fibration with respect to $\tau_4$ (resp. $\tau_3$ and $\tau_2$), and find sections for the corresponding fibration. By choosing a 0-section, we determine whether the different sections are fixed by $\tau_4$ (resp. $\tau_3$ and $\tau_2$) or not. The results are listed in Table \ref{eq: table types elliptic fibrations on X4} (resp. Table \ref{eq: table types elliptic fibrations on X3} and Table \ref{eq: table types elliptic fibrations on X2}). 

\begin{table}[h!]
		\begin{tabular}{c|c|c|c|c|c|c}
$\mbox{n}^o$&roots orth.&type&sections&\makecell{field of \\def. \\0-section}&\makecell{field of\\def. all\\sections}&$[k_{\eta, \mathrm{MW}}:k]$\\
			1& $E_8\oplus D_8$ & 3 & $\mathcal{O}_{X_4}$ & $\mathcal{O}_{X_4}/k$ & $k$ & 1\\
			2 & $E_7^{\oplus 2} \oplus A_1^{\oplus 2}$ & 3 & $\Theta_2^1,\Theta_6^2$ & $\Theta_2^1/k_{\tau_4}$ & $k_{\tau_4}$ & $\leq2$ \\ 
			3 & $D_{16}$ & 1 & $T_1$ & $T_1/k$ & $k$ & 1 \\
			4 & $D_{12} \oplus D_4$ & 3 & $T_1,\Theta_4^2$ & $T_1/k$ & $k_{\tau_4}$ & $\leq2$ \\
			5 & $D_8^{\oplus 2}$ & 2 & $\mathcal{O}_{X_4},T_1$ & $\mathcal{O}_{X_4}/k$ & $k$ & 1 \\
			6 & $A_{15}$ & 1 & $\Theta_1^1,\Theta_7^1, \Theta_1^2$ & $\Theta_1^1/k_{\tau_4}$ & $k_{\tau_4}$& $\leq 2$ \\
		\end{tabular}
		\caption{Types of the different elliptic fibrations of $X_4$ with respect to $\tau_4$ and fields of definition}
		\label{eq: table types elliptic fibrations on X4}
\end{table}

\begin{table}[h!]
		\begin{tabular}{c|c|c|c|c|c|c}
			$\mbox{n}^o$&roots orth.&type&sections&\makecell{field of\\ def. \\0-section}&\makecell{field of\\def. all\\sections}&$[k_{\eta, \mathrm{MW}}:k]$\\
			1& $E_8\oplus D_8$ & 3 & $\Theta_1^2$ & $\Theta_1^2/k_{\tau_3}$ & $k_{\tau_3}$ & $\leq 2$\\
			2 & $E_7^{\oplus 2} \oplus A_1^{\oplus 2}$ & 2 & $\mathcal{O}_{X_3}, T_1$ & $\mathcal{O}_{X_3}/k$&$k$&1\\ 
			3 & $D_{16}$ & 3 & $\Theta_1^2$ & $\Theta_1^2/k_{\tau_3}$ &$k_{\tau_3}$& $\leq 2$ \\
			4 & $D_{12} \oplus D_4$ & 1 & $\Theta_1^1, \Theta_1^2$ & $\Theta_1^2/k_{\tau_3}$ & $k_{\tau_3}$ & $\leq2$ \\
			5 & $D_8^{\oplus 2}$ & 3 & $\Theta_5^1, \Theta_1^2$ & $\Theta_5^1/k_{\tau_3}$ & $k_{\tau_3}$&$\leq 2$ \\
			6 & $A_{15}$ & 1 & $\Theta_7^1, \Theta_7^2, \Phi_1^1, $ & $\Theta_7^1/k_{\tau_3}$ & $k_{\tau_3}$& $\leq2$ 
		\end{tabular}
		\caption{Types of the different elliptic fibrations of $X_3$ with respect to $\tau_3$ and fields of definition}
		\label{eq: table types elliptic fibrations on X3}
\end{table}

\begin{table}[h!]
		\begin{tabular}{c|c|c|c|c|c|c}
			$\mbox{n}^o$&roots orth.&type&sections&\makecell{field of\\ def. \\0-section}&\makecell{field of\\def. all\\sections}&$[k_{\eta, \mathrm{MW}}:k]$\\
			1& $E_8^{\oplus 2}$ &2 & $\mathcal{O}$ & $\mathcal{O}/k$ & $k$ & 1\\
			2 & $D_{16}$ & 1& $\Theta_7^1,\Theta_7^2$ & $\Theta_7^1/k_{\tau_2}$ & $k_{\tau_2}$ & $\leq2$    \\ 
		\end{tabular}
		\caption{Types of the different elliptic fibrations of $X_2$ with respect to $\tau_2$ and fields of definition}
		\label{eq: table types elliptic fibrations on X2}
\end{table}

\vspace{1cm}

\paragraph{\bf Acknowledgements}  The authors would like to thank the organizing committee of the third \textit{Women in Numbers Europe} workshop, where this project started. They would also like to express gratitude to the referees for their careful reading and suggestions that improved the paper.
Victoria Cantoral-Farf\'an was partially supported by KU Leuven IF C14/17/083.
 Cec\'ilia Salgado was partially supported by FAPERJ grant E-26/203.205/2016,  the Serrapilheira Institute (grant Serra-1709-17759), Cnpq grant PQ2 310070/2017-1 and the Capes-Humboldt program.
Antolena Trbovi\'c was supported by the QuantiXLie Centre of Excellence, a project co-financed by the Croatian
Government and European Union through the European Regional Development Fund - the Competitiveness and
Cohesion Operational Programme (Grant KK.01.1.1.01.0004).

\end{document}